\newif\ifarxiv
\newtheorem{theorem}{Theorem}
\newtheorem{lemma}{Lemma}
\newtheorem{proposition}{Proposition}
\newcommand{\R}{\mathbb{R}}
\newcommand{\p}{\mathbb{P}}
\newcommand{\eps}{\varepsilon}
\newcommand{\ah}[1]{\langle #1\rangle}
\newcommand{\Eorth}{E^{\perp}}
\DeclareMathOperator{\grad}{grad}
\newenvironment{denseitems}{\list{$\bullet$}%
  {\labelwidth3em\itemsep0pt\parsep0pt\topsep0.6ex}}{\endlist} 
\newenvironment{densedescription}
               {\list{}{\labelwidth0pt \itemindent-\leftmargin
                   \itemsep0pt\parsep0pt\topsep0.6ex%
                   }}
               {\endlist}
\newcommand{\LINES}{\mathfrak{L}}
\newcommand{\FAM}{\mathcal{F}}
\newcommand{\MANI}{\mathfrak{M}}
\newcommand{\FU}{\zeta}
\newcommand{\G}{\mathcal{G}}
\newcommand{\VOL}{\mathcal{U}}
\newcommand{\NORM}{\eta}
\newcommand{\BOLOID}{\mathcal{B}}
\newcommand{\gORTH}{g^{\perp}}
\newcommand{\FAMO}{\FAM^{\perp}}
\newcommand{\GO}{\mathcal{G}^{\perp}}
\newcommand{\pE}{D}
\newcommand{\VOLV}{\bar{\VOL}}
\newcommand{\conv}{\mathit{conv}}
\newcommand{\iiip}{3$^{\scriptscriptstyle\parallel}$}
\newcommand{\iiix}{3$^{\scriptscriptstyle\times}$}
\newcommand{\ivp}{4$^{\scriptscriptstyle\parallel}$}
\newcommand{\ivx}{4$^{\scriptscriptstyle\times}$}
\let\geq\geqslant
\let\leq\leqslant
\let\setminus\smallsetminus
\let\bd\partial
\renewcommand{\showkeyslabelformat}[1]{\normalfont\tiny\ttfamily#1}
\def\section{\@startsection {section}{1}{\z@}{-3.5ex plus -1ex minus
-.2ex}{2.3ex plus .2ex}{\large\bf}}
\def\subsection{\@startsection{subsection}{2}{\z@}{-3.25ex plus -1ex
minus -.2ex}{1.5ex plus .2ex}{\normalsize\bf}}
\def\@fnsymbol#1{\ensuremath{\ifcase#1\or *\or 1\or 2\or
    3\or 4\or 5\or 6\or 7 \or 8\ or 9 \or 10\or 11 \else\@ctrerr\fi}}
\begin{document}

\title{Lines pinning lines%
  \thanks{%%
    B.A.\ was partially supported by a grant from the U.S.-Israel
    Binational Science Foundation, by NSA MSP Grant H98230-06-1-0016,
    and by NSF Grant~CCF-08-30691.
    Part of the work of B.A.\ on this paper was carried out while
    visiting KAIST in the summer of 2008, supported by the Brain Korea
    21 Project, the School of Information Technology, KAIST, in 2008.
    The cooperation by O.C.\ and X.G.\ was supported by the INRIA
    \emph{\'Equipe Associ\'ee}~KI.  
    O.C.\ was supported by the Korea Science and Engineering
    Foundation Grant~R01-2008-000-11607-0 funded by the Korean
    government.}}

\author{Boris Aronov%
  \thanks{Department of Computer Science and Engineering, Polytechnic
    Institute of NYU, Brooklyn, New York, USA.
    aronov@poly.edu}
  \and
  Otfried Cheong%
  \thanks{Department of Computer Science, KAIST, Daejeon, Korea.
    otfried@kaist.edu}
  \and
  Xavier Goaoc%
  \thanks{LORIA - INRIA Nancy Grand Est, Nancy, France. goaoc@loria.fr}
  \and
  G\"unter Rote%
  \thanks{Freie Universit\"at Berlin, Institut f\"ur Informatik, 
  %% Takustra\ss e 9, 14195 -- for consistency: city only
  Berlin, Germany. rote@inf.fu-berlin.de}
}

\ifarxiv
\date{}
\fi

\maketitle

\begin{abstract}
  A line $\ell$ is a \emph{transversal} to a family $\FAM$ of convex
  polytopes in~$\R^3$ if it intersects every member of~$\FAM$. If, in
  addition, $\ell$ is an isolated point of the space of line
  transversals to $\FAM$, we say that $\FAM$ is a \emph{pinning}
  of~$\ell$.  We show that any minimal pinning of a line by polytopes
  in $\R^3$ such that no face of a polytope is coplanar with the line
  has size at most eight.  If in addition the polytopes are pairwise
  disjoint, then it has size at most six.  
\end{abstract}

\section{Introduction}

A \emph{line transversal} to a family $\FAM$ of disjoint compact
convex objects is a line that meets every object of the family.
Starting with the classic work of Gr\"unbaum, Hadwiger, and Danzer in
the 1950's, \emph{geometric transversal theory} studies properties of
line transversals and conditions for their existence.  There is a
sizable body of literature on line transversals in two dimensions.  Of
particular interest are so-called ``Helly-type'' theorems, such as the
following theorem already conjectured by Gr\"unbaum in 1958, but
proven in this form by Tverberg only in~1989: \emph{If $\FAM$ is a
  family of at least five disjoint translates of a compact convex
  figure in the plane such that every subfamily of size five has a
  line transversal, then $\FAM$ has a line transversal.}  More
background on geometric transversal theory can be found in the
classic survey of Danzer et al.~\cite{dgk-htr-63}, or in the more
recent ones by Goodman et al.~\cite{gpw-gtt-93},
Eckhoff~\cite{e-hrctt-93}, Wenger~\cite{w-httgt-04}, or
Holmsen~\cite{h-rpltfto-08}.

Much less is known about line transversals in three dimensions. Cheong
et al.~\cite{cghp-hhtdus-08} showed a Helly-type theorem for pairwise
disjoint congruent Euclidean balls, Borcea et al.~\cite{bgp-ltdb-08}
generalized this to 
families of arbitrary disjoint balls (with an additional ordering
condition).  On the other hand, Holmsen and Matou{\v
s}ek~\cite{hm-nhtst-04} proved that no such theorem can exist for
convex polytopes: For every $n > 2$ they construct a convex
polytope~$K$ and a family~$\FAM$ of $n$ disjoint translates of~$K$
such that $\FAM$ has no line transversal, but $\FAM \setminus \{F\}$
has a transversal for every $F \in \FAM$.

If a line transversal $\ell$ to a family $\FAM$ cannot move without
missing some~$F\in \FAM$, then we call the
line~$\ell$ \emph{pinned} by~$\FAM$.
% Here we consider small continuous
% movements of $\ell$ in the vicinity of its current position,
% obviously excluding translations parallel to itself.
Here we consider small continuous
movements of $\ell$ in the vicinity of its current position.
(We obviously exclude a translation of $\ell$ parallel to itself,
which moves the points on the line but does not change the line
as a whole.)
In other words, $\ell$ is
pinned if $\ell$ is an isolated point in the space of line
transversals to~$\FAM$.
We will define an appropriate space $\LINES$ of lines in Section~\ref{sec:prel}.
\begin{wrapfigure}[6]{l}{4.5cm}
  \centerline{\includegraphics{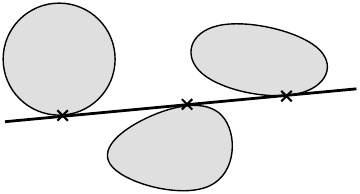}}
\end{wrapfigure}
The line shown on the left is pinned by the three convex figures.
Whenever a family $\FAM$ of convex objects in the plane pins a
line~$\ell$, there are three objects in $\FAM$ that already
pin~$\ell$; this fact was already used by Hadwiger~\cite{h-uegt-57}.
Or, put differently, any \emph{minimal pinning} of a line by convex
figures in the plane is of size three.  Here, a \emph{minimal pinning}
is a family $\FAM$ pinning a line~$\ell$ such that no proper subset
of~$\FAM$ pins~$\ell$.  The Helly-type theorems for balls in three
dimensions are based on a similar result: any minimal pinning of a
line by pairwise disjoint balls in $\R^{3}$ has size at most
five~\cite{bgp-ltdb-08}.

In this paper we prove the analogous result for convex
polytopes, with a restriction:
\begin{theorem}\label{thm:finite}
  Any minimal pinning of a line by possibly intersecting convex
  polytopes in~$\R^3$, no facet of which is coplanar with the line,
  has size at most eight.  The number reduces to six if the polytopes
  are pairwise disjoint.
\end{theorem}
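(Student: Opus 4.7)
The plan is to linearize the pinning condition in the $4$-dimensional tangent space $T$ of the line space $\LINES$ at $\ell$, encode each polytope's contact with $\ell$ as a convex cone in $T$, and then invoke the classical bound that a minimally positively spanning set in $\R^{4}$ has at most $2\cdot 4 = 8$ elements. As a first step, I observe that in a minimal pinning every $P_{i}\in\FAM$ must touch $\ell$ at a single point $p_{i}$: if $\ell\cap P_{i}$ had positive length, then by continuity any sufficiently small perturbation of $\ell$ would still meet $P_{i}$, so $P_{i}$ could be dropped, contradicting minimality. By the no-coplanarity hypothesis, $p_{i}$ must lie either in the relative interior of an edge $e_{i}$ of $P_{i}$ or at a vertex $v_{i}$ of $P_{i}$ (coplanarity would be forced if $p_{i}$ were in the relative interior of a facet, or if $\ell$ contained an edge).

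For each contact I build a feasibility cone $C_{i}\subseteq T$ consisting of first-order motion vectors that preserve transversality to $P_{i}$. An edge contact produces a closed half-space, whose bounding hyperplane is the linearization of the coplanarity condition ``$\ell$ meets the supporting line of $e_{i}$''. A vertex contact produces a closed convex cone with apex at $0$, realizable as the intersection of finitely many half-spaces, one for each edge of $P_{i}$ incident to $v_{i}$ that is extremal in the projection onto a plane transverse to $\ell$. In either case the pinning of $\ell$ is equivalent to $\bigcap_{i} C_{i}=\{0\}$.

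Next I collect the inward normals of the bounding half-spaces of all the $C_{i}$ into a multiset $N\subseteq T^{*}\cong\R^{4}$. Polar duality gives $\bigcap_{i}C_{i}=\{0\}$ if and only if $N$ positively spans $\R^{4}$. Let $N'\subseteq N$ be a minimally positively spanning subset; the classical $2d$-bound gives $|N'|\le 8$. By the polytope-level minimality of $\FAM$, each $P_{i}$ must contribute at least one normal to $N'$: if none of $P_{i}$'s normals lay in $N'$, then $N'$ would already be a positively spanning collection of normals from $\FAM\setminus\{P_{i}\}$, so the reduced family would still pin $\ell$, contradicting minimality. Hence $|\FAM|\le|N'|\le 8$, proving the general bound.

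For the disjoint case, the contact points $p_{1},\ldots,p_{n}$ lie on $\ell$ in a strict linear order, and each contact can be labeled by the side from which $P_{i}$ is approached by $\ell$. The plan is to exploit this order to identify a two-dimensional quotient of $T$ (roughly, the ``rotation about axes transverse to $\ell$'' factor) on which the signed normals of disjoint contacts must vary order-monotonically, lowering the effective positive-spanning dimension by two and yielding $|\FAM|\le 6$. The main obstacle I anticipate is making this reduction precise in the presence of vertex contacts: each vertex contributes several bounding half-spaces whose normals need to be tracked simultaneously with the order on $\ell$, and the refined combinatorial analysis of how these combine to a minimally positively spanning set within the restricted configuration is where the real work lies.
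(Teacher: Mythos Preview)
Your central claim---that pinning of $\ell$ is equivalent to $\bigcap_i C_i=\{0\}$ for the first-order cones $C_i$---is false, and this is precisely the difficulty the paper has to work around. The set of lines near $\ell$ meeting a polytope through an edge $g=g(\lambda,\alpha,\delta)$ is $\VOL_g=\{u\in\R^4:\delta(u_2u_3-u_1u_4)+\NORM_g\cdot u\le 0\}$, bounded by a quadric whenever $\delta\ne 0$. Your half-space $C_i$ is only the tangent half-space $\{\NORM_g\cdot u\le 0\}$, which amounts to replacing $g$ by its orthogonal projection~$\gORTH$. One direction of your equivalence does hold (Lemma~\ref{lem:generic-linearization}: if $\FAMO$ pins then $\FAM$ pins), but the converse fails: Section~\ref{sec:minpin} constructs an explicit family of four constraints that pins $\ell_0$ while the orthogonalized family $\FAMO$ forms a \ivp-block and does not pin. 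For such a higher-order pinning your set $N$ of normals does not positively span $\R^4$ at all, and your Steinitz argument never gets off the ground.

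The paper's remedy is to lift to $\R^5$ via $\psi(u)=(u_1,\dots,u_4,u_2u_3-u_1u_4)$, where each $\VOL_g$ becomes the intersection of the quadric $\MANI=\psi(\R^4)$ with a genuine half-space $\VOLV_g$; pinning is then ``the origin is isolated in $C\cap\MANI$'' for the cone $C=\bigcap_g\VOLV_g$. The Isolation Lemma (Lemma~\ref{lem:isolation}) characterizes this geometrically, and Lemma~\ref{lem:general-eight} extracts eight constraints by a case split on $\dim\ah{C\cap T}$. For the bound of six the paper does not use an ordering argument of the kind you sketch; rather, disjointness of the polytopes rules out degenerate pairs of constraints (two constraints simultaneously coplanar and concurrent with $\ell_0$), and then the second statement of Theorem~\ref{thm:finite-lines}---proved via the orthogonal classification (Theorem~\ref{thm:char-ortho}, where absence of $2$-blocks forces size at most six) together with Theorem~\ref{thm:4-pinning} for the higher-order case---gives the result.
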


In the light of the construction by Holmsen and Matou{\v s}ek, which
shows that there can be no Helly-type theorem for line transversals to convex
polytopes, Theorem~\ref{thm:finite} is perhaps surprising.
Any Helly-type theorem can be considered as a guarantee for the
existence of a small certificate: Tverberg's theorem, for instance,
says that whenever a family of translates in the plane does not admit
a line transversal, we can certify this fact by exhibiting only five
objects from the family that already do not admit a line
transversal. (This interpretation of Helly-type theorems is the basis
for their relation to \emph{LP-type problems}~\cite{a-httgl-94}.)
Holmsen and Matou{\v s}ek's construction, on the other hand, shows
that there are families $\FAM$ of translates of a convex polytope that
do not admit a line transversal, but such that it is \emph{impossible}
to certify this fact by a small subfamily of~$\FAM$.  Now,
Theorem~\ref{thm:finite} can be interpreted as follows: Whenever there
is a family $\FAM$ of convex polytopes and a line $\ell$ (with the
non-coplanarity restriction) such that there is a neighborhood of~$\ell$
that contains no other line transversal to~$\FAM$, then there is a
subfamily $\G \subset \FAM$ of size at most eight that admits no other
line transversal in a neighborhood of~$\ell$.  So, while the absence
of a line transversal \emph{globally} cannot be witnessed by a small
certificate, \emph{locally} this is the case.

\begin{wrapfigure}[10]{l}{4.5cm}
  \centerline{\includegraphics{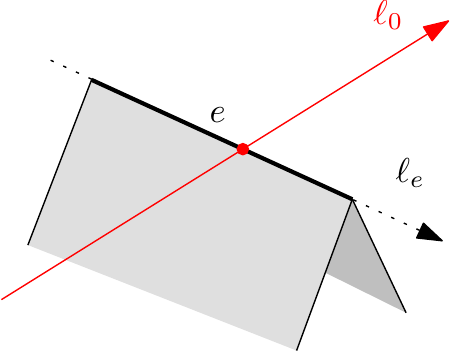}}
\end{wrapfigure}
Consider a family $\FAM$ of convex polytopes pinning a
line~$\ell_{0}$.  A polytope whose interior intersects~$\ell_0$ cannot
contribute to a minimal pinning, so we can assume that each element
of~$\FAM$ is tangent to~$\ell_{0}$. The simplest case is when~$\ell_0$
intersects a polytope~$F$ in a single point interior to an edge~$e$
of~$F$ (see the figure on the left). In that case, the condition that
a line~$\ell$ near~$\ell_0$ intersects~$F$ is characterized by the
``side'' (a notion we formalize in Section~\ref{sec:prel}) on
which~$\ell$ passes the oriented line~$\ell_e$ supporting the
edge~$e$. Since pinning is a local property, it follows that we can
ignore the polytopes that constitute the family~$\FAM$, and speak only
about the lines supporting their relevant edges.  We say that a family
$\FAM$ of \emph{oriented lines} in~$\R^{3}$ pins a line $\ell_{0}$ if
there is a neighborhood of~$\ell_{0}$ such that for any oriented line
$\ell \neq \ell_{0}$ in this neighborhood, $\ell$ passes on the left
side of some line in~$\FAM$.  We will prove the following theorem:
\begin{theorem}
  \label{thm:finite-lines}
  Any minimal pinning of a line~$\ell_{0}$ by
  lines in~$\R^3$ has size at most eight.  If no two of the lines are
  simultaneously coplanar and concurrent with~$\ell_{0}$, then it has
  size at most~six.
\end{theorem}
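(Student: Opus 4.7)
The plan is to reduce the pinning condition to a linear statement in the tangent space to $\LINES$ at $\ell_0$, and then invoke the classical theory of positive bases. The space $\LINES$ of (oriented) lines in $\R^3$ is a $4$-dimensional manifold, so I would begin by fixing a smooth local parametrization $\phi : U \to \LINES$ with $U$ a neighborhood of the origin in $\R^4$ and $\phi(0) = \ell_0$. For each oriented line $\ell_i \in \FAM$, the side along which $\phi(x)$ passes $\ell_i$ is governed by the sign of a smooth function $s_i(x)$, concretely given by the $4\times 4$ Pl\"ucker determinant in the Pl\"ucker coordinates of $\phi(x)$ and $\ell_i$. Since $\ell_0$ meets $\ell_i$ we have $s_i(0)=0$; set $v_i := \nabla s_i(0) \in \R^4$. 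A short calculation shows $v_i\neq 0$ whenever $\ell_i\neq\ell_0$.

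Next, I would prove that $\FAM$ pins $\ell_0$ if and only if the vectors $v_1,\dots,v_n$ \emph{positively span} $\R^4$. The ``only if'' direction is a first-order Taylor argument: if $\sum \lambda_i v_i = 0$ nontrivially with $\lambda_i \geq 0$, then a direction $x \in \R^4$ annihilating all $\langle v_i,\cdot\rangle$ from above yields a first-order curve of transversals escaping $\ell_0$. For the converse one has to rule out higher-order pinning, which is where minimality is used: if the $v_i$ positively span $\R^4$ but their positive span is redundant at some index, the corresponding $\ell_i$ can be dropped, contradicting minimality. Under this correspondence, \emph{minimal} pinnings match \emph{minimal} positive spanning sets, i.e., positive bases.

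The classical theorem of Davis states that a positive basis of $\R^d$ has at most $2d$ elements, which immediately gives $n \leq 8$ and proves the first part of the theorem. For the second part, a direct Pl\"ucker-determinant computation establishes that two lines $\ell_i,\ell_j\in\FAM$ are simultaneously coplanar and concurrent with $\ell_0$ if and only if the gradients $v_i,v_j\in\R^4$ are negatively proportional (i.e., antipodal up to a positive scalar). Consequently the additional hypothesis excludes antipodal pairs among the $v_i$, and the bound $n \leq 6$ reduces to the combinatorial fact that a positive basis of $\R^4$ with no antipodal pair has at most six elements, which can be proved by a short case analysis refining Davis's theorem.

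The main technical obstacle is the rigorous passage between the pinning condition and the positive spanning condition at the first-order level, in particular excluding higher-order pinning in minimal configurations (a line that would only be pinned to second order can always be unpinned by a small perturbation, so minimality forces a first-order witness). The geometric identification of ``coplanar and concurrent with $\ell_0$'' with antipodality of the $v_i$, and the refinement of Davis's theorem under the no-antipodal-pair hypothesis, are the other points demanding care but are self-contained.
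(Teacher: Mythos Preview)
Your central claim---that $\FAM$ pins $\ell_0$ if and only if the gradients $v_i$ positively span $\R^4$---is false, and this is where the proposal breaks down. The side function $s_i$ is genuinely quadratic in the line parameters (in the paper's coordinates, $\FU_g(u) = \delta(u_2u_3 - u_1u_4) + \NORM_g\cdot u$), and the quadratic term matters. The paper exhibits an explicit minimal pinning by only \emph{four} constraints (the first example preceding Theorem~\ref{thm:4-pinning}): four vectors cannot positively span $\R^4$, so the first-order gradients do not pin, yet the family pins $\ell_0$ because of the second-order terms. Your attempt to exclude higher-order pinning via minimality is a non-argument: minimality says you cannot \emph{drop} a constraint, not that the pinning is robust under \emph{perturbation}; the four-line example is minimal, is not a first-order pinning, and does not become non-pinning under perturbation.

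Your combinatorial claim for the second part is also incorrect. It is not true that every positive basis of $\R^4$ without an antipodal pair has at most six elements: three critical triangles sharing a single vertex give a seven-element minimal positive spanning set with no antipodal pair (case~(*) of Theorem~\ref{thm:surrounding}). The paper's bound of six relies not merely on the absence of antipodal pairs but on the specific algebraic form of the constraint normals $\NORM_g$, which rules out this configuration (see Lemma~\ref{lem:simplices-of-constraints} and the proof of Theorem~\ref{thm:char-ortho}).

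What the paper actually does is lift the problem to $\R^5$ via $\psi(u) = (u_1,\dots,u_4,\, u_2u_3-u_1u_4)$, turning each quadratic constraint set into a genuine halfspace $\VOLV_g$ and the line space into a quadric $\MANI$. Pinning becomes isolation of the origin in $(\bigcap_g \VOLV_g)\cap\MANI$, which the Isolation Lemma (Lemma~\ref{lem:isolation}) characterizes via the trace on the tangent hyperplane. The bound of eight follows from a Steinitz-type argument in $\R^5$ (Lemma~\ref{lem:general-eight}); the bound of six under the non-degeneracy hypothesis combines the higher-order analysis (Lemma~\ref{lem:general-six}, which handles exactly the cases where the first-order picture fails) with the structural classification of constraint normals (Theorem~\ref{thm:char-ortho}, which handles the cases where it succeeds).
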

We will actually give a full characterization of families of lines
that minimally pin a line~$\ell_{0}$ when either all lines are
orthogonal to~$\ell_{0}$, or when they do not contain pairs of lines
that are at the same time concurrent and coplanar with~$\ell_{0}$.

When $\ell_0$ lies in the plane of a facet of a polytope, the
condition that a line close to~$\ell_0$ intersects the polytope
becomes a \emph{disjunction} of two sidedness constraints with respect
to lines.  Thus it cannot be handled by the methods of
Theorem~\ref{thm:finite-lines}.  In fact, the non-coplanarity
condition is necessary: We describe a construction that shows that no
pinning theorem holds for collections of polytopes when they are
allowed both to intersect each other and to touch the line in more
than just a single point:
\begin{theorem}\label{thm:infinite}
  There exist arbitrarily large minimal pinnings of a line by convex
  %compact
  polytopes in~$\R^3$.
\end{theorem}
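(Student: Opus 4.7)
The plan is to prove Theorem~\ref{thm:infinite} constructively, by exhibiting for each $n$ an explicit family of $n$ convex polytopes that minimally pins a common line~$\ell_0$. Since Theorem~\ref{thm:finite} rules out such families in the non-coplanar regime, every polytope in the construction must have a facet whose supporting plane contains~$\ell_0$; it is precisely the disjunction of sidedness conditions offered by this coplanarity, rather than the single sidedness condition arising from a tangency at an interior edge point, that can be chained together to build arbitrarily large minimal pinnings.

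Taking $\ell_0$ to be the $x$-axis, I would build $n$ thin ``flap'' polytopes $P_1,\dots,P_n$, each obtained by taking a triangle $T_i$ in a plane $\Pi_i$ that contains $\ell_0$ and extruding it by a small $\epsilon>0$ normal to $\Pi_i$. The triangle $T_i$ has a short segment $I_i=[a_i,b_i]\times\{(0,0)\}$ of $\ell_0$ as its base; the planes $\Pi_i$ rotate around $\ell_0$ through angles $\theta_i$; and the intervals $I_i$ are staggered along $\ell_0$ in a pattern to be chosen. Each $P_i$ then has a rectangular facet over $I_i$ lying in a plane containing~$\ell_0$.

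Parametrizing lines near $\ell_0$ by $(y_0,z_0,v,w)\in\R^4$ as those passing through $(0,y_0,z_0)$ with direction $(1,v,w)$, the local condition that $\ell$ meets $P_i$ reduces (for $\epsilon$ small) to the existence of $x\in[a_i,b_i]$ with $u_i+s_i x\geq 0$, where $u_i=y_0\cos\theta_i+z_0\sin\theta_i$ and $s_i=v\cos\theta_i+w\sin\theta_i$. Because $u_i+s_i x$ is linear in $x$, this condition is the union of the two halfspaces $u_i+s_i a_i\geq 0$ and $u_i+s_i b_i\geq 0$ in $\R^4$. The proof then reduces to checking (i) that the $n$-fold intersection of these union-of-halfspaces regions, restricted to a small neighborhood of the origin, is the single point $(0,0,0,0)$, and (ii) that the $(n-1)$-fold intersection obtained by dropping any single $P_i$ contains a curve through the origin.

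The main obstacle is the piecewise nature of each constraint. The full intersection of the $n$ regions is a union of polyhedral cells in $\R^4$, so the challenge is to choose the angles $\theta_i$ and intervals $[a_i,b_i]$ so that this union collapses to the origin locally, while every $(n-1)$-subfamily leaves a one-dimensional escape direction through $\ell_0$. I would aim for a highly symmetric, staggered arrangement; the verification then amounts to a careful analysis of the arrangement of the $2n$ halfspaces (two per polytope) in $\R^4$, driven by a case split on the sign pattern of the quantities $s_i$, and I expect this combinatorial analysis to be the most technical part of the proof.
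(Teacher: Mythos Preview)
Your proposal is an outline rather than a proof: you set up a reasonable class of ``flap'' polytopes and correctly reduce the local intersection condition for each $P_i$ to a disjunction of two linear inequalities in $\R^4$, but you never specify the angles~$\theta_i$ and intervals~$[a_i,b_i]$, and you never verify conditions~(i) and~(ii). The phrase ``case split on the sign pattern of the quantities~$s_i$'' is worrying: with $n$ constraints that is a priori $2^n$ cases, and you give no structure that would collapse them. Moreover, your cones $C_i=\{u_i+s_ia_i<0,\ u_i+s_ib_i<0\}$ are quite rigid---their two bounding hyperplanes always have normals of the special form $(e_i,a_ie_i)$ and $(e_i,b_ie_i)$ with $e_i=(\cos\theta_i,\sin\theta_i)$---so it is not at all clear that one can arrange arbitrarily many of them to cover $\R^4\setminus\{0\}$ \emph{minimally}. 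As written, the heart of the argument is missing.

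The paper's proof sidesteps this four-dimensional combinatorics by a dimension reduction that you do not use. It first spends six fixed auxiliary polytopes $D_1,\dots,D_6$ to force every nearby transversal to lie in a two-dimensional wedge~$W$ inside the $(u_2,u_3)$-plane (four of them force $u_1=u_4=0$, and two more cut out~$W$). Only then does it introduce the $n$ ``interesting'' polytopes $F_1,\dots,F_n$, each an infinite wedge with edge~$\ell_0$, whose set of non-transversals restricted to~$W$ is simply a planar angular sector~$\xi_i$. Arranging $n$ sectors to cover~$W$ minimally, with each sector owning a private direction, is then trivial in two dimensions; a short cropping argument makes everything bounded. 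The minimal pinning produced has size between $n$ and $n+6$, which suffices. If you want to salvage your direct approach, you would need either an explicit choice of parameters with a clean verification that does not grow with~$n$, or auxiliary polytopes of your own to cut the dimension first.
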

We do not know if the condition that the convex polytopes be pairwise disjoint
is
% by itself (without the non-coplanarity condition)
sufficient to guarantee a bounded minimal pinning number.  In fact, we
are not aware of any construction of a minimal pinning by pairwise
disjoint convex objects in~$\R^{3}$ of size larger than six.

\smallskip

Pinning is related to the concept of \emph{grasping} used in robotics:
an object is \emph{immobilized}, or \emph{grasped}, by a collection of
contacts if it cannot move without intersecting (the interior of) one
of the contacts. The study of objects (``fingers'') that immobilize a
given object has received considerable attention in the robotics
community~\cite{mnp-gg-90,m-mrm-01}, and some Helly-type theorems are
known for grasping.  For instance, Mishra et al.'s
work~\cite{MishraSS87} implies Helly-type theorems for objects in two
and three dimensions that are grasped by point fingers.  We give two
examples of how our pinning theorems naturally translate into
Helly-type theorems for grasping a line.  First, we can interpret a
family~$\FAM$ of lines pinning a line $\ell_0$ by considering all
lines as solid cylinders of zero radius.  Each constraint ``cylinder''
touches the ``cylinder''~$\ell_{0}$ on the left. As a result, it is
impossible to move $\ell_{0}$ in any way (except to rotate it around
or translate it along its own axis), because it would then intersect
one of the constraints, so $\ell_0$ is grasped by $\FAM$. Second,
consider a family $\FAM=\{P_1, \ldots, P_n\}$ of polytopes such that
$P_i$ is tangent to $\ell_0$ in a single point interior to an edge
$e_i$, and let $\tilde{P_i}$ denote the mirror image of $P_i$ with
respect to the plane spanned by $\ell_0$ and $e_i$. Since $\FAM$ pins
$\ell_0$ if and only if $\tilde{\FAM}$ grasps it, our pinning theorems
directly translate into Helly-type theorems for grasping a line by
polytopes.

% \Xavier{Can we say that pinning by translates of a centrally
% symmetric convex set $X$ is equivalent to grasping a cylinder by
% point contacts, where the cylinder is the Minkowski sum of $X$ and
% the pinned line or am I confused?}

\paragraph*{Outline of the paper.}
Our main technical contributions are our pinning theorem for lines
(Theorem~\ref{thm:finite-lines}), a construction of arbitrarily large
minimal pinnings of a line by overlapping polytopes with facets
coplanar with the line (Theorem~\ref{thm:infinite}), and a
classification of minimal pinnings of a line by lines
(Theorems~\ref{thm:char-ortho} and~\ref{thm:4-pinning}).

Sections~\ref{sec:prel} and~\ref{sec:pin-eight} are devoted to the
proof of the general case of Theorem~\ref{thm:finite-lines}. The idea
of the proof is the following. We consider a parameterization of the
space of lines by a quadric $\MANI \subset \R^5$, where a fixed line
$\ell_0$ is represented by the origin $0 \in \R^5$, and the set of
lines passing on one side of a given line $g$ is recast as the
intersection of $\MANI$ with a halfspace $\VOLV_g$. This associates
with a family $\FAM$ of lines a polyhedral cone~$C$; $C$ represents
all lines not passing on the left side of some element of $\FAM$.
Then $\FAM$ pins $\ell_0$ if and only if the origin $0$ is an isolated
point of $C \cap \MANI$.  We give a characterization of the cones $C$
such that the origin is isolated in $C \cap \MANI$ in terms of the
trace of $C$ on the hyperplane tangent to $\MANI$ in the origin; this
is our \emph{Isolation Lemma} (Lemma~\ref{lem:isolation}).  The
Isolation Lemma allows us to analyze the geometry of the intersection
$C \cap \MANI$ when $\FAM$ is a pinning of $\ell_0$, and to find a
subfamily of size at most eight that suffices to pin.

The subsequent two sections discuss the various configurations of
lines that form minimal pinnings, first for constraints that are
perpendicular to the pinned line (Section~\ref{sec:ortho-minpin}) and
then for the general case (Section~\ref{sec:minpin}).
Theorems~\ref{thm:finite},~\ref{thm:finite-lines},
and~\ref{thm:infinite} are then proven in Section~\ref{sec:wrapup}.

\section{Lines and constraint sets}
\label{sec:prel}

\paragraph*{Sides of lines.} 

Throughout this paper, all lines are oriented unless specified
otherwise.  Given two non-parallel lines $\ell_{1}$ and~$\ell_{2}$
with direction vectors $d_{1}$ and $d_{2}$, we say that $\ell_{2}$
\emph{passes to the right of} $\ell_{1}$ if $\ell_{2}$ can be
translated by a positive multiple of $d_{1}\times d_{2}$ to
meet~$\ell_{1}$, or, equivalently, if
\begin{equation}
  \label{eq:determinant}
  \det \left(\begin{array}{cccc} p_1 & p_1' &p_2 & p_2' 
    \\ 1 & 1 & 1 & 1 \end{array} \right) < 0, 
\end{equation}
where $p_i$ and $p_i'$ are points on $\ell_i$ such that $p_ip_i'$ is a
positive multiple of~$d_i$.

\paragraph*{Constraints.}

 Throughout the paper, $\ell_{0}$ will denote the line to be pinned.
 A line meeting~$\ell_0$ in a single point represents a
 \emph{constraint} on $\ell_0$.  A line~$\ell$ \emph{satisfies} a
 constraint~$g$ if and only if $\ell$ meets $g$ or passes to the right
 of~$g$.  Consequently, the line $\ell_{0}$ is pinned by a
 family~$\FAM$ of constraints if there is a neighborhood of~$\ell_{0}$
 such that $\ell_{0}$ is the only line in this neighborhood satisfying
 all constraints in~$\FAM$.

 \begin{figure}
  \centerline{\includegraphics{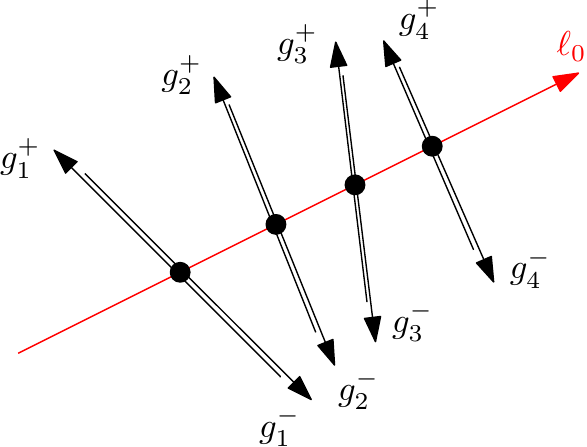}}
  \caption{Eight orthogonal constraints that form a minimal pinning
    of~$\ell_0$.}
  \label{fig:lower-bound-ortho}
 \end{figure}
 We call a constraint \emph{orthogonal} if it is orthogonal
 to~$\ell_0$.  We will give an example of eight orthogonal constraints
 that form a minimal pinning configuration, showing that the constant
 eight in Theorem~\ref{thm:finite-lines} is best possible.  Four
 generically chosen non-oriented lines $g_1, \ldots, g_4$
 meeting~$\ell_0$ and perpendicular to it will have at most two common
 transversals: 
 if no two among $g_1,g_2,g_3$ are coplanar or concurrent, their
 transversals define a hyperbolic paraboloid and it suffices to choose
 $g_4$ not lying on this surface.  Now, let $g_i^+$ and $g_i^-$ denote
 the two oriented lines supported by~$g_i$ (see
 Figure~\ref{fig:lower-bound-ortho}). Since a line satisfies $g_i^+$
 and~$g_i^-$ if and only if it meets~$g_i$, the eight constraints
 $g_1^+, g_1^-, \ldots, g_4^+, g_4^-$ pin~$\ell_0$.  Suppose we remove
 one of the eight constraints, say~$g_1^+$. The common transversals of
 the three remaining lines $g_2,g_3,g_4$
 form a
% quadric surface.
  hyperbolic paraboloid.
 By our construction, $g_1$ intersects this
% quadric
  quadric surface
 transversely (since all four lines are orthogonal to~$\ell_{0}$,
  $g_1$ cannot be tangent to the quadric, and since the four lines
 have at most two transversals, $g_1$ cannot lie in the quadric).
 Thus, the quadric of transversals of $g_2,g_3,g_4$ contains
 lines on both sides of $g_1$, and $\ell_0$ is no longer pinned.
 Therefore the eight oriented lines form a minimal pinning, as claimed.

\begin{wrapfigure}[10]{r}{4cm}
\vspace{-\baselineskip}
  \centerline{\includegraphics{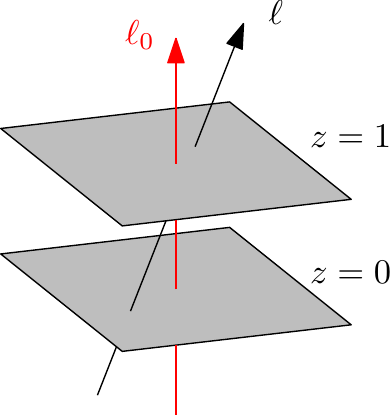}}
\end{wrapfigure}
\paragraph*{A space of lines.}
We choose a coordinate system where~$\ell_{0}$ is the positive
$z$-axis, and denote by $\LINES$ the family of lines whose direction
vector makes a positive dot-product with~$(0,0,1)$.  Since pinning is
a local property, we can decide whether $\ell_{0}$ is pinned
%by considering only lines in~$\LINES$.
by considering only lines in~$\LINES$ as alternate positions for~$\ell$.
(The constraint lines are of course not restricted to~$\LINES$.)
We identify~$\LINES$ with~$\R^4$
using the intersections of a line with the planes $z=0$ and $z=1$: the
point $u = (u_1,u_2,u_3,u_4) \in \R^{4}$ represents the line $\ell(u)$
passing through the points $(u_1,u_2,0)$ and~$(u_3,u_4,1)$.  The line
$\ell_{0}$ is represented by the origin in~$\R^{4}$.

We describe a constraint using the following three parameters:
$g(\lambda,\alpha, \delta)$ denotes the constraint that meets $\ell_0$
in the point $(0,0,\lambda)$, makes slope $\delta$ with the plane $z =
\lambda$, and projects into the $xy$-plane in a line making an angle
$\alpha$ with the positive $x$-axis. On the line $g(\lambda, \alpha,
\delta)$ we can choose two points $(0, 0, \lambda)$ and $(\cos \alpha,
\sin \alpha, \lambda + \delta)$. 
%% removed a_g and b_g, as they seem not be ever used or mentioned
%% anywhere.. why did we need them?
% \delta)$ we can choose two points $a_g = (0, 0, \lambda)$ and $b_g =
% (\cos \alpha, \sin \alpha, \lambda + \delta)$.
By~(\ref{eq:determinant}), a line $\ell(u) \in \LINES$ satisfies
$g(\lambda, \alpha, \delta)$ within the space of lines $\LINES$ if
and only if
\[
  \left\lvert 
  \begin{array}{cccc}
    0 & \cos \alpha & u_1 & u_3\\
    0 & \sin \alpha & u_2 & u_4\\
    \lambda & \lambda + \delta & 0 & 1 \\
    1 & 1 & 1 & 1
  \end{array}\right\rvert \leq 0.
\]
The set $\VOL_{g} \subset \R^{4}$ of lines in $\LINES$ satisfying the constraint
$g=g(\lambda, \alpha, \delta)$ is thus 
\begin{equation}\label{eq:defV}
  \VOL_g = \{\, u \in \R^{4}\mid \FU_{g}(u) \leq 0\,\},
\end{equation}
where $\FU_{g}\colon \R^{4} \rightarrow \R$ is defined as
$\FU_{g}(u) = \delta(u_2 u_3 - u_1 u_4) + \NORM_{g} \cdot u$ and
\[
\NORM_g = \NORM(\lambda,\alpha) = 
\left(\begin{array}{c}
  (1-\lambda)\sin\alpha \\
  - (1-\lambda) \cos\alpha \\
  \lambda \sin\alpha \\
  - \lambda \cos\alpha 
\end{array}\right).
\]

\paragraph*{Pinning by orthogonal constraints.} 

As a warm-up, we consider pinning by \emph{orthogonal} constraints. 
% We
% will see that this problem reduces directly to Steinitz's classic
% theorem.  
If $g$ is an orthogonal constraint, then its parameter
$\delta= 0$, so 
the set~$\VOL_g$ is the halfspace $\NORM_g\cdot u \leq 0$ in $\R^{4}$.
This reduces the problem of pinning by orthogonal constraints to the
following question: when is the intersection of halfspaces in~$\R^4$
reduced to a single point?
%%ADDED
It is well-known that this is equivalent to the fact that the
normal vectors surround the origin as an interior point of their
convex hull:
\begin{proposition}
  A finite collection of halfspaces whose boundaries contain the
  origin intersects in a single point if and only if their outer
  normals contain the origin in the interior of their convex hull.
\end{proposition}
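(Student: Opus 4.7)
The plan is to recast the condition as an instance of polar-cone duality. Writing each halfspace as $H_i = \{u \in \R^d : n_i \cdot u \le 0\}$, the intersection $\bigcap_i H_i$ is precisely the polar of the convex cone positively spanned by $n_1, \dots, n_k$. I would establish both directions by relating the triviality of this polar to the position of the origin inside $\conv(n_1,\dots,n_k)$, using only elementary separation arguments.

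For the \emph{if} direction, I would argue directly. Suppose $0$ is an interior point of $\conv(n_1,\dots,n_k)$, and let $u$ satisfy $n_i \cdot u \le 0$ for every $i$. Because $0$ is interior, one can write $0 = \sum_i \lambda_i n_i$ with all $\lambda_i > 0$, so taking the dot product with $u$ gives $0 = \sum_i \lambda_i (n_i \cdot u)$, a sum of non-positive terms. This forces $n_i \cdot u = 0$ for every $i$; but an interior origin also implies that $n_1,\dots,n_k$ span $\R^d$, so $u$ must vanish, and $\bigcap_i H_i = \{0\}$.

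For the \emph{only if} direction, I would prove the contrapositive: if $0$ is not an interior point of $\conv(n_1,\dots,n_k)$, I need to exhibit a nonzero $u$ satisfying every constraint. If $0 \notin \conv(n_i)$, strict separation gives a vector $v \ne 0$ with $n_i \cdot v < 0$ for every $i$; if $0$ lies on $\bd \conv(n_i)$, the supporting hyperplane theorem still yields a nonzero $v$ with $n_i \cdot v \le 0$ for every $i$. In either case $v$, and indeed the entire ray through it, lies in $\bigcap_i H_i \setminus \{0\}$.

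I do not expect any serious obstacle: the only point worth flagging is to remember the boundary case (origin on $\bd \conv(n_i)$) alongside the strict-separation case, which is what makes the supporting hyperplane theorem necessary rather than bare strict separation. Everything else is entirely standard convex analysis and is independent of the geometry of lines that sets up the statement.
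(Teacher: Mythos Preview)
Your proof is correct and follows essentially the same separation-based approach as the paper; the only cosmetic difference is that for the ``if'' direction you argue directly via a strictly positive convex combination (and the spanning property) whereas the paper takes the contrapositive, observing that a nonzero common point $a$ forces every normal into the halfspace $\{x : x\cdot a \le 0\}$. Both arguments are equally elementary, and your treatment of the ``only if'' direction (splitting into the strict-separation and supporting-hyperplane cases) matches the paper's weakly-separating-hyperplane argument exactly.
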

Here, the interior of the convex hull refers to the interior
in the ambient space, not the relative interior (in the affine hull).
\begin{proof}
  If the intersection of the halfspaces contains a point $a\ne0$, then
  all normal vectors $n$ must have $n\cdot a\le 0$. This implies that
  all normal vectors $n$, and hence their convex hull, lie in the
  halfspace $\{\,x\mid x\cdot a\le 0\,\}$. Therefore, the convex hull contains no
  neighborhood of the origin~$0$.

  Conversely, if the origin is on the boundary or outside the convex hull,
  there must be a (weakly) separating hyperplane $\{\,a\cdot x= b\,\}$
  such that $a\cdot n\le b$ for all normal vectors $n$, and $a\cdot
  x\ge b$ for $x=0$. This implies $a\cdot n\le 0$ for all normal
  vectors $n$, and thus the point $a\ne0$ lies in all halfspaces.
\end{proof}
To analyze \emph{minimal} pinnings,
we make use of the following classic theorem of Steinitz: 
%
%% OC: I've changed the second statement to reflect what Robinson
%% proved and to be actually correct.
\begin{theorem}[Steinitz]
  \label{thm:steinitz} If a point~$y$ is interior to the convex hull
  of a set $X \subset \R^d$, then it is interior to the convex hull of
  some subset~$Y$ of at most $2d$~points of~$X$. The size of\/~$Y$ is
  at most $2d-1$ unless the convex hull of~$X$ has $2d$~vertices that
  form $d$~pairs $(x,x')$ with $y$ lying on the segment~$xx'$.

  Equivalently, if the intersection of a family~$H$ of halfspaces in
  $\R^{d}$ is a single point, then there is a subfamily of~$H$ of size
  at most~$2d$ whose intersection is already a single point.
%% BA why not add:
% ; $2d$ halfspaces are required only if they form $d$ complementrary
% pairs. 
%% Is this even correct?
\end{theorem}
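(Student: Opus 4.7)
My plan is to translate so that $y=0$, reduce to finite $X$ by a compactness argument (a small neighbourhood of the origin inside $\conv X$ is already contained in the convex hull of finitely many points of $X$), and work with a subset $Y \subseteq X$ that is minimal subject to $0 \in \text{int}(\conv Y)$. The goal is then $|Y| \leq 2d$, plus the refinement.

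The key structural observation is that, for each $y \in Y$, minimality gives $0 \notin \text{int}(\conv(Y \setminus \{y\}))$, and the separation theorem yields a nonzero vector $n_y \in \R^d$ with $n_y \cdot y > 0$ (which is forced, since $0 \in \text{int}(\conv Y)$) and $n_y \cdot z \leq 0$ for all $z \in Y \setminus \{y\}$. Combined with a strictly positive dependence $\sum_{y \in Y} \lambda_y y = 0$ with all $\lambda_y > 0$ (which exists because $0$ is interior to $\conv Y$), this is a very rigid sign configuration.

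For the bound $|Y| \leq 2d$, I would argue by induction on $d$. The base case $d=1$ is trivial: one strictly positive and one strictly negative point suffice. For the inductive step, fix any $y_1 \in Y$ together with its separator $n_1 := n_{y_1}$, and split $Y \setminus \{y_1\}$ into $Y_0 = \{y : n_1 \cdot y = 0\}$ and $Y_- = \{y : n_1 \cdot y < 0\}$. The set $Y_0$ lies in the hyperplane $H = n_1^{\perp} \cong \R^{d-1}$; I would show that $Y_0$ is itself a minimal configuration with $0$ in the relative interior of $\conv Y_0$ in $H$, so by induction $|Y_0| \leq 2(d-1)$. The set $\{y_1\} \cup Y_-$ should contribute at most $2$ more, because it only has to balance the single direction $n_1$ in the positive dependence: the total $n_1$-component must vanish, which forces at least one strict contribution from $Y_-$, and minimality pins this down to exactly one. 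Together this gives $|Y| \leq 2 + 2(d-1) = 2d$. The refinement to $|Y| \leq 2d-1$ except when $\conv X$ has $2d$ vertices in $d$ antipodal pairs through $0$ would come from tracking equality through the induction: $|Y_0| = 2(d-1)$ forces, by the inductive hypothesis, $Y_0$ to already be $(d-1)$ antipodal pairs in $H$, and $|\{y_1\} \cup Y_-| = 2$ forces $Y_-$ to be a single point antipodal to $y_1$ through $0$.

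The main obstacles are twofold. First, verifying the inductive hypothesis for $Y_0$: minimality of $Y$ in $\R^d$ does not automatically yield minimality of $Y_0$ in $H$, so I would have to construct separators for the configuration $Y_0 \subset H$ from the original separators $n_y$ for $y \in Y_0$, and verify that $0$ is in fact interior to $\conv Y_0$ inside $H$. Second, proving that $Y_-$ contributes only one element in the non-antipodal case requires exploiting the sign structure of $n_y$ for $y \in Y_-$ to rule out configurations where several points of $Y_-$ would each be indispensable; this is exactly where the dichotomy between $2d-1$ and $2d$ lives.
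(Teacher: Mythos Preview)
The paper does not actually prove this theorem; it quotes the result and refers the reader to Robinson (1942) for a proof that includes the second statement. So there is no argument in the paper to compare your proposal against.

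On its own merits, though, your inductive scheme has a real gap, not just unchecked details. The two claims driving the induction---that $0$ lies in the relative interior of $\conv Y_0$ inside $H = n_1^\perp$, and that $|Y_-| \leq 1$---are both false in general. Take $d=2$ and the minimal configuration $Y = \{(1,0),\,(-1,1),\,(-1,-1)\}$, a triangle with $0$ in its interior. For $y_1 = (1,0)$, the admissible separators are $n_1 = (a,b)$ with $a>0$ and $|b| \leq a$. Since $(-1,1)$ and $(-1,-1)$ can never both lie on $n_1^\perp$, the set $Y_0$ contains at most one point, so it never surrounds $0$ in the line~$H$; and with the natural choice $n_1=(1,0)$ you get $Y_0=\emptyset$ and $|Y_-|=2$. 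The same phenomenon occurs for any nondegenerate simplex in $\R^d$. So the ``main obstacles'' you flag are not technicalities to verify but counterexamples to the decomposition itself. A repair that does work is to peel off not a single point $y_1$ but a whole \emph{critical simplex} $A\subset Y$ (a $(k{+}1)$-point subset, $k\geq 1$, surrounding $0$ in its $k$-dimensional linear hull; such $A$ exists by Carath\'eodory). One then shows that the orthogonal projection of $Y\setminus A$ onto $\ah{A}^\perp\cong\R^{d-k}$ again minimally surrounds the origin, giving $|Y|\leq (k+1)+2(d-k)=2d-k+1\leq 2d$, with equality forcing $k=1$ (an antipodal pair through~$0$) at every stage of the recursion. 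This is precisely the content of the paper's Lemma~\ref{lem:decomposition}, which it proves for its four-dimensional classification.
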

For a proof of Steinitz's Theorem that includes the second statement,
%% BA: "second statement" is now ambiguous
we refer to Robinson~\cite[Lemma~2a]{robinson-1942}.
Theorem~\ref{thm:steinitz} immediately implies the following lemma.
\begin{lemma}
  \label{lem:ortholine-eight} 
  Any minimal pinning of $\ell_0$ by orthogonal constraints has size
  at most eight.
\end{lemma}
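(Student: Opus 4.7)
The proof is a direct combination of the three results stated immediately above the lemma. First I would rephrase the pinning condition in linear-algebraic terms: since each orthogonal constraint $g$ has $\delta=0$, equation~(\ref{eq:defV}) shows that $\VOL_g$ is the closed halfspace $\{u\in\R^4 \mid \NORM_g\cdot u\le 0\}$, whose boundary passes through the origin. Thus a family $\FAM$ of orthogonal constraints pins $\ell_0$ precisely when the common intersection $\bigcap_{g\in\FAM}\VOL_g$ consists only of the origin (here I would remark that the intersection is a cone containing the origin, so being an isolated intersection point is the same as being the only intersection point, which is why the neighborhood condition in the definition of pinning disappears for orthogonal constraints).

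Next I would invoke the Proposition to translate the condition ``intersection equals $\{0\}$'' into ``the outer normals $\{\NORM_g\}_{g\in\FAM}$ contain the origin in the interior of their convex hull.'' At this point the geometry of lines has been completely absorbed into a statement about a finite set of points in $\R^4$.

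Finally, I would apply Steinitz's theorem (Theorem~\ref{thm:steinitz}) in dimension $d=4$: since $0$ lies in the interior of $\conv\{\NORM_g : g\in\FAM\}$, there is a subset $\FAM'\subseteq\FAM$ with $|\FAM'|\le 2d = 8$ such that $0$ is already interior to $\conv\{\NORM_g : g\in\FAM'\}$. Applying the Proposition in the reverse direction, the subfamily $\FAM'$ already pins $\ell_0$. Therefore no minimal pinning by orthogonal constraints can have more than eight members, proving the lemma.

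There is no real obstacle here: every nontrivial step has been established earlier, and the argument is a routine chaining together of the Proposition and Steinitz's theorem. The only thing worth being careful about is the observation, mentioned in the first paragraph, that for orthogonal constraints the ``locally isolated'' definition of pinning coincides with ``uniquely satisfying,'' since the intersection of halfspaces through the origin is a cone.
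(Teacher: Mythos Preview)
Your proposal is correct and follows precisely the approach the paper intends: the paper simply writes ``Theorem~\ref{thm:steinitz} immediately implies the following lemma,'' and you have spelled out that implication accurately. The only minor difference is that the paper's statement of Steinitz's theorem already includes the halfspace formulation (``if the intersection of a family~$H$ of halfspaces in $\R^d$ is a single point, then there is a subfamily of~$H$ of size at most~$2d$ whose intersection is already a single point''), so one could skip the detour through the Proposition, but your route via the normals is equivalent and equally valid.
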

The example of Figure~\ref{fig:lower-bound-ortho} shows that the bound
is tight.  By inspection of the formula for~$\NORM_g$ we observe that
two normals $\NORM_{g_1}$ and~$\NORM_{g_2}$ are linearly dependent if
and only if the orthogonal constraints~$g_1$ and $g_2$ are the same
line up to orientation.  The second statement in
Theorem~\ref{thm:steinitz} implies therefore that the set of the four pairs of
parallel constraints in Figure~\ref{fig:lower-bound-ortho} is in fact
the \emph{only} example of a minimal pinning by eight orthogonal
constraints.

\paragraph*{Linearizing the constraint sets.}

If the constraint $g$ is not orthogonal, then the boundary $\FU_{g}(u)
= 0$ of the set $\VOL_{g}$ is a quadric through the origin.  Since
$\grad \FU_{g}(0) = \NORM_{g}$, the vector $\NORM_{g}$ is the outward
normal of $\VOL_{g}$ at the origin, and we will call it the
\emph{normal} of~$g$. Rather than analyzing the geometry of the
intersection of the volumes~$\VOL_{g}$ bounded by quadrics, we
linearize these sets by embedding~$\LINES$ into five-dimensional space.  This
is based on the observation that the functions $\FU_{g}(u)$ have, up
to multiplication by a scalar, the same nonlinear term
$u_2u_3-u_1u_4$.  The map $\psi\colon \R^{4} \rightarrow \R^{5}$ defined as
\[
\psi\colon (u_1, u_2, u_3, u_4) \mapsto (u_{1}, u_{2}, u_{3}, u_{4},
u_2u_3-u_1u_4) 
\]
identifies $\LINES$ with the quadratic surface 
$$\MANI =\{\, (u_1, u_2, u_3, u_4, u_5)
\mid u_5 =
u_2u_3-u_1u_4\,\} \subset \R^5.$$ The image of $\VOL_g$ under this
transformation is
\begin{equation*}\label{eq:defVb}
  \{\, (u_1, \ldots, u_5)\in 
  \MANI \mid \delta u_5 + \NORM_{g} \cdot (u_1, \ldots, u_4) \leq 0\,\}, 
\end{equation*}
This leads us to define the five-dimensional halfspace
\[
\VOLV_g = \VOLV_{g(\lambda, \alpha, \delta)} = 
\{\,(u_1, \ldots, u_5)\in \R^{5}
\mid \delta u_5 + \NORM_{g} \cdot (u_1, \ldots, u_4) \leq 0\,\} 
.\]
We have $\psi(\VOL_{g}) = \VOLV_{g} \cap \MANI$.  If $\FAM$ is a
family of constraints, then the set of lines in $\LINES$ satisfying all
constraints in $\FAM$ is identified with
\begin{equation}
  \label{eq:cone-repr}
  \bigcap_{g \in \FAM} (\VOLV_{g} \cap \MANI) = 
  \big(\bigcap_{g \in \FAM} \VOLV_{g} \big) \, \cap\, \MANI.
\end{equation}

%The reader interested in how our parameterization is related to the
%classic Pl\"ucker coordinates is referred to Appendix~\ref{sec:pluecker}.

\paragraph*{Cones and the Isolation Lemma.} 

We restrict our attention to convex polyhedral cones with apex at the
origin, that is, 
intersections of finitely many closed halfspaces whose bounding
hyperplanes go through the origin.  We will simply refer to them as
\emph{cones}.  By~(\ref{eq:cone-repr}), the set of lines satisfying a
family~$\FAM$ of constraints is represented as the intersection of a
cone~$C$ and~$\MANI$.  The family~$\FAM$ pins $\ell_{0}$ if and only
if the origin is isolated in~$C \cap \MANI$.  Our next step is to
characterize cones~$C$ such that the origin is isolated in~$C \cap
\MANI$.

Let $T = \{u_5=0\}$ denote the hyperplane tangent to~$\MANI$ at the
origin.  Let $T^{>} = \{u_{5} > 0\}$ and $T^{<} = \{u_{5} < 0\}$ be
the two open halfspaces bounded by $T$. Similarly, let $\MANI^{>} =
\{u_5 > u_2u_3-u_1u_4 \}$ and $\MANI^{<} = \{u_5 < u_2u_3-u_1u_4 \}$
denote the open regions above and below~$\MANI$. We define $T^{\geq}$,
$T^{\leq}$, $\MANI^{\geq}$, and $\MANI^{\leq}$ analogously; refer to
Figure~\ref{fig:manifold}.  The following lemma
classifies how a ray $\{\,tu\mid t>0\,\}$ starting in the origin may be
positioned with respect to $\MANI$.
\begin{figure}
  \centerline{\includegraphics{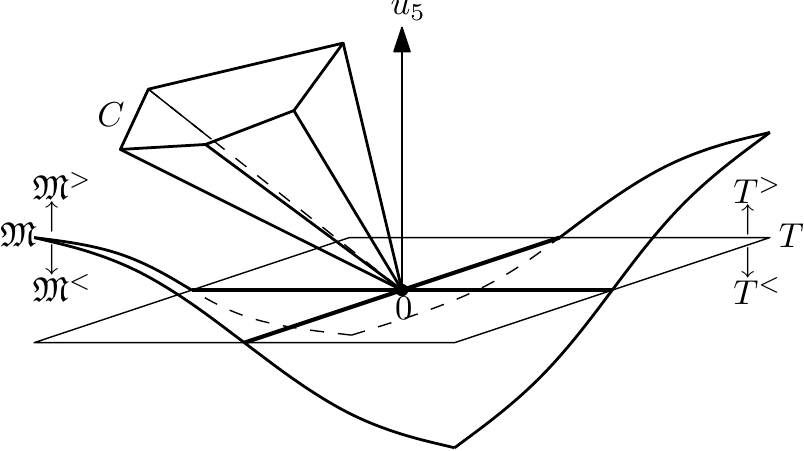}}
  \caption{A symbolic drawing of the manifold $\MANI$ and its tangent
    space~$T$ at~0, and a cone~$C$.  The hyperplane $T\colon
    u_5=0$ is shown as a two-dimensional plane, and the intersection
    $\MANI\cup T=\{\,u_1u_4=u_2u_3\,\}$ is represented as two
    intersecting lines.} 
 \label{fig:manifold}
\end{figure}

\begin{lemma} 
  \label{lem:homogeneous}
  Let $u \in \R^5$ be a non-zero vector.
  \begin{denseitems}
  \item[(i)] If $u \in T\cap\MANI$, then the line
    $\{\,tu\mid t \in \R\,\}$ lies in $\MANI$;
  \item[(ii)] If $u \in T\cap \MANI^>$, then the line $\{\,tu\mid t \in
    \R\,\}$ is contained in $\MANI^> \cup \{0\}$;
  \item[(iii)] If $u \in T\cap \MANI^<$, then the line $\{\,tu\mid t \in
    \R\,\}$ is contained in $\MANI^< \cup \{0\}$;
  \item[(iv)] If $u \in T^{>}$, there is some $\eps > 0$ such that for
    all $t \in (0,\eps)$, $tu \in \MANI^{>}$;
  \item[(v)] If $u \in T^{<}$, there is some $\eps > 0$ such that for
    all $t \in (0,\eps)$, $tu \in \MANI^{<}$.
\end{denseitems} 
\end{lemma}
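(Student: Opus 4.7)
The plan is to reduce everything to a single one-variable sign computation by exploiting that $\MANI$ is defined by a quadric. Let
\[
F(u) = u_5 - (u_2u_3 - u_1u_4), \qquad q(u) = u_2u_3 - u_1u_4,
\]
so that $\MANI = \{F=0\}$, $\MANI^{>} = \{F>0\}$, $\MANI^{<} = \{F<0\}$, and $T = \{u_5 = 0\}$. Since $q$ is homogeneous of degree two and the coordinate $u_5$ is linear, for every $u \in \R^5$ and every $t \in \R$ one has
\[
F(tu) = t\,u_5 - t^2\,q(u).
\]
All five cases of the lemma now reduce to reading off the sign of this one-variable function.

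For case~(i), if $u \in T \cap \MANI$ then $u_5 = 0$ and $q(u) = u_5 = 0$, so $F(tu) \equiv 0$ and the whole line lies on $\MANI$. For~(ii), $u \in T \cap \MANI^{>}$ gives $u_5=0$ and $q(u) < 0$; hence $F(tu) = -t^2 q(u) > 0$ for all $t \neq 0$, placing the punctured line in $\MANI^{>}$. Case~(iii) is identical with the opposite sign on $q(u)$.

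For the transversal cases~(iv) and~(v), suppose $u \in T^{>}$, i.e.\ $u_5 > 0$. Writing $F(tu) = t\bigl(u_5 - t\,q(u)\bigr)$, the factor $u_5 - t\,q(u)$ is continuous in $t$ and positive at $t=0$, so it remains positive for all sufficiently small $t > 0$; choosing $\eps > 0$ smaller than any positive root of $u_5 - tq(u)$ (or $\eps = \infty$ when $q(u) \le 0$) yields $F(tu) > 0$ on $(0,\eps)$, as required. Case~(v) is symmetric. No step presents a real obstacle; the only thing to notice is that the homogeneous-quadratic nature of $q$ is exactly what forces the ray to stay on a single side of $\MANI$ in~(ii) and~(iii), while in~(iv) and~(v) the linear term $tu_5$ dominates near the origin and so determines the side.
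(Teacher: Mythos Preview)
Your proof is correct and follows essentially the same approach as the paper: both compute $f(tu) = tu_5 - t^2(u_2u_3 - u_1u_4)$ and read off the sign. The only cosmetic difference is that the paper prefaces cases~(i)--(iii) with the geometric remark that a line through the origin in the tangent hyperplane of a quadric either lies in the quadric or meets it only at the origin, before noting this follows directly from the formula you wrote down.
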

\begin{proof}
  Since $\MANI$ is a quadric and $T$ is its tangent hyperplane in the
  origin, any line in $T$ through the origin either lies in $\MANI$ or
  meets it only in the origin and otherwise stays completely on
  one side of it.  This can be seen directly from the formula for
  $\MANI\cap T$, and it implies statements~(i)--(iii).  Define now the
  function $f\colon x \mapsto x_{5} - (x_2x_3-x_1x_4)$, whose zero set
  is~$\MANI$; statements~(iv)--(v) follow from the observation that
  $f(tu) = tu_{5} - t^{2}(u_2u_3-u_1u_4)$, and so the sign of~$f(tu)$
  is determined, for small~$t$, by the sign of~$u_{5}$.
\end{proof}
We can now characterize the cones whose intersection with~$\MANI$
contains the origin as an isolated point.

\begin{lemma}[Isolation Lemma]
 \label{lem:isolation}
  Let $C$ be a cone in~$\R^{5}$. The origin is an isolated point of $C
  \cap \MANI$ if and only if either
  \begin{denseitems}
  \item[(i)] $C$ is a line intersecting $T$ transversely, 
  \item[(ii)] $C$ is contained in $T^> \cup (T \cap \MANI^>) \cup
    \{0\}$, or
  \item[(iii)] $C$ is contained in $T^< \cup (T \cap \MANI^<) \cup
    \{0\}$.
  \end{denseitems}
\end{lemma}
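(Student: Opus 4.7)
The plan is to prove both directions of the equivalence using the dichotomy from Lemma~\ref{lem:homogeneous}, which controls how a ray $\{tu : t>0\}$ sits relative to $\MANI$ according to the signs of $u_{5}$ and $u_{2}u_{3}-u_{1}u_{4}$. Throughout, I write $f(x) := x_{5} - (x_{2}x_{3}-x_{1}x_{4})$, so that $\MANI = \{f=0\}$, $\MANI^{>}=\{f>0\}$, and $\MANI^{<}=\{f<0\}$.

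For the \emph{if} direction, case~(i) is immediate: $C\setminus\{0\}$ consists of two rays, one in $T^{>}$ and one in $T^{<}$, and parts~(iv)--(v) of Lemma~\ref{lem:homogeneous} push their small nonzero multiples into $\MANI^{>}$ and $\MANI^{<}$, missing $\MANI$ near the origin. For case~(ii) I argue by contradiction: suppose there exist nonzero $u_{n}\in C\cap\MANI$ with $u_{n}\to 0$, and extract a limit $v$ of the normalized sequence $v_{n}:=u_{n}/\|u_{n}\|$; by closedness of $C$ and compactness, $v\in C\setminus\{0\}$. Dividing the relation $f(u_{n})=0$ by $\|u_{n}\|$ gives $(v_{n})_{5}=\|u_{n}\|\bigl((v_{n})_{2}(v_{n})_{3}-(v_{n})_{1}(v_{n})_{4}\bigr)$, which in the limit forces $v_{5}=0$, so $v\in T$. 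The hypothesis of~(ii) then places $v$ in $T\cap\MANI^{>}$, yielding $v_{2}v_{3}-v_{1}v_{4}<0$; but this makes the right-hand side of the displayed identity negative for large $n$, contradicting $v_{n}\in C\subset T^{\geq}\cup(T\cap\MANI^{>})\cup\{0\}$. Case~(iii) is handled symmetrically.

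For the \emph{only if} direction, Lemma~\ref{lem:homogeneous}(i) rules out any nonzero point of $C$ lying in $T\cap\MANI$, since such a point would contribute an entire ray to $C\cap\MANI$. Thus $C\cap T\cap\MANI=\{0\}$. Next, I observe that inside the hyperplane $T$ the quadric $T\cap\MANI=\{u_{2}u_{3}-u_{1}u_{4}=0\}$ separates $T\setminus\{0\}$ into exactly two open components, $T\cap\MANI^{>}$ and $T\cap\MANI^{<}$ (topologically, this quadric is a cone over the smooth real quadric $\mathbb{P}^{1}\times\mathbb{P}^{1}\subset\mathbb{P}^{3}$, whose complement in $\mathbb{P}^{3}$ has two components). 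Because $u_{2}u_{3}-u_{1}u_{4}$ is homogeneous of even degree and $C\cap T$ is a convex cone, $(C\cap T)\setminus\{0\}$ lies entirely in one component, even when $C\cap T$ is a full line. Trichotomy ensues: either $C\cap T=\{0\}$, or $(C\cap T)\setminus\{0\}\subset T\cap\MANI^{>}$, or $(C\cap T)\setminus\{0\}\subset T\cap\MANI^{<}$. In the first subcase, a routine convexity argument on the punctured cone $C\setminus\{0\}$ yields case~(i) when $C$ is a line crossing $T$ transversely, and otherwise cases~(ii) or~(iii) according to whether $C\subset T^{\geq}$ or $C\subset T^{\leq}$.

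The main obstacle is the remaining subcase, say $(C\cap T)\setminus\{0\}\subset T\cap\MANI^{>}$, where I must rule out the existence of a vector $v\in C$ with $v_{5}<0$. Assuming such $v$ exists and fixing a witness $w\in C\cap T\cap\MANI^{>}$ (so $w_{5}=0$ and $Q(w)<0$, where $Q(u):=u_{2}u_{3}-u_{1}u_{4}$), the point $sv+tw$ lies in $C$ for all $s,t\geq 0$ by convexity, and
\begin{equation*}
  f(sv+tw) \;=\; s v_{5} - s^{2} Q(v) - s t\, B(v,w) - t^{2} Q(w),
\end{equation*}
where $B$ is the symmetric bilinear form polarizing $Q$. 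Since $f(0,t)=-t^{2}Q(w)>0$ for small $t>0$ and $\partial f/\partial s\big|_{(0,0)}=v_{5}<0$, the implicit function theorem yields a smooth function $s=s(t)$ with $s(0)=0$ and $f(s(t)v+tw)=0$; the sign conditions force $s(t)>0$ for small $t>0$. Then $s(t)v+tw$ is a nonzero point of $C\cap\MANI$ tending to the origin, the sought-after contradiction.
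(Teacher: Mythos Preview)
Your proof is correct, and both directions differ from the paper's in technique.

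For sufficiency, the paper gives an explicit quantitative estimate: under hypothesis~(ii) it shows that any $u\in C\cap\MANI\setminus\{0\}$ satisfies $\tau\|u\|\leq u_{5}\leq\tfrac12\|u\|^{2}$, where $\tau>0$ is the distance between the compact sets $\{u\in C:\|u\|=1\}$ and $\{u\in\MANI^{\leq}\cap T:\|u\|\leq 1\}$; this forces $\|u\|\geq 2\tau$. Your compactness argument---normalize a hypothetical sequence $u_{n}\to 0$ in $C\cap\MANI$, extract a limit $v$ on the unit sphere of $C$, observe $v\in T\cap\MANI^{>}$, and derive a sign contradiction in $(v_{n})_{5}=\|u_{n}\|Q(v_{n})$---reaches the same conclusion more qualitatively and is arguably slicker.

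For necessity, the paper is more direct. After noting $C\cap T\cap\MANI=\{0\}$, it assumes $C$ is not a line yet contains both a point $u\in T^{>}\cup(T\cap\MANI^{>})$ and a point $v\in T^{<}\cup(T\cap\MANI^{<})$; perturbing so that the segment $uv$ avoids the origin, Lemma~\ref{lem:homogeneous} gives $tu\in\MANI^{>}$ and $tv\in\MANI^{<}$ for small $t>0$, and the intermediate value theorem on the segment $[tu,tv]\subset C$ produces a nonzero point of $C\cap\MANI$ tending to~$0$. This single IVT step replaces your case split on $C\cap T$, the topological remark about the two components of the complement of $T\cap\MANI$ in~$T$, and the implicit function theorem. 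Your argument is valid (the sign analysis giving $s(t)>0$ goes through because $s'(0)=0$ and the second-order term $t^{2}Q(w)/v_{5}>0$), but the paper's route is more elementary and handles all subcases uniformly without first isolating $C\cap T$.
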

\begin{proof}
  Assume that the origin is isolated in $C \cap \MANI$. First, by
  Lemma~\ref{lem:homogeneous}~(i), the intersection of $C$ and $T \cap
  \MANI$ is exactly the origin.  Thus, $C \subseteq T^> \cup (T \cap
  \MANI^>) \cup T^< \cup (T \cap \MANI^<) \cup \{0\}$.  Assume, for a
  contradiction, that $C$ contains both a point $u \in T^{>} \cup (T
  \cap \MANI^{>})$ and a point $v \in T^{<}\cup (T \cap \MANI^<)$, and
  $C$ is not a line.  Since $C$ is not a line, we can ensure that the
  segment $uv$ does not contain the origin, by perturbing $v$ if
  necessary.  By Lemma~\ref{lem:homogeneous}~(ii)--(v), there is an
  $\eps > 0$ such that for $t \in (0,\eps)$, $tu \in \MANI^{>}$ and
  $tv \in \MANI^{<}$. Let $w_t\ne0$ be the point of $\MANI$ on the
  segment joining $tu$ and $tv$; observe that $w_t \in C$, by
  convexity, and $w_t$ tends to~$0$ as $t$ goes to~$0$, which
  contradicts the assumption that the origin is isolated in $C \cap
  \MANI$. The condition is therefore necessary.

  %% We are not in projective space, so you cannot say ``exactly two
  %% points'' -- X: even in projective space, one would have to
  %% account for multiplicity...
  %% GR: at most 2 points, with multiplicity.
  % But between a positive and a negative value at most 1 zero.

  A line intersecting a quadric transversely meets it in at most two
  points, so condition~(i) is sufficient. Assume condition~(ii) holds.
  If $C=\{0\}$ we are done. Otherwise, the set $A = \{ \,  u \in C \mid
  \|u\| = 1\,\}$ is compact and nonempty.  Let $B = \{\, u \in
  \MANI^{\leq}\cap T \mid \|u\| \leq 1\,\}$.  Since $\MANI^{\leq}$ and
  $T$ are closed, $B$ is compact, and since $0 \in B$, it is nonempty.
  Since $C \subset T^> \cup (T \cap \MANI^>) \cup \{0\}$, we have $C
  \cap \MANI^\leq \cap T = \{0\}$, and thus $A$ and $B$ are disjoint.
  Let $\tau > 0$ be the distance between $A$ and~$B$. For any $u =
  (u_1, \dots, u_5) \in C \cap \MANI\setminus\{0\}$, we claim that
  \begin{equation}
    \label{eq:sandwich}
    \tau \|u\| \leq u_{5} \leq \frac12\|u\|^2.
  \end{equation}
  The upper bound follows from $u_5 = u_2u_3-u_1u_4$ and the
  inequality $xy \leq \frac{x^2+y^2}2$ for $x,y \in \R$.  For the
  lower bound, let $v = u/\|u\|$. Since~$C$ is a cone, $v \in C$ and
  so $v \in A$.  By assumption, $u \in C \cap \MANI\setminus\{0\}
  \subseteq T^>$, and thus $u_5>0$.  This implies that the projection
  $u' = (u_{1},\dots,u_{4}, 0)$ of~$u$ on~$T$, is in $\MANI^{<}$.
  Therefore, by Lemma~\ref{lem:homogeneous}~(iii), $v' = u'/\|u\|
  \in \MANI^{<}$.  Since $\|v'\| \leq \|v\| = 1$, we have $v' \in
  B$.  Thus $\|v - v'\| \geq \tau$, and so $u_{5} = \|u -
  u'\| = \|v - v'\|\cdot \|u\| \geq \tau \|u\|$, completing the proof
  of \eqref{eq:sandwich}.  Now, \eqref{eq:sandwich} implies that any
  point $u \in C \cap \MANI$ other than the origin satisfies $\|u\|
  \geq 2\tau$.  This proves that the origin is isolated in $C \cap
  \MANI$. Condition~(ii) is thus sufficient, and the same holds for
  condition~(iii) by symmetry.
\end{proof}

\paragraph*{The tangent hyperplane.} 

 The hyperplane~$T$ tangent to~$\MANI$ in the origin plays a special
 role in the Isolation Lemma.  There is a geometric reason for this:
 $\MANI \cap T$ represents exactly those lines of~$\LINES$ that
 meet~$\ell_{0}$.  Indeed, for $\ell(u)$ to meet $\ell_{0}$, the
 two-dimensional points $(u_{1}, u_{2})$, $(0,0)$, and $(u_{3},
 u_{4})$ have to lie on a line, which is the case if and only if
 $u_{2}u_{3} - u_{1}u_{4} = 0$.  But that is equivalent to~$\psi(u)
\in T$.

\begin{lemma}\label{lem:three-cap-m}
  If $g$ is a constraint, then $T \cap \MANI \cap \bd \VOLV_g$ is
  the union of two two-dimensional linear subspaces.
\end{lemma}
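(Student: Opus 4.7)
The plan is to interpret $T\cap\MANI\cap\bd\VOLV_g$ geometrically and then identify its two components explicitly. First, since $u_5=0$ on $T$, the bounding hyperplane $\delta u_5+\NORM_g\cdot(u_1,\dots,u_4)=0$ of $\VOLV_g$ reduces on $T$ to the linear condition $\NORM_g\cdot(u_1,\dots,u_4)=0$. As observed just before the lemma, $T\cap\MANI$ is exactly $\psi$ applied to the set of lines in $\LINES$ that meet $\ell_0$, and $\MANI\cap\bd\VOLV_g=\psi(\VOL_g\cap\bd\VOL_g)$ corresponds to the lines of $\LINES$ that meet $g$ in a single point. Consequently $T\cap\MANI\cap\bd\VOLV_g$ parametrizes the lines of $\LINES$ that meet both $\ell_0$ and $g$.

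Second, because $g$ is a constraint it meets $\ell_0$ at the point $p=(0,0,\lambda)$, so $\ell_0$ and $g$ are concurrent and span a plane $\Pi$. An elementary incidence fact says that a line of $\R^3$ meeting two concurrent lines must either pass through their common point or lie in their common plane. This yields two candidate 2-parameter families: the lines of $\LINES$ through $p$, which are cut out in $\R^4$ by the linear equations $(1-\lambda)u_1+\lambda u_3=0$ and $(1-\lambda)u_2+\lambda u_4=0$, and the lines of $\LINES$ contained in $\Pi$, cut out by $\sin\alpha\cdot u_1-\cos\alpha\cdot u_2=0$ and $\sin\alpha\cdot u_3-\cos\alpha\cdot u_4=0$. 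Each is a two-dimensional linear subspace of $\R^4$, and a direct check shows that each lies entirely in the quadric $\{u_2u_3=u_1u_4\}$; hence $\psi$ embeds each as a two-dimensional linear subspace of $\R^5$ contained in $T\cap\MANI\cap\bd\VOLV_g$.

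Third, I would verify that these two subspaces already exhaust the intersection. The cleanest route is to parametrize $T\cap\MANI$ by the Segre map $(a,b,c,d)\mapsto(ac,ad,bc,bd)$ onto the cone of rank-at-most-one $2\times2$ matrices, and observe the key factorization
\[
\NORM_g\cdot(ac,ad,bc,bd)=\bigl[(1-\lambda)a+\lambda b\bigr]\bigl[c\sin\alpha-d\cos\alpha\bigr],
\]
which is immediate from the formula for $\NORM_g$. Thus the vanishing of $\NORM_g\cdot u$ on $T\cap\MANI$ splits into exactly the two cases whose Segre images are the two subspaces just identified, completing the argument. The only non-routine step is spotting this factorization; everything else is an elementary calculation, so I do not anticipate a substantive obstacle.
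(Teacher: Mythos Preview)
Your proposal is correct and follows the same route as the paper: identify $T\cap\MANI\cap\bd\VOLV_g$ with the lines of $\LINES$ meeting both $\ell_0$ and $g$, then split these into the family through the point $g\cap\ell_0$ and the family lying in the plane spanned by $\ell_0$ and $g$, each a two-dimensional linear subspace. Your explicit defining equations and the Segre factorization $\NORM_g\cdot(ac,ad,bc,bd)=[(1-\lambda)a+\lambda b][c\sin\alpha-d\cos\alpha]$ supply more detail than the paper (which simply asserts each family is ``easily seen'' to be a two-dimensional subspace), but the underlying argument is the same.
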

\begin{proof}
  Since $\MANI \cap T$ are the lines in $\LINES$ that meet~$\ell_0$ and $\MANI
  \cap \bd \VOLV_g$ are the lines in $\LINES$ that meet~$g$, $T \cap \MANI
  \cap \bd \VOLV_{g}$ are exactly those lines of~$\LINES$ that
  meet both $\ell_{0}$ and~$g$.  There are two families of such lines,
  namely the lines through the point~$g \cap \ell_{0}$, and the lines
  lying in the plane spanned by $\ell_{0}$ and~$g$.  Each
  family is easily seen to be represented by a two-dimensional linear
  subspace in~$\R^{5}$.
\end{proof}

\paragraph*{Relation to Pl\"ucker coordinates.}
A classic parameterization of lines in space is by means of Pl\"ucker
coordinates~\cite{pottmann-wallner}.  These coordinates map lines in
$\p^3(\R)$ to the Pl\"ucker quadric $\{x_1x_4+x_2x_5+x_3x_6 = 0\}
\subset \p^5(\R)$.  In particular, the oriented line through $(u_1,
u_2, 0)$ and $(u_3, u_4, 1)$ has Pl\"ucker coordinates
\[
(u_3 - u_1, u_4 - u_2, 1, u_2, -u_1, u_1 u_4 - u_2 u_3),
\] 
and so the transformation $\R^{5} \rightarrow \p^{5}(\R)$ defined as
\[
u \mapsto (u_3 - u_1, u_4 - u_2, 1, u_2, -u_1, -u_5)
\]
maps $\psi(u)$ to the Pl\"ucker coordinates of the line~$\ell(u)$.  It
follows that our manifold~$\MANI$ is the image of the Pl\"ucker
quadric under a mapping that sends~$\ell_0$ to the origin, the
hyperplane tangent to the Pl\"ucker quadric at~$\ell_0$
to~$\{u_5=0\}$, and the lines orthogonal to~$\ell_0$ to infinity.  Our
5-dimensional affine representation has the advantage that~$\MANI$
admits a parameterization of the form $u_5= f(u_1, \ldots, u_4)$ where
$f$~is a homogeneous polynomial of degree two. This is instrumental in
our proof of the Isolation Lemma.

We showed that the lines meeting a constraint~$g$ are represented by
(the intersection of $\MANI$ and) a hyperplane in~$\R^{5}$.  This is
no coincidence: the Pl\"ucker correspondence implies that this is true
for the set of lines meeting \emph{any} fixed line~$g$.  Similarly,
given any point $u \in \MANI$, the hyperplane tangent to $\MANI$
at~$u$ intersects $\MANI$ in exactly those lines that meet the line
represented by~$u$---we saw this only for the special hyperplane~$T$.
(Of course, both properties can be easily verified in our setting
without resorting to Pl\"ucker coordinates.)

\section{Minimal pinnings by lines have size at most eight}
\label{sec:pin-eight}

Let $\ah{X}$ denote the linear hull of a set~$X \subset \R^{d}$, that
is, the smallest linear subspace of~$\R^{d}$ containing~$X$.  A
\emph{$j$-space} is a $j$-dimensional linear subspace.  We start with
four lemmas on cones.  (Recall that we defined a cone as the
intersection of halfspaces whose bounding hyperplanes go through the
origin.)

\begin{lemma}
  \label{lem:cone-decomposition}
  Let $C = \bigcap_{h \in H} h$ be a cone defined by a family $H$ of
  halfspaces in~$\R^{d}$. Then
  \[
  \ah{C} = \bigcap_{\underset{\ah{C} \subseteq h}{h \in H}} h = 
  \bigcap_{\underset{\ah{C} \subseteq h}{h \in H}} \bd h.
  \]
\end{lemma}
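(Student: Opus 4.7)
The plan is to set $L = \ah{C}$, split $H$ as $H' \sqcup H''$ with $H' = \{\,h \in H : L \subseteq h\,\}$, and reduce the lemma to the single inclusion $\bigcap_{h \in H'} h \subseteq L$. Since $L$ is a linear subspace and each $\bd h$ is a hyperplane through the origin, $L \subseteq h$ already forces $L \subseteq \bd h$: any $x \in L$ also has $-x \in L \subseteq h$, so if $a_h$ is the outward normal of $h$, then $a_h \cdot x \le 0$ and $a_h \cdot (-x) \le 0$ give $a_h \cdot x = 0$. This yields the free chain $L \subseteq \bigcap_{h \in H'} \bd h \subseteq \bigcap_{h \in H'} h$, so only the reverse inclusion needs work.

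For the hard direction, I would pick a point $p$ in the relative interior of $C$ inside $L$, which exists because $L = \ah{C}$. The key step is to show that every $h \in H''$ contains $p$ \emph{strictly}, i.e.\ $a_h \cdot p < 0$: otherwise $p$ would lie on $\bd h$, but a small $L$-ball around $p$ lies in $C \subseteq h$, and this is only possible when the entire subspace $L$ lies in $\bd h$, contradicting $h \in H''$.

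Given any $x \in \bigcap_{h \in H'} h$, I then consider the perturbation $p + \eps x$ for small $\eps > 0$. For $h \in H'$ one has $a_h \cdot p = 0$ (since $p \in L \subseteq \bd h$) and $a_h \cdot x \le 0$, so $p + \eps x \in h$. For $h \in H''$ the strict inequality $a_h \cdot p < 0$ dominates $\eps\, a_h \cdot x$ whenever $\eps$ is small; since $H''$ is finite (cones are defined by finitely many halfspaces), one choice of $\eps$ works for all of them simultaneously. Hence $p + \eps x \in C \subseteq L$, and combined with $p \in L$ this forces $x \in L$. The only delicate point is the strict inequality $a_h \cdot p < 0$ for $h \in H''$, which is precisely where the relative-interior choice of $p$ is essential; everything else is a routine manipulation of linear inequalities.
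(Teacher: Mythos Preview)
Your proof is correct and follows essentially the same approach as the paper: pick a point in the relative interior of $C$, observe that every halfspace in $H''$ contains it strictly, and use a perturbation along a segment to pull an arbitrary point of $\bigcap_{h\in H'}h$ back into $\ah{C}$. The paper phrases the perturbation geometrically (a neighborhood of the relative-interior point on the segment lies in $C$) while you compute with outward normals, but the underlying argument is the same.
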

\begin{proof}
  Clearly $\ah{C} \subseteq \bigcap_{h \in H \mid \ah{C} \subseteq h}
  h$.  To show the reverse inclusion, we first pick an arbitrary
  point~$x$ in the relative interior of~$C$.  Note that for any
%  halfplane~$h\in H$,
   halfspace~$h\in H$,
  $x\in \bd h$ implies $\ah{C} \subseteq h$.
  %% Statement is true if C = {0}, since relint({0}) = {0} 

  Let $y \in \bigcap_{h \in H \mid \ah{C} \subseteq h} h$, and
  consider the segment~$xy$.  We show that a neighborhood of~$x$ on
  this segment lies in~$C$, implying $y \in \ah{C}$.  Indeed, consider
  $h \in H$. If $h$ contains~$\ah{C}$ then segment~$xy$ is entirely
  contained in~$h$.  If $h$ does not contain~$\ah{C}$ then $x$ lies in
  the interior of~$h$, and a neighborhood of $x$ on the segment~$xy$
  is in~$h$.

  The second equality follows from the fact that any linear subspace
  (in particular, $\ah{C}$) that is contained in $h$ must also be
  contained in $\bd h$.
\end{proof}
We also use the following extension of Steinitz's Theorem:
\begin{lemma}
  \label{lem:Helly-flat}
  If the cone defined by a family $H$ of halfspaces in~$\R^d$ is a
  $j$-space~$E$, then $2d-2j$ of these halfspaces already define~$E$.
\end{lemma}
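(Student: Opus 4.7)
The plan is to reduce the lemma to an application of Steinitz's Theorem (Theorem~\ref{thm:steinitz}) in the ambient space $\Eorth$ of dimension $d-j$, via Lemma~\ref{lem:cone-decomposition}.

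First I would invoke Lemma~\ref{lem:cone-decomposition} to throw away all the redundant halfspaces. Let $H' = \{\,h \in H \mid E \subseteq h\,\}$. By that lemma, $E = \bigcap_{h \in H'} h = \bigcap_{h \in H'} \bd h$, and in particular every $h \in H'$ satisfies $E \subseteq \bd h$. Hence for each such $h$ the bounding hyperplane $\bd h$ is a $(d-1)$-space containing the $j$-space $E$, so it can be written as $E + L_h$ where $L_h$ is a $(d-j-1)$-space of $\Eorth$; correspondingly $h = E + h'$ where $h' \subset \Eorth$ is a closed halfspace bounded by $L_h$ and containing the origin on its boundary.

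Next I would project into $\Eorth \cong \R^{d-j}$. The identity $\bigcap_{h\in H'} h = E$ translates into
\[
\bigcap_{h \in H'} h' = \{0\}
\]
in $\Eorth$. Thus we have a family of closed halfspaces in $\R^{d-j}$ whose boundaries pass through the origin and whose intersection is the single point $0$. Steinitz's Theorem, in the halfspace formulation given in Theorem~\ref{thm:steinitz}, says that $2(d-j) = 2d-2j$ of these halfspaces already intersect in $\{0\}$.

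Finally I would lift the conclusion back. Let $h'_1, \dots, h'_{2d-2j}$ be the subfamily of halfspaces in $\Eorth$ provided by Steinitz, and let $h_1, \dots, h_{2d-2j} \in H' \subseteq H$ be the corresponding halfspaces in $\R^d$. Since each $h_i = E + h'_i$, we obtain
\[
\bigcap_{i=1}^{2d-2j} h_i = E + \bigcap_{i=1}^{2d-2j} h'_i = E + \{0\} = E,
\]
which is exactly the claim. The only subtlety is the bookkeeping of the decomposition $h = E + h'$, but this is routine once we know (from Lemma~\ref{lem:cone-decomposition}) that we need only the halfspaces whose boundaries contain $E$; no real obstacle arises.
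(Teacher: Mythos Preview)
Your proof is correct and follows essentially the same route as the paper: decompose each halfspace as $E$ plus a halfspace in $\Eorth$, and apply Steinitz's Theorem in the $(d-j)$-dimensional space $\Eorth$. The only superfluous step is your invocation of Lemma~\ref{lem:cone-decomposition}: since $E = \bigcap_{h\in H} h$, every $h\in H$ already contains $E$ (and hence, being a linear subspace, $E\subseteq \bd h$), so your $H'$ equals $H$ and nothing is thrown away. The paper makes this observation directly in its first sentence and skips the detour.
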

\begin{proof}
  Every halfspace $h \in H$ contains~$E$, so its bounding hyperplane
  contains~$E$.  Every $h \in H$ can be decomposed as the Cartesian
  product of~$E$ and $h$'s orthogonal projection on the $(d-j)$-space
  $F$ orthogonal to~$E$.  A subfamily of~$H$ intersects in exactly~$E$
  if and only if their orthogonal projections intersect in exactly the
  origin. Since the projection of $H$ on~$F$ is a collection of
  halfspaces that intersect in a single point, by Steinitz's Theorem
  some $2(d-j)$ of these sets must already intersect in a single
  point, and the statement follows.
\end{proof}
In the next two lemmas we consider cones that lie entirely
in~$T^{\geq}$, or even in $T^{>} \cup \{0\}$.  Since we need these
lemmas for arbitrary dimension, we define $T_d = \{\,x \in \R^{d}\mid
x_d =0\,\}$, $T_d^> = \{\,x \in \R^{d} \mid x_d >0\,\}$, and $T_d^\geq =
T_d^> \cup T_d = \{\,x \in \R^{d} \mid x_d \geq 0\,\}$.
\begin{lemma}
  \label{lem:cone-decomposition2}
  Let $C = \bigcap_{h \in H} h  \subseteq T_d^{\geq}$ be a cone defined
  by a family~$H$ of halfspaces in~$\R^{d}$. Then
  \[
  \bigcap_{\underset{\ah{C \cap T_d} \subseteq h}{h \in H}} h \subseteq T_d^{\geq}.
  \]
\end{lemma}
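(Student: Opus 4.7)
My plan is to prove the statement by contradiction. Suppose some $y \in \bigcap_{h \in H'} h$ satisfies $y_d < 0$, where I abbreviate $E := \ah{C \cap T_d}$ and $H' := \{\,h \in H \mid E \subseteq h\,\}$; if $H' = H$ there is nothing to prove. The strategy is to find a point $x_0 \in C \cap T_d$ lying strictly inside every $h \in H \setminus H'$, and then to slide along the open segment from $x_0$ toward $y$ to produce a point of $C$ with negative $d$-th coordinate, contradicting $C \subseteq T_d^{\geq}$.

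Once such an $x_0$ is in hand, the sliding step is short. For each $h \in H'$, the whole segment $[x_0,y]$ lies in $h$ because both endpoints do. For each $h \in H \setminus H'$, continuity together with $x_0$ lying strictly inside $h$ yields some $t_h > 0$ such that $z(t) := (1-t)x_0 + t y$ remains in $h$ for $t \in [0, t_h)$. Taking $t^* := \min_{h \in H \setminus H'} t_h > 0$, we conclude $z(t) \in \bigcap_{h \in H} h = C$ for $t \in [0, t^*)$. But $x_0 \in T_d$ gives $z(t)_d = t\,y_d < 0$ for $t > 0$, contradicting $C \subseteq T_d^{\geq}$.

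The main obstacle is producing $x_0$. For each $h \in H \setminus H'$ with outward normal $n_h$, I need some $f_h \in C \cap T_d$ with $n_h \cdot f_h < 0$. Such an $f_h$ must exist: if $n_h \cdot f$ vanished throughout $C \cap T_d$, then it would vanish on its linear hull $E$, forcing $E \subseteq \bd h$ and hence $E \subseteq h$, contradicting $h \notin H'$. I then set $x_0 := \sum_{h \in H \setminus H'} f_h$; this lies in $C \cap T_d$, since the latter is closed under positive sums (being the intersection of the cone $C$ with the linear subspace $T_d$). For each $h \in H \setminus H'$ the value $n_h \cdot x_0$ is a sum of non-positive terms (each $f_{h'} \in C \cap T_d \subseteq h$) together with the strictly negative term $n_h \cdot f_h$, hence is strictly negative, so $x_0$ is strictly inside $h$ as required. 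The remaining checks are routine.
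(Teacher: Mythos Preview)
Your proof is correct and follows essentially the same contradiction-and-slide argument as the paper's proof: assume a point $y$ with $y_d<0$ lies in the sub-intersection, find a point $x_0\in C\cap T_d$ lying strictly inside every $h\in H\setminus H'$, and move along the segment $x_0y$ to obtain a point of $C$ in $T_d^{<}$. The only difference is cosmetic: the paper obtains $x_0$ in one line by taking any point in the relative interior of $C\cap T_d$ (which automatically lies in the interior of each $h\notin H'$, since otherwise $\bd h$ would support $C\cap T_d$ at a relatively interior point and hence contain $E$), whereas you build $x_0$ explicitly as $\sum_{h\in H\setminus H'} f_h$.
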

\begin{proof}
  Assume there is a point $y \in T_d^{<}$ in $\bigcap_{h \in H
    \mid\ah{C \cap T_d} \subseteq h} h$.  Pick a point~$x$ in the
  relative interior of~$C \cap T_d$.  We consider the segment~$xy$,
  and show that a neighborhood of~$x$ on this segment lies in~$C$, a
  contradiction to $C \subset T_d^{\geq}$.  Indeed, if $\ah{C \cap
    T_d} \subseteq h$ we have $y \in h$ and so $xy$ lies entirely
  in~$h$.  On the other hand, if $\ah{C \cap T_d} \not\subset h$, then
  $x$ lies in the interior of~$h$, and a neighborhood of~$x$ lies
  in~$h$.
\end{proof}
\begin{lemma}
  \label{lem:positive-cone}
  Let $C = \bigcap_{h \in H} h$ be a cone defined by a family $H$ of
  halfspaces in~$\R^d$, with $d \geq 2$,
such than no $h \in H$ is bounded by the hyperplane~$T_d$.
 If $C$ is contained in
  $T_d^{>} \cup \{0\}$
 then there is a subfamily $H' \subset H$ of size at
  most $2d -2$ such that the cone $C' = \bigcap_{h \in H'} h$ defined
  by $H'$ is contained in $T_d^{>} \cup \{0\}$.
\end{lemma}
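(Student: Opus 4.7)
The plan is to pass to the dual picture. Writing $a_h$ for the outward normal of each halfspace $h$, one verifies that $C' \subseteq T_d^{>} \cup \{0\}$ is equivalent to $-e_d \in \mathrm{int}(\mathrm{cone}(\{a_h : h \in H'\}))$, where $\mathrm{int}$ and $\mathrm{cone}$ are taken in $\R^d$. Indeed, $C' \subseteq T_d^{>}\cup\{0\}$ is the same as $C' \cap T_d^{\leq} = \{0\}$, where $T_d^{\leq}=\{x\in\R^d:x_d\le 0\}$ has outward normal $e_d$ (and is not in $H$, since it is bounded by $T_d$); adjoining $T_d^{\leq}$ to $H$ and applying the halfspace form of Steinitz's theorem (Theorem~\ref{thm:steinitz}) translates the condition into the origin lying in the interior of $\conv(\{a_h\} \cup \{e_d\})$, which in turn says that $\mathrm{cone}(\{a_h\})$ contains $-e_d$ in its $\R^d$-interior. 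The hypothesis that no $h\in H$ is bounded by $T_d$ says no $a_h$ is a nonzero scalar multiple of $e_d$; in particular, no $a_h$ is parallel to $-e_d$.

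Apply Carath\'eodory's theorem for cones to $-e_d \in \mathrm{cone}(\{a_h\})$: there is a linearly independent subfamily $I_0 \subseteq H$ with $|I_0|\le d$ and coefficients $\lambda_h > 0$ satisfying $-e_d = \sum_{h \in I_0} \lambda_h a_h$. The non-parallelism hypothesis forces $|I_0|\ge 2$, since a single-term representation $-e_d = \lambda a_{h_0}$ would make $a_{h_0}$ parallel to $-e_d$. Let $V_0 = \ah{\{a_h : h \in I_0\}}$, the linear subspace of dimension $|I_0|$ containing $\pm e_d$. In the non-degenerate case $|I_0|=d$, we have $V_0=\R^d$, and $\mathrm{cone}(\{a_h : h \in I_0\})$ is a full-dimensional simplicial cone with $-e_d$ in its $\R^d$-interior; then $H' := I_0$, of size $d\le 2d-2$, completes the proof.

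The main obstacle is the degenerate case $|I_0|<d$, where we must augment $I_0$ with halfspaces pinning down the remaining $d-|I_0|$ degrees of freedom in the orthogonal complement $V_0^{\perp}$ (note $V_0^{\perp}\subseteq T_d$ since $e_d\in V_0$). Since $-e_d \in \mathrm{int}(\mathrm{cone}(\{a_h\}))$, every perturbation $-e_d + \eps w$ with $w\in V_0^{\perp}$ and small $\eps>0$ lies in $\mathrm{cone}(\{a_h\})$; projecting onto $V_0^{\perp}$ annihilates each $a_h$ with $h\in I_0$ and yields that $\{\pi_{V_0^{\perp}}(a_h) : h \in H\setminus I_0\}$ positively spans $V_0^{\perp}$. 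Steinitz's theorem applied in $V_0^{\perp}$, a space of dimension $d-|I_0|$, then extracts $J_0 \subseteq H\setminus I_0$ with $|J_0| \le 2(d-|I_0|)$ whose projections already positively span $V_0^{\perp}$. Set $H' := I_0 \cup J_0$; then $|H'|\le |I_0|+ 2(d-|I_0|) = 2d - |I_0| \le 2d - 2$. A standard continuity argument, using the strict positivity of the Carath\'eodory coefficients $\lambda_h$ (so that they remain positive under small perturbations, absorbing the order-$\eps$ corrections that arise in the $V_0$-component when realizing perturbations in $V_0^{\perp}$), then places $-e_d$ in the $\R^d$-interior of $\mathrm{cone}(\{a_h : h \in H'\})$, yielding $C'\subseteq T_d^{>}\cup\{0\}$ as desired.
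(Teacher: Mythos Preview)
Your argument is correct and takes a genuinely different route from the paper's. You dualize the condition $C'\subseteq T_d^{>}\cup\{0\}$ to $-e_d\in\mathrm{int}\bigl(\mathrm{cone}\{a_h:h\in H'\}\bigr)$ and then combine conical Carath\'eodory (giving a linearly independent set $I_0$ of size between $2$ and $d$ with $-e_d$ in the relative interior of its simplicial cone) with Steinitz in the orthogonal complement $V_0^{\perp}$ (giving $J_0$ of size at most $2(d-|I_0|)$). The closing continuity step is fine; the only point worth making explicit is that the coefficients $\mu_h$ realizing $v_\perp$ can be chosen \emph{uniformly} bounded over unit $v$: since $0$ lies in the interior of $\conv\{\pi_{V_0^{\perp}}(a_h):h\in J_0\}$ within $V_0^{\perp}$, some ball $B_r$ is contained in that convex hull, so one may take $\sum_h\mu_h\le 1/r$. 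This makes the $V_0$-correction $O(\eps)$ uniformly, and the strict positivity of the $\lambda_h$ then absorbs it.

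The paper instead argues primally and by induction on~$d$. It first uses Helly's theorem to select at most $d$ halfspaces whose cone $C_d$ already lies in $T_d^{\ge}$, then studies the face $C_d\cap T_d$ and its linear hull~$E$; after invoking Lemmas~\ref{lem:cone-decomposition} and~\ref{lem:cone-decomposition2} it projects orthogonally to~$E$, applies the inductive hypothesis there (for at most $2(d-k)-2$ halfspaces), and adds a Steinitz subfamily inside~$E$ (at most $2k$ halfspaces). Your approach avoids both the induction and the two auxiliary cone lemmas, trading them for the dual reformulation and a single Carath\'eodory-plus-Steinitz count; the paper's route stays in the primal picture and reuses the same cone-decomposition machinery that drives the subsequent proof of Lemma~\ref{lem:general-eight}.
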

\begin{proof}
%  We prove the lemma by induction on~$d$. We assume that the statement
%  holds for dimensions~$j < d$, and show that it holds in
%  dimension~$d$ (for $d = 2$ the induction hypothesis is vacuous). 
 The
  cone $C$ is nonempty, but it does not intersect the hyperplane $F =
  \{x_d = -1\}$. Helly's theorem thus implies that there is a subset
  $H_d\subset H$ of size at most~$d$ such that $C_d\cap F =
  \emptyset$, where $C_d = \bigcap_{h \in H_d} h$. Since $C_d$ is a
  cone, this implies $C_d \subset T_d^{\geq}$. If $C_d \subset T^{>}
  \cup \{0\}$, we are done. Otherwise, let $E = \ah{C_d \cap T_d}$.
Since $C_d \subset
  T_d^\geq$, $C_d \cap T_d$ is a face of~$C_d$. Since no halfspace
  in~$H_d$ is bounded by~$T_d$, this face cannot be
  $(d-1)$-dimensional, and so the dimension~$k$ of~$E$ satisfies $1
  \leq k \leq d - 2$.  For $d = 2$, we already obtain a contradiction,
  establishing the induction basis for an inductive proof.
  So let $d > 2$ and assume that
  the statement holds for dimensions~$2\leq j < d$. 
 Let $K = \{\,h \in H_d \mid E \subset h\,\}$. 
 By
  Lemma~\ref{lem:cone-decomposition} applied to the cone $C_d \cap
  T_d$, we have $E = \bigcap_{h \in K} h \cap T_d$, and by
  Lemma~\ref{lem:cone-decomposition2} we have $\bigcap_{h \in K} h
  \subset T_d^\geq$. For $h \in K$, we define $\hat{h}$ as the
  projection of $h$ on the $(d - k)$-space orthogonal to~$E$.  Let
  $\hat{T}_d^{>}$ be the projection of~$T_d^{>}$.  We have $\bigcap_{h
    \in K} \hat{h} \subset \hat{T}_d^{>} \cup \{0\}$, and since $2\leq d-k <
  d$, the induction hypothesis implies that there is a subset $K'
  \subset K$ of size at most $2(d-k) - 2$ such that $\bigcap_{h \in
    K'} \hat{h} \subset \hat{T}_d^{>} \cup \{0\}$.  This implies
  $\bigcap_{h \in K'} h \subset T_d^{>} \cup E$.  By the original
  assumption, we have $\bigcap_{h \in H} h \cap E = \{0\}$, and
  Steinitz's theorem (Theorem~\ref{thm:steinitz}) inside the
  subspace~$E$ implies that there is a subset $K'' \subset H$ of size
  at most~$2k$ such that $\bigcap_{h \in K''}h \cap E = \{0\}$.
  Setting $H' = K' \cup K''$ we have $\bigcap_{h \in H'} h \subset
  T_d^{>} \cup \{0\}$ with $|H'| \leq 2(d-k) - 2 + 2k = 2d-2$,
  completing the inductive step.
\end{proof}
We are now ready to prove the first half of
Theorem~\ref{thm:finite-lines}:
\begin{lemma}  \label{lem:general-eight}
  Any minimal pinning $\FAM$ of a line by constraints has size at most
  eight.
\end{lemma}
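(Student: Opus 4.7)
The plan is to apply the Isolation Lemma~(Lemma~\ref{lem:isolation}) to $C=\bigcap_{g\in\FAM}\VOLV_g$ and, in each of its three cases, extract a subfamily $\FAM'\subseteq\FAM$ of size at most~$8$ whose own cone still satisfies the same isolation condition; minimality of $\FAM$ then yields $|\FAM|\leq 8$. Case~(i), where $C$ is a line transverse to $T$, is immediate: $C$ is a $1$-dimensional linear subspace of $\R^5$, so Lemma~\ref{lem:Helly-flat} with $d=5$, $j=1$ extracts a subfamily of size $2d-2j=8$ defining~$C$. Cases~(ii) and~(iii) are symmetric, so I focus on~(ii), assuming $C\subseteq T^{>}\cup(T\cap\MANI^{>})\cup\{0\}$, and set $k=\dim(C\cap T)$. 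When $k=0$ one has $C\subseteq T^{>}\cup\{0\}$; since $\NORM_g\neq 0$ for every constraint~$g$, no $\VOLV_g$ is bounded by~$T$, and Lemma~\ref{lem:positive-cone} with $d=5$ extracts a subfamily of size $\leq 2\cdot 5-2=8$ whose cone still lies in $T^{>}\cup\{0\}$, hence still pins by the Isolation Lemma.

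When $k\geq 1$, set $E=\ah{C\cap T}$ and adapt the strategy of the proof of Lemma~\ref{lem:positive-cone}. The plan is to split $\FAM'$ into two pieces $K'$ and $K''$. The subfamily $K'$ consists of halfspaces whose boundaries contain~$E$; these project to halfspaces in the quotient $\R^5/E\cong\R^{5-k}$, and applying Lemma~\ref{lem:positive-cone} in that quotient (after verifying its hypotheses) extracts at most $2(5-k)-2=8-2k$ halfspaces whose intersection is contained in $T^{>}\cup E$. The subfamily $K''$ operates inside $E\cong\R^k$: by a Steinitz- or Helly-style argument, one extracts at most $2k$ additional halfspaces whose traces on $E$ force the intersection's trace on $E$ to lie in $(E\cap\MANI^{>})\cup\{0\}$. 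Combining, $\bigcap_{K'\cup K''}\VOLV_g\subseteq T^{>}\cup(T\cap\MANI^{>})\cup\{0\}$, while $|K'\cup K''|\leq(8-2k)+2k=8$.

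The main obstacle is the sub-case $k\geq 1$, where several subtleties must be addressed: (a) Lemma~\ref{lem:cone-decomposition} applied to $C\cap T$ must justify restricting attention to halfspaces whose boundaries contain~$E$; (b) the quotient projection must send these to halfspaces in $\R^{5-k}$ satisfying the hypothesis of Lemma~\ref{lem:positive-cone}, namely that none is bounded by the image of~$T$ (this uses $\NORM_g\neq 0$ together with $E\subseteq T$); and (c) the ``inside~$E$'' step must genuinely produce $\leq 2k$ halfspaces, even though the target region $(E\cap\MANI^{>})\cup\{0\}$ is bounded by a quadric rather than a half-subspace. The third point is the delicate one: the quadratic form $u_1u_4-u_2u_3$ has indefinite signature $(2,2)$ on~$T\cong\R^4$, which caps $k$ at~$3$, and a case analysis over $k\in\{1,2,3\}$ according to the signature of the form restricted to~$E$ handles the distinct geometric sub-situations and their respective halfspace counts.
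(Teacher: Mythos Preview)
Your outline mirrors the paper's proof: Isolation Lemma, case~(i) via Lemma~\ref{lem:Helly-flat}, and in case~(ii) set $E=\ah{C\cap T}$ with $k=\dim E$, extract $K'$ of size at most $8-2k$ by applying Lemma~\ref{lem:positive-cone} in the quotient by~$E$, then pick $K''$ working inside~$E$. One small correction: signature $(2,2)$ does not by itself cap $k$ at~$3$ (a full-dimensional open cone in $T$ can perfectly well lie in $\{u_1u_4>u_2u_3\}\cup\{0\}$). The correct argument, whose ingredient you already have, is that $C\cap T$ is a face of $C\subseteq T^{\geq}$ and no $\VOLV_g$ is bounded by~$T$ (since $\NORM_g\neq 0$), so this face cannot be a facet.

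The genuine gap is step~(c). Steinitz and Helly cut down to a point or to a flat, not to the open region $\{Q|_E>0\}\cup\{0\}$ for an indefinite form~$Q$, so the blanket ``$\leq 2k$ by a Steinitz- or Helly-style argument'' is not justified as stated. For $k=1$ and $k=2$ a direct argument is easy (for $k=1$ the whole line $E$ already lies in $\MANI^{>}\cup\{0\}$ by Lemma~\ref{lem:homogeneous}(ii), so zero extra constraints; for $k=2$ two halfspaces recover the 2D wedge $C\cap T$). For $k=3$, however, you are missing the key structural observation that drives the paper's argument: by Lemma~\ref{lem:cone-decomposition}, $E$ equals the intersection over $g\in H'$ of the hyperplanes $T\cap\bd\VOLV_g$ inside the $4$-space~$T$, and since distinct hyperplanes in a $4$-space meet in dimension at most~$2$, one must have $E=T\cap\bd\VOLV_{g_0}$ for a single $g_0\in\FAM$. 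Lemma~\ref{lem:three-cap-m} then gives that $E\cap\MANI$ is two $2$-planes meeting in a line~$f$, so $Q|_E$ \emph{always factors} and only that degenerate signature arises. Projecting along~$f$ reduces the inside-$E$ problem to a 2D wedge whose two bounding edges come from at most four constraints. A bare signature case split, by contrast, would also have to confront signature~$(2,1)$, where $\{Q|_E>0\}$ is the exterior of a round quadratic cone, and there it is not at all clear that $2k=6$ halfspaces drawn from the given family suffice.
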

\begin{proof}
  Let $\FAM$ be a minimal pinning of the line~$\ell_0$ by constraints,
  let~$H$ be the family of halfspaces $\{\,\VOLV_{g} \mid g \in \FAM\,\}$,
  and let $C$ denote the cone~$\bigcap_{h\in H} h$. Since $\FAM$ pins
  $\ell_0$, the origin is an isolated point of $C \cap \MANI$ and we
  are in one of the cases~(i)--(iii) of the Isolation Lemma. In
  case~(i), $C$ is a line. By Lemma~\ref{lem:Helly-flat}, $C$ is then
  equal to the intersection of at most eight halfspaces from~$H$,
  implying that $\FAM$ has cardinality at most eight.  Without loss of
  generality, we can now assume that we are in case~(ii) of the
  Isolation Lemma, that is $C \subseteq T^> \cup (T \cap \MANI^>) \cup
  \{0\}$. (Case~(iii) is symmetric.)

  Let $E = \ah{C \cap T}$ and denote the dimension of~$E$ by~$k$.  As
  in the proof of Lemma~\ref{lem:positive-cone}, we observe that
  since~$T$ cannot be the boundary of a halfspace in~$H$, we have $E
  \neq T$ and so $0 \leq k \leq 3$.  Let $H' \subset H$ be the set of
  $h \in H$ with $E \subseteq h$.  For $h \in H'$, we define $\hat{h}$
  as the projection of $h$ on the $(5 - k)$-space orthogonal
  to~$E$.  Let $\hat{T}$, $\hat{T}^{\geq}$, and $\hat{T}^{\leq}$ be
  the projections of $T$, $T^{\geq}$, and $T^{\leq}$, respectively.

  By Lemma~\ref{lem:cone-decomposition2} we have $\bigcap_{h \in H'} h
  \subseteq T^{\geq}$.  Applying Lemma~\ref{lem:cone-decomposition} to
  the cone $C \cap T$ inside the 4-space~$T$, we have $\bigcap_{h \in
    H'} h \cap T = E$.  Together this implies that $\bigcap_{h \in H'}
  \hat{h} \subset \hat{T}^{>} \cup \{0\}$.  Applying
  Lemma~\ref{lem:positive-cone} in the $(5-k)$-dimensional subspace
  orthogonal to~$E$, we have a subfamily $K \subset H'$ of size at
  most $2(5-k) - 2 = 8 - 2k$ such that $\bigcap_{h \in K}\hat{h}
  \subset \hat{T}^{>} \cup \{0\}$, implying that $\bigcap_{h \in K} h
  \subset T^{\geq}$ and $\bigcap_{h \in K} h \cap T = E$.

  We have thus found a small subset~$K$ of constraints that prevent
  $C$ from entering~$T^<$. By case~(ii) of the Isolation Lemma, we
  still have to ensure that the part of the cone that lies within~$T$
  does not enter~$\MANI^{\leq}$, by an appropriate set of additional
  constraints.  We give a direct geometric argument in the
  $k$-dimensional subspace~$E$, arguing separately for each possible
  value of~$k$:
  \begin{itemize}
  \item If $k = 0$, then $E = \{0\}$ and the at most $8 - 0 = 8$~constraints
    in $K$ already pin~$\ell_{0}$.
    
  \item If $k = 1$, then~$E$ is a line contained in $T$. By
    Lemma~\ref{lem:homogeneous}~(ii), that line intersects $\MANI$ in
    a single point, which is the origin.  Thus the at most
    $8 - 2 = 6$ constraints in $K$ already pin~$\ell_{0}$.

  \item If $k = 2$ then $C \cap T$ is a plane, a halfplane, or a
    convex wedge lying in the $2$-space~$E$. We can pick at most two
    constraints $h_1, h_2$ in $H$ such that $(\bigcap_{h \in K} h)
    \cap h_1 \cap h_2 \cap T = C \cap T$. Lemma~\ref{lem:isolation}
    now implies that the at most $8 - 4 + 2 = 6$ constraints $K \cup
    \{ h_{1}, h_{2}\}$ pin~$\ell_{0}$, since their intersection is contained
   in $T^>\cup (T\cap C) \subseteq T^>\cup (T\cap \MANI^>) \cup\{0\}$.

  \item If $k = 3$, applying Lemma~\ref{lem:cone-decomposition} to the
    cone $C \cap T$, we find that $E = \bigcap_{h \in H'} 
    (h \cap T)$. By Lemma~\ref{lem:Helly-flat}, the
    3-space $E$ is the intersection of $T$ and two halfspaces in~$H$.
    This implies that $E$ is of the form $E = T \cap \bd
    \VOLV_{g_0}$ with~$g_0 \in \FAM$. Then, by
    Lemma~\ref{lem:three-cap-m}, $E \cap \MANI$ is the union of two
    2-spaces~$E_{1}$ and~$E_{2}$ that intersect along a
    line~$f$. These two $2$-spaces partition the $3$-space~$E$ into
    four quadrants; since $C \cap T$ intersects $\MANI$ only in the
    origin, it must be contained in one of these quadrants. We project
    $C \cap T$ along~$f$ and obtain a two-dimensional wedge.  The
    boundaries of this wedge are projections of edges of the
    three-dimensional cone $C \cap T$.  Each edge is defined by at
    most two constraints of the three-dimensional cone $C \cap T$, and
    thus we can find at most four constraints $K'$ of $C \cap T$ that
    define the same projected wedge, and thus ensure that $\bigcap_{h
      \in K\cup K'} h \cap T \subseteq (T\cap \MANI^>) \cup\{0\}$.
    Adding these edges to $K$, we obtain a family of at most $8 - 6 +
    4 = 6$ constraints that pin~$\ell_{0}$. \qedhere
  \end{itemize}
\end{proof}
All cases in the proof can actually occur---we will see examples in
Section~\ref{sec:minpin} when we understand the geometry of pinning
configurations better.  The example of
Figure~\ref{fig:lower-bound-ortho} shows that the constant eight is
indeed best possible. However, a look at the proof of
Lemma~\ref{lem:general-eight} shows that in the case were $C \cap T
\neq \{0\}$ (that is, when $k = 1, 2, 3$), a minimal pinning has size
at most six. We will make use of this fact later, in the proof of
Lemma~\ref{lem:general-six}.

The first statement of Theorem~\ref{thm:finite} (that is, the general
case) follows from Lemma~\ref{lem:general-eight}. The reader
interested only in obtaining a finite bound for minimal pinnings by
polytopes can
skip Sections~\ref{sec:ortho-minpin} and~\ref{sec:minpin}.
% proceed to Section~\ref{sec:wrapup}.

\section{Minimal pinning configurations by orthogonal constraints}
\label{sec:ortho-minpin} 

In this section, we geometrically characterize minimal pinning
configurations for a line by orthogonal constraints.  We consider our
representation of linespace by~$\R^4$, where the volume of lines
satisfying an orthogonal constraint is a halfspace having the origin
on its boundary.  Recall that, a family of halfspaces through the
origin intersects in a single point if and only if the convex hull of
their normal vectors contains the origin in its interior.  We say that
a set of points in~$\R^{d}$ \emph{surrounds the origin} if the origin
is in the interior of their convex hull.  A set~$N$ \emph{minimally
  surrounds} the origin if $N$ surrounds the origin, but no proper
subset of~$N$ does.
%%Euclidean 4-space is the vector space R^4 with the Euclidean norm.
%%The words ``point'' and ``vector'' mean the same thing, where 
%%''point'' additionally implies it being an element of Euclidean space.
%%There is no need to ``identify'' anything:
%%We identify unit vectors with points on the unit sphere and
%%arbitrary vectors with points in $\R^4$.

We first give a description of minimal families of points surrounding
the origin in~$\R^4$ as the union of simplices surrounding the origin
in a linear subspace (Theorem~\ref{thm:surrounding}). We then
characterize such simplices that can be realized by normals of
orthogonal constraints (Lemma~\ref{lem:simplices-of-constraints}), and
our classification follows (Theorem~\ref{thm:char-ortho}).

\subsection[Points minimally surrounding the origin in 4-space]
{Points minimally surrounding the origin in~$\R^4$}

%In this section we consider point sets that do not
%contain the origin as an element.
  A point set~$S$ \emph{surrounds the
  origin in a linear subspace~$E$} if $S$ spans~$E$ and the origin
lies in the relative interior of~$\conv(S)$.  We note that this is
true if and only if every point $y \in E$ can be written as $y =
\sum_{x\in S} \lambda_{x}x$ with all~$\lambda_{x} \geq 0$.

 A \emph{simplex} of dimension $k$, or \emph{$k$-simplex}, is a set of
 $k+1$ affinely independent points in $\R^d$; we also say
 \emph{segment} for~$k=1$, \emph{triangle} for~$k=2$, and
 \emph{tetrahedron} for~$k=3$.  We say that a simplex~$N$ is
 \emph{critical} if it surrounds the origin in its linear
 hull~$\ah{N_{}}$.  Observe that if $S$ and $T$ are two
 critical simplices then $S \not\subset T$.

\begin{lemma}
  \label{lem:have-critical}
  Let $N$ be a point set with $0 \in \conv(N)$.  Then $N$ contains a
  critical simplex.
\end{lemma}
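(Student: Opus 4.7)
The plan is to extract a critical simplex from $N$ by taking a minimal subset whose convex hull still contains the origin, and then showing that minimality forces affine independence together with strictly positive convex coefficients.

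First I would fix a subset $S \subseteq N$ that is inclusion-minimal with respect to the property $0 \in \conv(S)$; such an $S$ exists because $N$ itself has this property. Writing $0 = \sum_{x\in S} \lambda_x x$ with $\lambda_x \geq 0$ and $\sum_{x \in S} \lambda_x = 1$, minimality of $S$ immediately gives $\lambda_x > 0$ for every $x \in S$, since otherwise we could delete any point with $\lambda_x=0$ and still represent $0$ as a convex combination of the remaining points.

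Next I would argue that $S$ is affinely independent, which is the main (though mild) obstacle. Suppose for contradiction that there exist scalars $\mu_x$, not all zero, with $\sum_{x\in S}\mu_x = 0$ and $\sum_{x\in S}\mu_x x = 0$. Then for every real $t$ the family $\{\lambda_x + t\mu_x\}_{x\in S}$ still sums to $1$ and still gives $0$ as a linear combination of $S$. Choosing $t$ to be the supremum of values for which all coefficients remain nonnegative (such a $t$ exists and is finite because some $\mu_x$ is nonzero and $\sum\mu_x=0$ forces both signs to appear), at least one new coefficient vanishes while the rest stay nonnegative. Dropping the vanishing coefficient yields a strict subset of $S$ whose convex hull still contains $0$, contradicting minimality. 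Therefore $S$ is a simplex.

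Finally, since $S$ is affinely independent and $0 = \sum_{x\in S} \lambda_x x$ with all $\lambda_x > 0$ summing to $1$, the origin lies in the relative interior of $\conv(S)$ inside the affine hull of $S$; because $0$ belongs to this affine hull, the affine hull coincides with $\ah{S}$. Thus $S$ spans $\ah{S}$ and $0$ lies in the relative interior of $\conv(S)$ within $\ah{S}$, which is exactly the definition of $S$ surrounding the origin in $\ah{S}$. Hence $S$ is a critical simplex contained in $N$, completing the proof.
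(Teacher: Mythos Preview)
Your proof is correct and follows essentially the same minimality idea as the paper's proof. The paper is more concise: it invokes Carath\'eodory's theorem to obtain a simplex in $N$ containing~$0$ and then takes one of smallest dimension, whereas you take an inclusion-minimal subset and explicitly verify affine independence and strict positivity of the coefficients---effectively re-deriving the Carath\'eodory step by hand.
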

\begin{proof}
We simply take a simplex $S$ in~$N$ of smallest dimension such that $0 \in
  \conv(S)$. Such a simplex must exist by Carath\'eodory's
  Theorem.
% Since $0 \not\in N$, the dimension of $S$ is at least one,
%  and $S$ is critical.
\end{proof}

\begin{lemma}
 \label{lem:overlap}
 Let $A$ be a set that minimally surrounds the origin.
 \begin{denseitems}
  \item[(i)] For any critical simplex $X \subset A$, no point of $A
    \setminus X$ lies in the linear hull of $X$.
  \item[(ii)] If a critical simplex $X \subset A$ contains $k$
    points of a critical $k$-simplex $Y \subset A$ then $X=Y$.
 \end{denseitems}
\end{lemma}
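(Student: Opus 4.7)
For part~(i), the plan is to argue that any $p \in A \setminus X$ lying in $\ah{X}$ would be redundant, contradicting the minimality of~$A$. Since $X$ is a critical simplex, it surrounds the origin in~$\ah{X}$, so by the characterization recalled at the beginning of the subsection every element of $\ah{X}$---and in particular $p$---is a nonnegative linear combination $p = \sum_{x \in X} \mu_x x$. To show that $A \setminus \{p\}$ still surrounds the origin, take any $y$ in the ambient space and, using that $A$ surrounds the origin, write $y = \sum_{a \in A} \lambda_a a$ with $\lambda_a \ge 0$; substituting the expression for~$p$ yields
\[
y \;=\; \sum_{a \in A \setminus \{p\}} \lambda_a a \;+\; \lambda_p \sum_{x \in X} \mu_x x,
\]
still a nonnegative combination, now over $A \setminus \{p\}\supseteq X$. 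Hence $A \setminus \{p\}$ positively spans the ambient space and therefore surrounds the origin, contradicting minimality.

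For part~(ii), the plan is to combine~(i) with the observation (made in the paragraph preceding Lemma~\ref{lem:have-critical}) that no critical simplex is properly contained in another. Since $|Y| = k+1$ and $|X \cap Y| \ge k$, at most one point of~$Y$ lies outside~$X$. If $Y \subseteq X$, that observation immediately forces $Y = X$. Otherwise let $\{y\} = Y \setminus X$; since $Y$ is critical, the origin is expressed as a strict convex combination $0 = \sum_{z \in Y} \lambda_z z$ with all $\lambda_z > 0$, and solving for~$y$ yields
\[
y \;=\; -\frac{1}{\lambda_y} \sum_{z \in Y \setminus \{y\}} \lambda_z z,
\]
which is a linear combination of points in $Y \setminus \{y\} \subseteq X$. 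Hence $y \in \ah{X}$, yet $y \in A \setminus X$, in direct contradiction with part~(i).

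The only nontrivial ingredient is the equivalence between ``$S$ surrounds the origin in~$E$'' and ``every point of~$E$ is a nonnegative combination of elements of~$S$'', which is already supplied in the paper; once that is in hand, the argument for~(i) is essentially a one-line substitution and the argument for~(ii) is a two-case analysis, so I do not anticipate any serious obstacle. The only bookkeeping point to watch in~(i) is checking that the replacement of~$p$ by $\sum_x \mu_x x$ really yields a combination over $A \setminus \{p\}$ rather than slipping back outside that set, which is immediate because $X \subseteq A \setminus \{p\}$.
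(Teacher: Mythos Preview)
Your proof is correct and follows essentially the same approach as the paper: for~(i) you substitute the nonnegative expansion of~$p$ in terms of~$X$ into the expansion of an arbitrary point over~$A$, exactly as the paper does, and for~(ii) you use that the missing point of~$Y$ lies in $\ah{Y\setminus\{y\}}\subseteq\ah{X}$ and then invoke~(i), which is precisely the paper's argument (the paper phrases it slightly more compactly by directly concluding $Y\subseteq X$ from~(i) rather than splitting into two cases).
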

\begin{proof}
(i) Assume that $p \in A \setminus X$ lies in the linear hull of $X$. We
 can then write $p=\sum_{x \in X} \lambda_x x$ with $\lambda_x \geq
 0$.  Let $A'= A \setminus \{p\}$.
 Since $A$ surrounds the origin, every point $q \in \R^{d}$ can be
 written as $q=\sum_{a \in A} \mu_a a$ with $\mu_a \geq 0$. 
 But this can be rewritten as 
 \[
 q = \mu_{p}p + \smashoperator{\sum_{a \in A'}} \mu_{a}a
 = \smashoperator{\sum_{a \in X}}\mu_p \lambda_x x +  
 \smashoperator{\sum_{a \in A'}} \mu_{a}a,
 \]
 and so $q$ can be expressed as a non-negative sum of the elements
 of~$A'$.  But then $A'$ surrounds the origin, and $A$ is not minimal,
 a contradiction that proves statement~(i).

(ii)
 If $Y \subset A$ is a critical simplex then any point $p \in Y$ lies
 in the linear hull of $Y \setminus \{p\}$. Thus, if $X \subset A$ is
 another critical simplex that contains $Y \setminus \{p\}$, then $p
 \in \ah{X}$, and
% statement~(ii) follows from~(i).
(i) implies $p\in X$. Thus  $Y \subset X$, and hence $Y=X$.
\end{proof}

Our goal is to describe sets minimally surrounding the origin as
unions of (not necessarily disjoint) critical simplices. Our first
step is the following ``decomposition'' lemma.
\begin{lemma}
  \label{lem:decomposition}
  Let $A$ be a critical simplex of dimension at most~$d-1$, let
  $E=\ah{A}$ be its linear hull, and let $B$ be a set of points
  in~$\R^d$.
  \begin{denseitems}
  \item[(i)] $A \cup B$ surrounds the origin in~$\R^d$ if and only if
    the orthogonal projection of~$B$ on the orthogonal
    complement~$\Eorth$ of~$E$ surrounds the origin in $\Eorth$.
  \item[(ii)] If $A \cup B$ minimally surrounds the origin
  in~$\R^d$ then the orthogonal projection of~$B$ on the orthogonal
  complement~$\Eorth$ of~$E$ minimally surrounds the origin in
  $\Eorth$.
  \item[(iii)] If $\conv(B) \cap E \neq \emptyset$, and $\conv(X)\cap E
    = \emptyset$ for every $X \subsetneq B$, then $B$ is %entirely
    contained in a critical simplex of~$A \cup B$.
  \end{denseitems}
\end{lemma}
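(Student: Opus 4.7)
The plan is to handle the three parts in order, using the orthogonal projection $\pi\colon \R^d \to \Eorth$ together with the defining property that $A$ being critical means every $y \in E$ can be written as $\sum_{a\in A} \lambda_a a$ with all $\lambda_a \geq 0$.

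For (i), in the forward direction, given $y \in \Eorth$ express $y = \sum_a \lambda_a a + \sum_b \mu_b b$ as a non-negative combination and apply $\pi$; since $\pi$ vanishes on $A \subset E$, this yields $y = \sum_b \mu_b \pi(b)$. Conversely, given any $y \in \R^d$, split $y = y_E + y_\perp$ with $y_E \in E$ and $y_\perp \in \Eorth$, write $y_\perp = \sum_b \mu_b \pi(b)$ using the hypothesis, then observe that $y_E - \sum_b \mu_b(b - \pi(b)) \in E$; criticality of $A$ lets me expand this remainder as a non-negative combination of $A$, and summing the two pieces yields $y$ as a non-negative combination of $A \cup B$.

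For (ii), assume $A \cup B$ is minimally surrounding. I first argue that $\pi$ is injective on $B$: if $\pi(b_1)=\pi(b_2)$ with $b_1\neq b_2$ in $B$, then $b_1-b_2 \in E$, so criticality of $A$ gives $b_1-b_2 = \sum_a \nu_a a$ with $\nu_a \geq 0$. Substituting $b_1 = b_2 + \sum_a \nu_a a$ in any non-negative representation shows $A\cup (B\setminus\{b_1\})$ already surrounds the origin, contradicting minimality. Given injectivity, if some $B' \subsetneq B$ had $\pi(B')$ surrounding the origin in $\Eorth$, then by (i) applied to $A \cup B'$ this smaller family would surround the origin, again contradicting minimality.

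For (iii), pick $q = \sum_b \beta_b b \in \conv(B)\cap E$ from the hypothesis; the assumption that no proper subset of $B$ meets $E$ in its convex hull forces every $\beta_b > 0$. Since $-q\in E$, criticality of $A$ gives $-q = \sum_a \mu_a a$ with $\mu_a \geq 0$, so after normalization the origin is a strict convex combination of $B \cup A'$ with $A' = \{\,a\in A\mid \mu_a>0\,\}$. If $B \cup A'$ is affinely dependent, I perform the standard exchange argument to drive one coefficient to zero; crucially, the coefficient killed must be some $\mu_a$, because dropping some $b \in B$ would leave a strict convex combination of $B \setminus \{b\}$ lying in $E$ (the remaining $A$-part lies in $E$), contradicting the hypothesis on $B$. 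Iterating produces an affinely independent subset of $A \cup B$ containing $B$ with origin in the relative interior of its convex hull, i.e., a critical simplex of $A\cup B$ containing $B$. The main obstacle I anticipate is precisely this affine-independence step in (iii): the exchange must be set up so that dropped coordinates always come from $A'$, never from $B$. Parts (i) and (ii) are then straightforward Carath\'eodory-style bookkeeping on top of it.
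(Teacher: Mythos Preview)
Your treatments of (i) and (ii) coincide with the paper's. The injectivity step you spell out in (ii) is a harmless elaboration: the paper's one-line contrapositive already covers it, since $\pi(b_1)=\pi(b_2)$ makes $\pi(B\setminus\{b_1\})=\pi(B)$, and then (i) applied to $B\setminus\{b_1\}$ yields the contradiction directly.

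For (iii) the paper takes a different and somewhat cleaner route. It picks $x\in\conv(B)\cap E$, observes that the cones $\conv(F\cup\{x\})$ over the facets $F$ of the simplex $A$ cover $\conv(A)\ni 0$, hence $0\in\conv(F\cup B)$ for some \emph{proper} facet $F$. One application of Carath\'eodory (the ``there exists a critical simplex'' lemma) inside $F\cup B$ produces a critical simplex $T$, and a single splitting $0=\alpha y+\beta z$ with $y\in\conv(F)$, $z\in\conv(B\cap T)$ finishes the job: the key is that $0\notin\conv(F)$ because $F$ is a proper face of the critical simplex $A$, forcing $\beta>0$, hence $z\in E$, hence $\conv(B\cap T)\cap E\neq\emptyset$, hence $B\cap T=B$ by hypothesis. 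No iteration is needed.

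Your iterative exchange also works in spirit, but the step you yourself flagged is not yet closed. When a $\beta_b$ hits zero, you assert that the surviving $B$-part gives a point of $\conv(B\setminus\{b\})$ in $E$. This fails in the degenerate case where \emph{all} $B$-coefficients vanish at the same critical $t$: then the surviving $B$-part is the empty combination, there is nothing to normalize, and you are left only with $0\in\conv(A')$, which (forcing $A'=A$) contradicts nothing about $B$. The repair is easy---either choose the representation $-q=\sum_a\mu_a a$ with minimal support so that $A'\subsetneq A$ from the start (then $0\in\conv(A')$ is itself a contradiction), or note that in this degenerate case $\sum_b\gamma_b>0$ forces some $\gamma_a<0$, so the exchange in the opposite $t$-direction can only kill an $A'$-coefficient---but your proposal does not supply it. The paper's facet trick sidesteps this entirely by guaranteeing $0\notin\conv(F)$ up front.
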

\begin{proof}
(i) Let $\pi$ denote the orthogonal projection
  on~$\Eorth$.  If $A\cup B$ surrounds the origin, then every point $y
  \in E^{\perp} \subset \R^{d}$ can be written as $y = \sum_{x \in A
    \cup B} \lambda_{x}x$ with $\lambda_{x} \geq 0$.  Since $\pi(x) =
  0$ for $x\in A$, we have
  \[
  y = \pi(y) = {\sum_{x\in A\cup B}} \lambda_x \pi(x) 
  = {\sum_{x\in B}} \lambda_{x} \pi(x),
  \]
  so $\pi(B)$ surrounds the origin in~$\Eorth$.
  
  Assume now that $\pi(B)$ surrounds the origin in~$\Eorth$, and
  consider a point $y\in \R^{d}$.  Since $\pi(y) \in \Eorth$, we can
  write $\pi(y) = \sum_{x \in B}\lambda_{x}\pi(x)$ with $\lambda_{x}
  \geq 0$.  Let $w = y - \pi(y) + \sum_{x\in B}\lambda_{x}\big(\pi(x) -
  x\big)$. Since $w \in E$, we can express $w$ as $w = \sum_{x\in A}\mu_{x} x$
  with~$\mu_{x} \geq 0$.  Thus
  \[
  y = [y -\pi(y)] + \pi(y) 
  = %y -\pi(y)
w - \sum_{x\in B}\lambda_{x}(\pi(x) - x)
 + {\sum_{x \in B}}\lambda_{x}\pi(x) 
%  = w + \smashoperator{\sum_{x\in B}} \lambda_{x}x
  = \sum_{x \in A}\mu_x x + \sum_{x \in B}\lambda_x x, 
  \]
  and $A \cup B$ surrounds the origin.
  
(ii) %, we observe that if some proper subset $C$ of $B$ is
If $B$ has a proper subset $C$
  such that $\pi(C)$ already surrounds the origin in $\Eorth$ then
  $A\cup C$ surrounds the origin in $\R^d$ by claim~(i). Thus, if
  $A\cup B$ minimally surrounds the origin in $\R^d$ then $\pi(B)$
  minimally surrounds the origin in~$\Eorth$.

  (iii) Let $\mathcal{A}$ be the set of facets of $A$, and
  let $x$ be an arbitrary point in~$\conv(B) \cap E$.  Then
  $\bigcup_{F\in \mathcal{A}}\conv(F \cup \{x\})$ covers~$\conv(A)$,
  and so $0 \in \conv(F \cup \{x\})$ for some facet $F$ of~$A$. This
  implies $0 \in \conv(F \cup B)$, so by Lemma~\ref{lem:have-critical}
  some simplex $T \subset B \cup F$ is critical.
We will conclude the proof by showing that $B \subset T$.
 Let $X = B \cap T$.
  Since $0 \in \conv(T) \subset \conv(F \cup X)$, we can write $0 =
  \alpha y + \beta z$, where $\alpha, \beta \geq 0$, $y \in
  \conv(F)$ and $z \in \conv(X)$. Since $0 \not\in \conv(F)$ we must
  have $\beta z \neq 0$.  From $0 \in E$ and $\alpha y \in E$, we have
  $\beta z \in E$, which implies $z \in E$, and so $\conv(X) \cap E
  \neq \emptyset$.  By assumption, we now have $X = B$ and
  therefore~$B \subset T$.
\end{proof}

We will now describe any minimal set surrounding the origin in up to
four dimensions as the union of critical simplices. As a warm-up
exercise, we state without proof the following lemma about minimal
sets surrounding the origin in one or two dimensions, since this will
be used in the proof for four dimensions. (The lemma can easily be
proven directly, or along the lines of the proof of
Theorem~\ref{thm:surrounding} below.)
\begin{lemma}
  \label{lem:surrounding-1d2d}
  A point set that minimally surrounds the origin in~$\R^{1}$ consists
  of a positive and a negative point.  A point set that minimally
  surrounds the origin in~$\R^{2}$ is either a triangle with the
  origin as an interior point, or a convex quadrilateral with the
  origin as the intersection point of the diagonals.
\qed
\end{lemma}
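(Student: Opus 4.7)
The plan is to use the structural machinery developed in Lemmas~\ref{lem:have-critical}, \ref{lem:overlap}, and~\ref{lem:decomposition} to reduce the $\R^{2}$ case to the one-dimensional case, which itself is immediate: a set surrounds the origin in $\R$ if and only if its convex hull contains an open interval around $0$, which happens if and only if the set has at least one strictly positive and one strictly negative element; minimality forces exactly one of each.

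For $\R^{2}$, I would first apply Lemma~\ref{lem:have-critical} to extract a critical simplex $X \subseteq S$. If $X$ is a critical triangle, then $X$ alone surrounds the origin in $\R^{2}$, so minimality forces $S = X$, giving the first alternative. Otherwise $X = \{a,b\}$ is a critical segment spanning a line $L = \ah{X}$ with $0$ strictly between $a$ and $b$. Lemma~\ref{lem:overlap}(i) ensures that no point of $S \setminus X$ lies on $L$, and Lemma~\ref{lem:decomposition}(ii) shows that the orthogonal projection of $S \setminus X$ onto $L^{\perp} \cong \R$ minimally surrounds the origin. The one-dimensional case then forces $S \setminus X = \{c,d\}$, with $c$ and $d$ strictly on opposite sides of $L$.

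The main obstacle is to show that $c$, $d$, and $0$ are collinear, so that $cd$ is a second diagonal through the origin. For this I would invoke Lemma~\ref{lem:decomposition}(iii) with $A = \{a,b\}$ and $B = \{c,d\}$: the segment $cd$ meets $L$ because $c$ and $d$ lie on opposite sides of it, while every proper subset of $\{c,d\}$ has convex hull disjoint from $L$. The lemma thus produces a critical simplex $T \subseteq S$ with $\{c,d\} \subseteq T$. A critical $3$-simplex is impossible for dimensional reasons in $\R^{2}$. If $T$ were a critical triangle such as $\{a,c,d\}$ or $\{b,c,d\}$, then $T$ alone would surround the origin in $\R^{2}$, contradicting the minimality of $S$ since the fourth point would be redundant. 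The only remaining possibility is that $T = \{c,d\}$ is a critical segment, i.e., $0$ lies strictly between $c$ and $d$. Hence $S$ consists of two pairs lying on two distinct lines through the origin with the origin strictly between the two points of each pair, which is precisely a convex quadrilateral with $0$ as the intersection of its diagonals.
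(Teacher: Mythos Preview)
Your argument is correct. The paper does not actually prove Lemma~\ref{lem:surrounding-1d2d}; it is stated with a \qed\ and the remark that it ``can easily be proven directly, or along the lines of the proof of Theorem~\ref{thm:surrounding} below.'' Your proof is precisely the second of these suggested routes: you apply Lemmas~\ref{lem:have-critical}, \ref{lem:overlap}(i), and \ref{lem:decomposition}(ii)--(iii) in just the same way the paper later deploys them in the four-dimensional classification, so there is nothing to compare against beyond confirming that your execution is sound---which it is.

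One cosmetic point you might add for completeness: when you say ``otherwise $X=\{a,b\}$ is a critical segment,'' you are tacitly excluding the possibility that Lemma~\ref{lem:have-critical} returns the critical $0$-simplex $\{0\}$. This is harmless, since a minimal surrounding set in $\R^{d}$ with $d\geq 1$ cannot contain the origin (were $0\in S$, it could not be a vertex of $\conv(S)$, so $\conv(S\setminus\{0\})=\conv(S)$ would still surround the origin, contradicting minimality), but a one-line remark to this effect would make the case split airtight.
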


\begin{figure}[!htb]
  \centerline{\includegraphics{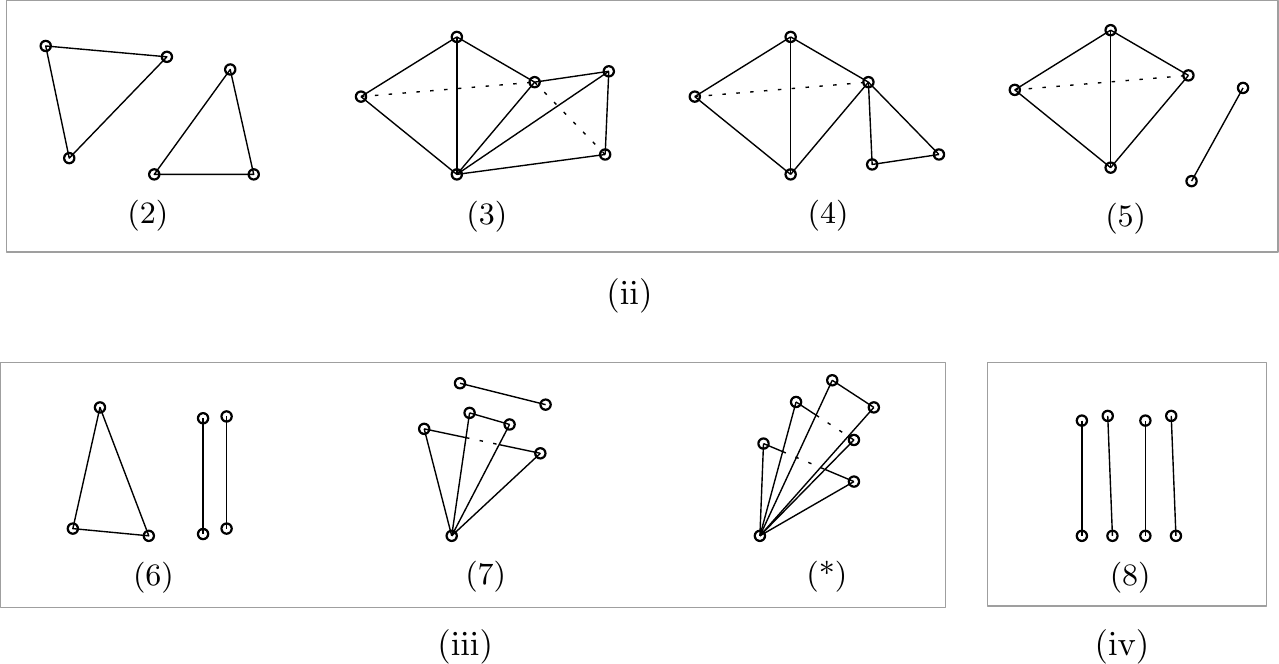}}
  \caption{Combinatorial description of nongeneric minimal sets of
    points surrounding the origin in $\R^4$
according to Theorem~\ref{thm:surrounding}.}
  \label{fig:surrounding}
\end{figure}
We now turn to point sets minimally surrounding the origin
in~$\R^{4}$.  The generic case~(1) is a $4$-simplex surrounding the
origin; the remaining cases~(2)--(8) are depicted in diagram form in
Figure~\ref{fig:surrounding}.  The case~(*) has a special label as we
will see in Theorem~\ref{thm:char-ortho} that it cannot be realized by
normals of orthogonal constraints.  Note that we are not claiming that
the critical simplices shown are \emph{all} critical simplices of the
point set (although we are not aware of a situation that has
additional critical simplices). 
\begin{theorem} \label{thm:surrounding} 
 A set $S$ minimally surrounds the origin in $\R^4$
 if and only if the linear hull of $S$ is $\R^4$ and one of the
 following holds: (i) $|S|=5$ and $S$ is a critical $4$-simplex, or
 (ii) $|S|=6$ and $S$ is the union of two critical simplices, each of
 dimension at most three, or (iii) $|S|=7$ and $S$ is the union of three
 critical simplices: $k\geq 1$ critical triangles having a single
 point in common and $3-k$ disjoint critical segments, or (iv) $|S|=8$
 and $S$ is the disjoint union of four critical segments.
\end{theorem}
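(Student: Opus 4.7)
The plan is to prove both implications, with Lemma~\ref{lem:decomposition} as the central tool. The argument is naturally inductive on dimension: it relies on the analogous classifications in lower dimensions, furnished by Lemma~\ref{lem:surrounding-1d2d} in dimensions~$1$ and~$2$, and by the same inductive scheme in dimension~$3$.

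For the \emph{if} direction I would verify each configuration (i)--(iv) directly. Case~(i) is immediate: a critical $4$-simplex has the origin in the interior of its convex hull, and removing any of its five vertices leaves four affinely independent points whose convex hull has empty interior in~$\R^{4}$. For cases~(ii)--(iv), pick one of the critical simplices $A$ from the decomposition and set $B=S\setminus A$; then $\pi(B)$ is the union of the remaining critical simplices, living in~$\Eorth$, and Lemma~\ref{lem:decomposition}(i) yields that $S$ surrounds the origin. For minimality, removing a vertex $v$ of $S$ either shrinks $A$ below its critical size---collapsing $S$ into a lower-dimensional span via Lemma~\ref{lem:decomposition}(i)---or shrinks one of the other critical simplices, which by recursion destroys the surrounding property of $\pi(B)$.

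For the \emph{only if} direction, Steinitz's theorem gives $|S|\le 8$, while $S$ surrounding the origin in $\R^{4}$ forces $|S|\ge 5$ (a spanning set with origin interior to its hull needs at least $d+1$ points). Lemma~\ref{lem:have-critical} produces a critical simplex $A\subseteq S$; if $\dim A=4$ then minimality forces $S=A$ and we are in case~(i). Otherwise, with $E=\ah{A}$, $B=S\setminus A$, and $\pi\colon\R^{4}\to\Eorth$, Lemma~\ref{lem:overlap}(i) gives $B\cap E=\emptyset$, and minimality forces $\pi$ to be injective on $B$, since two colliding projections would make one preimage removable by Lemma~\ref{lem:decomposition}(i). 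Lemma~\ref{lem:decomposition}(ii) then says that $\pi(B)$ minimally surrounds the origin in~$\Eorth$; invoking the lower-dimensional classification, write $\pi(B)$ as a union of critical simplices $\sigma_{1},\dots,\sigma_{m}$. Each lifts injectively to $\sigma_{j}^{*}\subseteq B$, and since $\conv(\sigma_{j}^{*})$ projects onto $\conv(\sigma_{j})\ni 0$ while every proper subset of $\sigma_{j}^{*}$ projects to a proper subset of $\sigma_{j}$ (whose hull avoids the origin by criticality of $\sigma_{j}$), Lemma~\ref{lem:decomposition}(iii) places $\sigma_{j}^{*}$ inside a critical simplex $T_{j}$ of~$S$. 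Choosing $A$ of maximum possible dimension from the outset should ensure $T_{j}=\sigma_{j}^{*}$, so $S=A\cup T_{1}\cup\dots\cup T_{m}$ is indeed a union of critical simplices.

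The main obstacle will be pinning down the precise intersection pattern of the critical simplices in the final decomposition: that in case~(iii) the triangles share exactly one common point (neither more nor none), and that in case~(iv) the four segments are pairwise disjoint. Lemma~\ref{lem:overlap}(ii) is the key tool here, since two critical simplices sharing too many vertices must coincide, while Lemma~\ref{lem:overlap}(i) forbids a vertex of one critical simplex from lying in the linear hull of another. A careful case analysis by the dimension~$k$ of~$A$, combined with counting $|S|=|A|+|B|$ and comparing with the enumerated lower-dimensional decompositions of $\pi(B)$, should rule out every possibility except those listed in (ii)--(iv).
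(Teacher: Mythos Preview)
Your outline follows the paper's strategy closely---take a critical simplex $A$ of maximum cardinality, project $B=S\setminus A$ onto $\Eorth$, invoke the lower-dimensional classification there, and lift back via Lemma~\ref{lem:decomposition}(iii)---but two of your steps fail as stated.

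\textbf{Only-if direction.} The claim that maximality of $A$ forces $T_j=\sigma_j^*$ is false: maximality gives only $|T_j|\le |A|$, not $|T_j|=|\sigma_j^*|$. When $|A|=3$ and $\pi(B)$ is a critical quadrilateral, the two lifted segments $B_1,B_2$ may each sit inside a critical \emph{triangle} $T_j=B_j\cup\{a_j\}$ with $a_j\in A$. The paper resolves this by an extra argument you are missing: if both $T_j$ are triangles and $a_1\neq a_2$, then $\ah{T_1}\cap\ah{T_2}=\{0\}$ and Lemma~\ref{lem:decomposition}(i) shows that $T_1\cup T_2\subsetneq S$ already surrounds the origin, contradicting minimality; hence $a_1=a_2$ and one obtains three critical triangles through a single common point. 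So the ``intersection pattern'' you flag as the main obstacle is not a separate check after establishing $T_j=\sigma_j^*$---it is exactly what replaces that false equality. The same phenomenon occurs when $|A|=4$: the critical simplex $T\supseteq B$ containing the lifted segment may have size $3$ or $4$ and overlap $A$ (cases~(3) and~(4) of Figure~\ref{fig:surrounding}).

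\textbf{If direction, minimality.} Your argument does not work when the removed vertex $p$ lies in two of the critical simplices, say $A$ and $B$. Because $A$ is critical, $p$ is a positive combination of $A\setminus\{p\}$, so $\ah{A\setminus\{p\}}=\ah{A}$ and the span of $S\setminus\{p\}$ does not collapse; nor can you invoke Lemma~\ref{lem:decomposition}(i), since neither $A\setminus\{p\}$ nor $B\setminus\{p\}$ is critical. The paper instead gives a direct geometric argument: the ray from $0$ in the direction $-p$ meets the relative interiors of both $\conv(A\setminus\{p\})$ and $\conv(B\setminus\{p\})$, and from this one builds a hyperplane through the origin separating $S\setminus\{p\}$ from the origin (see Figure~\ref{fig:minimality}). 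Case~(iii) needs a similar but slightly more elaborate separation argument.
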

\begin{proof}
 We first prove  that % the condition is necessary. 
these cases exhaust all possibilities.
Let $S$ be a minimal
 set of points that surrounds~$0$. By Lemma~\ref{lem:have-critical},
 some simplex of $S$ is critical. Let $A$ be such a simplex with
 maximal cardinality, and denote by $B = S \setminus A$ the remaining
 points of~$S$. Let $E= \ah{A}$ denote the linear hull of $A$ and
 $\Eorth$ the orthogonal complement of~$E$. Let $G$ be the
 \emph{affine} hull of~$B$. The linear hull of $E \cup G$ is $\R^4$,
 as otherwise $S$ cannot surround the origin. We consider various
 cases depending on the cardinality of~$A$.

 If $|A| = 5$, then $A$ surrounds the origin and by minimality of $S$
 we are in case~(i).

 If $|A| = 4$, then $E$ is a 3-space and $\Eorth$ is a line.  By
 Lemmas~\ref{lem:decomposition}~(ii) and~\ref{lem:surrounding-1d2d},
 $B$~consists of exactly two points, one on each side of~$E$. Since $B
 \cap E = \emptyset$ and $\conv(B)$ meets~$E$,
 Lemma~\ref{lem:decomposition}~(iii) implies that $B$ is contained in
 some critical simplex $T \subset S$. Since $A$ was chosen with
 maximal cardinality, we have $|T| \leq 4$, and we are in case~(ii)
 (cases~(3)--(5) of Figure~\ref{fig:surrounding}).

 If $|A| = 2$, any critical simplex of $S$ has size exactly two. It is
 easy to see that there must be exactly four critical segments, and we
 are in case~(iv) (case~(8) of Figure~\ref{fig:surrounding}).

 It remains to deal with the case where $|A| = 3$, so that $E$ is a
 $2$-space. Lemmas~\ref{lem:decomposition}~(ii)
 and~\ref{lem:surrounding-1d2d} imply that $B$ consists of three or
 four points.

 If $|B|=3$, then $B$ is a triangle and $G$ is a $2$-flat that
 intersects~$E$ in a single point~$x$ interior to $\conv(B)$. Since no
 edge of the triangle~$B$ meets~$E$,
 by Lemma~\ref{lem:decomposition}~(iii) $B$ is contained in
 a critical simplex~$T$ of~$S$. Since $|T| \leq 3$ we have $T=B$ and
 $S$ is the disjoint union of two critical triangles. We are thus in
 case~(ii) (case~(2) of Figure~\ref{fig:surrounding}).

 If $|B|=4$, then by Lemma~\ref{lem:surrounding-1d2d} the orthogonal
 projection of $B$ on $\Eorth$ consists of two critical
 segments. Thus, $B$ consists of two disjoint pairs, say $B_1$ and
 $B_2$, whose convex hulls intersect~$E$.  By
 Lemma~\ref{lem:decomposition}~(iii), each $B_i$ is contained in a
 critical simplex of~$S$. Since $A$ is of maximal cardinality, we have
 that $B_i$ either is a critical segment or is contained in a critical
 triangle $T_i = B_i \cup \{a_i\}$.  If at least one $B_i$ is a
 critical segment, then we are in case~(iii) (case~(6) or~(7) of
 Figure~\ref{fig:surrounding}).

 Assume finally that both $B_i$ are contained in a critical
 triangle. If $a_1 \neq a_2$, then $\ah{\{a_1,a_2\}} = E$ and $\ah{T_1
   \cup T_2}=\R^4$.
Since $\ah{T_1}$ and $\ah{T_2}$ are two-dimensional, $\ah{T_1} \cap \ah{T_2} = \{0\}$. It follows
 that the orthogonal projection of $T_2$ on $\ah{T_1}^{\perp}$
is two-dimensional and therefore
 surrounds the origin in that
 subspace. Lemma~\ref{lem:decomposition}~(i) then implies that $T_1
 \cup T_2 \subsetneq S$ surrounds the origin, contradicting the minimality
 of $S$. Hence, $a_1=a_2$ and $S$ is the union of three critical
 triangles with exactly one common point, and we are in case~(iii)
 (case~(*) of Figure~\ref{fig:surrounding}).

 \bigskip
 We now turn to the converse.  We first argue that every point set~$S$
 according to one of the types~(i)--(iv) % of the theorem
 must surround the origin because
 $\ah{S} = \R^{4}$ and $S$ is the union of critical simplices:
  If $S$ does not surround the origin, then there
 is a closed halfspace~$h$ through the origin such that $S \subset h$.
 Since $\ah{S} = \R^{4}$, there must be a point $p \in S$ in the
 interior of~$h$.  But then the critical simplex $T \subset S$
 containing~$p$ cannot surround the origin in~$\ah{T}$,
a contradiction.

 We proceed to argue the minimality of~$S$:
% if $\ah{S}=\R^4$ and $S$
% is of type~(i)--(iv), then we have just shown that $S$ surrounds the
% origin---it remains to argue that 
no proper subset of $S$ %does.  
 surrounds the origin.
We
 distinguish the four cases of the theorem.

 In case~(i), $|S| = 5$ and $S$ is a critical $4$-simplex. By
 definition it surrounds the origin and none of its faces does.

 In case~(iv), $|S| = 8$ and $S$ is the disjoint union of four
 critical segments.  Since $\ah{S} = \R^4$, the directions of these
 segments are linearly independent, and so $S$ minimally surrounds the
 origin.

 In case~(ii), $|S| = 6$. Assume that there is a subset $R \subsetneq
 S$ that surrounds the origin.  Since $|R| \leq 5$, our necessary
 condition implies that $R$ is a 4-simplex. Let $p$ be the point in
 $S$ not in~$R$. Since $S$ is of type~(ii), it can be written as $S=A
 \cup B$ where $A$ and $B$ are critical simplices of dimension at most
 $3$. Since neither $A$ nor $B$ can be contained in $R$, they both
 contain $p$ and are not disjoint. Without loss of generality, we can
 assume that $A$ is a tetrahedron and $B$ is a triangle or a
 tetrahedron. Let $A'=A\setminus \{p\}$ and $B'=B \setminus \{p\}$ and
 consider the ray $r$ originating in $0$ with
 direction~$\vec{p0}$. Since $A=A'\cup \{p\}$ is a critical simplex,
 $r$ intersects the relative interior of $\conv(A')$ in a point~$x_A$;
 similarly, $r$ intersects the relative interior of $\conv(B')$ in a
 point~$x_B$. Put $E=\ah{A}$ and consider the situation in the
 $3$-space $E$ (see Figure~\ref{fig:minimality}).
 \begin{figure}[!htb]
   \centerline{\includegraphics{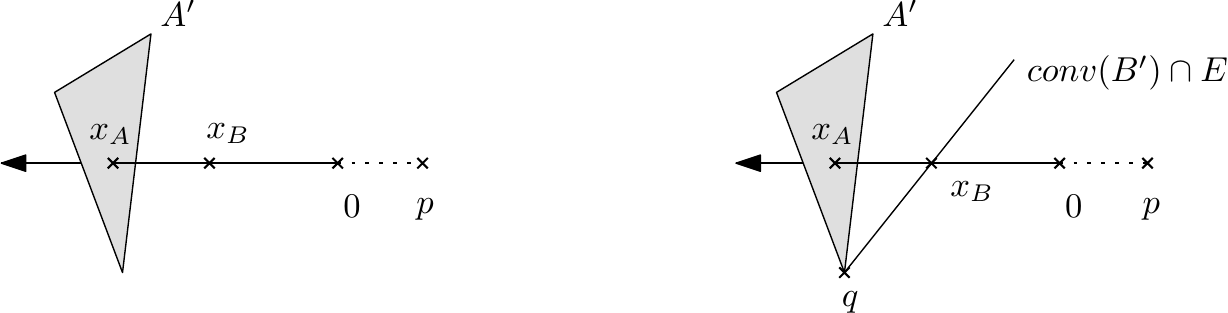}}
   \caption{The situation in $E$ in the case where $x_B$ is closer to
     $0$ than $x_A$ 
     (the other case is symmetric): (left) when $B$ is a triangle,
     (right) when $B$ is a tetrahedron.}  \label{fig:minimality}
 \end{figure}
 There exists a $2$-plane $\Pi$ that strictly separates, in $E$, $A'
 \cup \{x_B\}$ from the origin. The affine hull of $B'$ is either a
 line intersecting $E$ in $x_B$ (if $B$ is a triangle) or a $2$-plane
 intersecting $E$ in the line $qx_B$ (if $B$ is a tetrahedron), where
 $\{q\}=A' \cap B'$. (Indeed, if the intersection of the affine hull
 of $B'$ with $E$ has higher dimension, then $B' \subset E$ and
 $E=\ah{R}$, contradicting our assumption that $R$ surrounds the
 origin.) In either case, there exists a $3$-space in $\R^4$ that
 %is parallel to the affine hull of $B'$ and intersects $E$ in $\Pi$
 %and 
 strictly separates the origin from $A' \cup B' = R$, so $R$
 cannot surround the origin, completing the argument in case~(ii).

 Finally, in case~(iii) $|S| = 7$.  Then $S$ can be written as $S= A
 \cup B \cup C$, where $\{A,B,C\}$ consists of $k\geq 1$ critical
 triangles sharing exactly one vertex~$p$ and $3-k$ disjoint critical
 segments. Assume that there exists a proper subset $R$ of $S$ that
 minimally surrounds the origin.  First consider the case where $p \in
 R$, and note that this implies that any point in $S \setminus R$
 belongs to a unique simplex from $\{A,B,C\}$. If $R$ has size~$5$ it
 contains one critical simplex from $\{A,B,C\}$, and that contradicts
 our necessary condition that $R$ be a critical $4$-simplex. Thus $R$
 must have size $6$ and contains two critical simplices from
 $\{A,B,C\}$, say $A$ and $B$. Our necessary condition implies that
 $R$ can be written $R=X \cup Y$ where $X$ and $Y$ are critical
 simplices of dimension at most $3$. Since $|A| \leq 3$, $X$ or $Y$
 must contain $|A|-1$ points from~$A$.  Without loss of generality let
 this be~$X$.  Lemma~\ref{lem:overlap}~(ii) implies that $X=A$. Then
 $Y$ contains all points from $B$ except possibly~$p$, and so $Y=B$ by
 Lemma~\ref{lem:overlap}~(ii). Since $A \cup B$ has size at most $5$,
 whereas $X \cup Y = R$ has size $6$, we get a contradiction.  Now
 consider the case where $p \not\in R$. Let $r$ be the ray originating
 in $0$ with direction vector $\vec{p0}$. Let $Z \in \{A,B,C\}$ and
 observe that $Z'=Z \cap R$ is either a single point, a critical
 segment, or a non-critical segment. Moreover, if $Z'$ is a
 non-critical segment then $Z=Z' \cup \{p\}$ is a triangle and since
 $Z$ is critical, $r$ intersects the relative interior of
 $\conv(Z')$. Let $G$ be a $3$-space passing through $0$, parallel to
 the segments in $\{A',B',C'\}$ and passing through the points in
 $\{A',B',C'\}$ (if any). The halfspace in $\R^4$ bounded by $G$ and
 containing $r$ has $0$ on its boundary and contains $R$. This implies
 that $R$ cannot surround $0$, a contradiction.
\end{proof}

Finally, the minimally surrounding sets in~$\R^{3}$ can easily be
obtained from Theorem~\ref{thm:surrounding}.
\begin{lemma}
  \label{lem:surrounding-3d}
  If a set~$S\subset \R^{3}$ minimally surrounds the origin
  in~$\R^{3}$ then either (i)~$S$ is a critical tetrahedron, or
  (ii)~$S$ is the union of two critical triangles sharing one point,
  or (iii)~$S$ is the disjoint union of a critical triangle and a
  critical segment, or (iv)~$S$ consists of three disjoint critical
  segments.
\end{lemma}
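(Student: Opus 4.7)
The plan is to mirror the case analysis of the proof of Theorem~\ref{thm:surrounding}, one dimension lower. By Lemma~\ref{lem:have-critical}, $S$ contains a critical simplex; let $A$ be such a simplex of \emph{maximal} cardinality, and set $B = S \setminus A$, $E = \ah{A}$, and let $\Eorth$ denote the orthogonal complement of $E$ in $\R^{3}$. A critical $0$-simplex would have to be $\{0\}$, which by Lemma~\ref{lem:overlap}(i) would force $S = \{0\}$, contradicting $\ah{S} = \R^{3}$; hence $2 \leq |A| \leq 4$. The four outcomes (i)--(iv) will arise from $|A| \in \{4,3,2\}$, with $|A|=3$ producing both (ii) and (iii).

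If $|A| = 4$, then $A$ already surrounds the origin, so by minimality $S = A$, giving (i). If $|A| = 3$, then $E$ is a $2$-space and $\Eorth$ a line; Lemma~\ref{lem:decomposition}(ii) combined with Lemma~\ref{lem:surrounding-1d2d} forces the orthogonal projection of $B$ onto $\Eorth$ to consist of two points, one on each side of the origin, so $B$ is a pair of points straddling $E$. The segment $\conv(B)$ crosses $E$ while neither endpoint lies on $E$, so Lemma~\ref{lem:decomposition}(iii) supplies a critical simplex $T \subseteq S$ with $B \subseteq T$. Maximality of $|A|$ forces $|T| \leq 3$, so either $T = B$ (a critical segment disjoint from the critical triangle $A$, yielding (iii)) or $T = B \cup \{a\}$ for some $a \in A$ (a critical triangle meeting $A$ exactly in the vertex~$a$, yielding (ii)).

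The remaining case $|A| = 2$ requires a short separate analysis: $E$ is a line, $\Eorth$ a plane, and Lemma~\ref{lem:surrounding-1d2d} implies that the projection of $B$ onto $\Eorth$ is either a triangle or a convex quadrilateral minimally surrounding the origin. In the triangle subcase, the convex combination certifying that the origin is interior to the projected triangle lifts to a point in $\conv(B) \cap E$ lying in the relative interior of~$B$; since $\ah{S}=\R^{3}$ forces $E\not\subseteq\ah{B}$, the intersection $\ah{B}\cap E$ is exactly this single interior point, so no proper subset of $B$ meets $E$, and Lemma~\ref{lem:decomposition}(iii) produces a critical simplex containing all three points of $B$, contradicting $|A|=2$. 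In the quadrilateral subcase, $B$ splits into two disjoint pairs $B_1 \cup B_2$ whose convex hulls (the two diagonals of the projected quadrilateral) cross $E$; applying Lemma~\ref{lem:decomposition}(iii) to each pair and invoking maximality of~$|A|$ forces each $B_i$ to itself be a critical segment disjoint from $A$, yielding (iv).

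The only delicate point is verifying the hypothesis of Lemma~\ref{lem:decomposition}(iii) in the $|A|=2$ subcases, namely that no proper subset of $B$ (respectively of $B_i$) already meets $E$. This ultimately reduces to observing that the origin sits in the \emph{relative} interior of each projected critical simplex, and it is the only geometric input beyond the case bookkeeping. Since the statement only asks for the necessary direction, once the three cases above are exhausted the classification is complete.
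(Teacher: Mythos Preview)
Your argument is correct in outline but takes a genuinely different route from the paper. The paper's proof is a two-line reduction: a set $S$ minimally surrounds the origin in $\R^3$ if and only if $S$ together with an added critical segment in a transverse direction minimally surrounds the origin in $\R^4$; one then reads off from Theorem~\ref{thm:surrounding} (Figure~\ref{fig:surrounding}) all configurations that contain a critical segment and deletes that segment, leaving exactly cases (5)--(8), which become (i)--(iv) of the lemma. Your approach instead reruns the case analysis of Theorem~\ref{thm:surrounding} one dimension lower. This is more self-contained and does not rely on the $\R^4$ result, at the cost of being longer; the paper's route is slicker precisely because the heavy lifting was already done in dimension four.

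Two minor points of justification deserve tightening. In the triangle subcase of $|A|=2$, the clause ``$\ah{S}=\R^{3}$ forces $E\not\subseteq\ah{B}$'' does not do what you want: with $\ah{\cdot}$ the linear hull (the paper's convention), $\ah{B}$ may well be all of $\R^3$, and no contradiction results. The clean way to verify the hypothesis of Lemma~\ref{lem:decomposition}(iii) is direct: for any proper $X\subsetneq B$, $\conv(\pi(X))$ is a vertex or edge of the projected triangle and hence misses the origin, so $\conv(X)\cap E=\emptyset$. Similarly, excluding $0\in S$ via Lemma~\ref{lem:overlap}(i) is not quite the right lever (that lemma gives a vacuous conclusion here); simply observe that if $0\in S$ then $0$ is not a vertex of $\conv(S)$, so $S\setminus\{0\}$ already surrounds the origin, contradicting minimality. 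Neither issue affects the validity of the overall proof.
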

\begin{proof}
  $S$ minimally surrounds the origin in~$\R^{3}$ if and only if $S$
  can be extended to a point set minimally surrounding the origin
  in~$\R^{4}$ by adding a single critical segment in a transverse
  direction.  We thus obtain all possible configurations from
  Figure~\ref{fig:surrounding}, by taking all configurations
  containing a critical segment, and deleting it.  Cases~(5),~(6),~(7),
  and~(8) of Figure~\ref{fig:surrounding} are possible, and lead to
  the four cases of the lemma.
\end{proof}

%%%%%%%%%%%%%%%%%%%%%%%%%%%%%%%%%%%%%%%%%%%%%%%%%%%%%%%%%%%%%%%%%%%%%%%%%%%%%

\subsection{Critical simplices formed by constraints}

We now discuss how critical simplices can be formed by the normal
vectors of a family of constraints.  A necessary condition for a
family of normals to form a critical simplex is that they be linearly
dependent, but that any proper subset be linearly independent. 

We call a family of constraints \emph{dependent} if their normal
vectors are linearly dependent.  Our first step in rewriting
Theorem~\ref{thm:surrounding} in terms of constraints is to give a
geometric interpretation of dependent families of constraints.

We say that four or more lines $\ell_1, \ldots, \ell_k$ are in
\emph{hyperboloidal position} if they belong to the same family of
rulings of a nondegenerate quadric surface.  In the case of orthogonal
constraints, which are parallel to a common plane, that quadric
surface is always a hyperbolic paraboloid.
\begin{lemma}
  \label{lem:ortho-dependent}
  Two orthogonal constraints are dependent if and only if they % two constraints
 are identical except possibly for their orientation.
  
  Three orthogonal constraints are dependent if and only if (i) two of
  them are, or if (ii) the constraints are coplanar with~$\ell_{0}$,
  or if (iii) the constraints are concurrent with~$\ell_{0}$.

  Four orthogonal constraints are dependent if and only if (i) three of them are,
  or if (ii) the constraints are in hyperboloidal position, or if
  (iii) two of the lines are concurrent with~$\ell_{0}$ and the other
  two are coplanar with~$\ell_{0}$.
\end{lemma}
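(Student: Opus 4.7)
The plan is to exploit the factorization $\NORM(\lambda,\alpha) = ((1-\lambda)a,\lambda a)$ with $a = a(\alpha) = (\sin\alpha, -\cos\alpha)$. This reveals that every orthogonal constraint normal lies both in the $2$-plane $P_\alpha = \{(sa,ta) : s,t \in \R\}$, which depends only on $\alpha \bmod \pi$, and in the $2$-plane $Q_\lambda = \{((1-\lambda)v, \lambda v) : v \in \R^2\}$, which depends only on $\lambda$. A direct check gives $P_{\alpha_1} \cap P_{\alpha_2} = \{0\}$ when $\alpha_1 \not\equiv \alpha_2 \pmod \pi$, and $Q_{\lambda_1} \cap Q_{\lambda_2} = \{0\}$ when $\lambda_1 \neq \lambda_2$. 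The two-constraint case is then immediate: $\NORM_1 \parallel \NORM_2$ forces both $\alpha_1 \equiv \alpha_2 \pmod \pi$ and $\lambda_1 = \lambda_2$, so $g_1$ and $g_2$ coincide as unoriented lines.

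For three constraints, case (i) is trivial, so I assume pairwise independence. A nonzero relation $\sum_{i=1}^{3} c_i \NORM_i = 0$ splits into two $\R^2$-equations $\sum c_i (1-\lambda_i) a_i = 0$ and $\sum c_i \lambda_i a_i = 0$. If two of the $a_i$'s are linearly independent in $\R^2$, say $a_1$ and $a_2$, writing $a_3 = \mu_1 a_1 + \mu_2 a_2$ (necessarily with $\mu_1, \mu_2 \neq 0$, else some two normals are dependent) and comparing coefficients forces $\lambda_1 = \lambda_2 = \lambda_3$, which is case~(iii). Otherwise all three $a_i$'s are parallel, which is case~(ii). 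Conversely, in case~(ii) the three normals lie in a common $P_\alpha$, and in case~(iii) they lie in a common $Q_\lambda$, both $2$-dimensional, so dependence is automatic.

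For four constraints, (i) is again trivial. Otherwise no three normals are dependent, so the three-constraint result forbids three of the $\alpha_i \bmod \pi$'s or three of the $\lambda_i$'s from coinciding; the multiplicity partitions of $\{\alpha_i \bmod \pi\}$ and $\{\lambda_i\}$ therefore each lie in $\{(1,1,1,1),(2,1,1),(2,2)\}$. A short case analysis computing $\dim\ah{\NORM_1,\NORM_2,\NORM_3,\NORM_4}$ via the $P$--$Q$ decomposition rules out every pattern except: (A) disjoint coincidence pairs for $\lambda$ and for $\alpha$, in which case all four normals lie in the $3$-space $Q_\lambda + P_\alpha$ (this is case~(iii)); or (B) all $\lambda_i$ distinct and all $\alpha_i \bmod \pi$ distinct. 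In case~(B), the three-constraint analysis shows $\NORM_1,\NORM_2,\NORM_3$ are independent, so the common transversals of $g_1,g_2,g_3$ in $\LINES$ are parameterized by a $1$-dimensional subspace of $\R^4$; by the observation recalled in the discussion of Figure~\ref{fig:lower-bound-ortho}, this $1$-parameter family realizes one ruling of a hyperbolic paraboloid in $\R^3$. The fourth normal $\NORM_4$ lies in $\ah{\NORM_1,\NORM_2,\NORM_3}$ iff this entire ruling consists of transversals to $g_4$, iff $g_4$ belongs to the opposite ruling, iff the four lines are in hyperboloidal position, which is case~(ii).

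The main obstacle is the four-constraint case analysis. Each mixed coincidence pattern---for example $\lambda$-partition $(2,2)$ combined with $\alpha$-partition $(1,1,1,1)$, or $(2,1,1)$-$(1,1,1,1)$, or $(2,1,1)$-$(2,1,1)$ with overlapping rather than disjoint pairs---must be eliminated by showing that forcing $\NORM_4 \in \ah{\NORM_1,\NORM_2,\NORM_3}$ would impose an additional coincidence among the $\lambda_i$'s or the $\alpha_i \bmod \pi$'s and thus make some three normals dependent, contradicting the standing assumption.
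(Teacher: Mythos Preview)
Your approach is correct and genuinely different from the paper's. The paper interprets dependence of the $\NORM_{g_i}$ through the orthogonal complement $F=\{\,u\mid \NORM_{g_i}\cdot u=0\text{ for all }i\,\}$, observes that $F$ is exactly the set of lines in $\LINES$ meeting all the $g_i$, and then argues synthetically in $\R^3$: two constraints are dependent iff every line meeting one meets the other, three are dependent iff every transversal to two meets the third, and four are dependent iff every transversal to three meets the fourth. Each case is then settled by elementary incidence geometry of transversals (planes, pencils through a point, and one ruling of a hyperbolic paraboloid).

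Your route, by contrast, exploits the rank-one tensor structure $\NORM(\lambda,\alpha)=(1-\lambda,\lambda)\otimes a(\alpha)$ and the resulting containments $\NORM_g\in P_\alpha\cap Q_\lambda$. This makes the two- and three-constraint statements essentially mechanical, and reduces the four-constraint statement to a finite partition-pattern analysis. Two remarks. First, several of the ``mixed'' patterns you flag as needing elimination are in fact dispatched instantly by your own $P$--$Q$ decomposition without any coefficient chase: whenever the $\lambda$-partition is $(2,2)$ one has $Q_{\lambda}\oplus Q_{\mu}=\R^4$, and symmetrically for $\alpha$-partition $(2,2)$, so independence is immediate. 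Only $(2,1,1)$-$(1,1,1,1)$ and the overlapping $(2,1,1)$-$(2,1,1)$ pattern actually require the ``forced extra coincidence'' argument you describe, and both reduce to a single line of the form $(\lambda_i-\lambda_j)a_k \in \R a_l$ with $a_k,a_l$ independent. Second, in case~(B) your argument is fine but does lean on the same transversal-geometry fact the paper uses (three pairwise skew constraints have a hyperbolic-paraboloid family of transversals, and a fourth line meets all of them iff it lies in that paraboloid); so the two proofs converge at exactly this point. What your approach buys is a cleaner separation of the purely linear-algebraic content from the one genuinely geometric step; what the paper's approach buys is a uniform conceptual picture (dependence $=$ each constraint meets all transversals to the rest) that makes the later Lemma~\ref{lem:degeneracies} for non-orthogonal constraints a near-verbatim extension.
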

\begin{proof}
  Let $g_{1}, \dots, g_{k}$ be orthogonal constraints,  for $k
  \in \{2,3,4\}$. The normals $\NORM_{g_1},\dots, \NORM_{g_k}$ are
  linearly dependent if and only if they span a linear subspace~$E$ of
  dimension less than~$k$.  This is equivalent to saying that the
  orthogonal complement~$F$ of~$E$ has dimension larger than~$4-k$.
  Since $F = \{\, u\in \R^{4}\mid \FU_{g_{i}}(u) = 0 \text{~for~} 1 \leq
  i \leq k\,\}$, $F$ is the space of lines in~$\LINES$ that meet all
  $k$~constraints~$g_{1},\dots,g_{k}$.  If the normals are linearly
  independent, then $F$ has dimension~$4-k$. If any $k-1$ normals are
  linearly independent, but all $k$ are linearly dependent, then any
  $k-1$ normals already span the subspace~$E$, and its complement
  is~$F$. This is equivalent to saying that every constraint must meet
  all the lines meeting the remaining $k-1$ constraints.

  Consider first two orthogonal constraints~$g_{1}$ and~$g_{2}$. They
  are dependent if and only if every line meeting $g_{1}$ also
  meets~$g_{2}$.  This happens if and only if $g_{1}$ and $g_{2}$ are
  the same line except possibly for their orientation.

  Consider now three orthogonal constraints~$g_{1}, g_{2}, g_{3}$, and
  assume that no two are dependent. Then all three are dependent if
  and only if every line meeting two constraints also meets the third.
  This is impossible if the constraints are pairwise skew.  If two are
  coplanar with $\ell_{0}$, say $g_{1}$ and $g_{2}$, then their common
  transversals are exactly the lines in this plane.  For $g_{3}$ to
  meet all of them, $g_{3}$ has to lie in the same plane as well.  Finally, if
  two constraints, say $g_{1}$ and $g_{2}$, meet in a point $p \in \ell_{0}$ then
  the common transversals are exactly the lines through~$p$. For
  $g_{3}$ to meet all of them, $g_{3}$ has to contain $p$ as well, and
  the three lines are concurrent with~$\ell_{0}$.

  Finally, consider four orthogonal constraints $g_1,\dots,g_4$, and
  assume that no three are dependent. Then every three constraints
  have a one-dimensional family of common transversals.  The four
  constraints are dependent if and only if every constraint meets
  every transversal to the other three. If the lines are pairwise
  skew, then $g_{4}$ must lie in the hyperbolic paraboloid formed by
  the transversals to $g_{1}, g_{2}, g_{3}$, and the four lines are in
  hyperboloidal position.  Otherwise, two constraints must be coplanar
  with $\ell_{0}$ or must meet on $\ell_{0}$.  If two constraints, say
  $g_{1}$ and $g_{2}$, meet in $p \in \ell_{0}$ then the remaining
  constraints cannot contain~$p$. The set of transversals to $g_{1},
  g_{2}, g_{3}$ is the set of lines through~$p$ and~$g_{3}$.  For
  $g_{4}$ to meet all these lines, $g_{4}$ has to lie in the plane
  spanned by $p$ and $g_{3}$, and so $g_{3}, g_{4}, \ell_{0}$ are
  coplanar.  If no two constraints meet, then two must be coplanar,
  say $g_{1}$ and $g_{2}$, and the constraints $g_{3}$ and $g_{4}$
  intersect the plane spanned by $g_{1}, g_{2}, \ell_{0}$ in two
  distinct points $p_{1}$ and $p_{2}$.  But there is only one line
  through $p_{1}$ and $p_{2}$, contradicting the assumption that
  $g_1,\ldots,g_4$ are dependent, and so this case cannot occur.
\end{proof}

\begin{figure}[!htb]
  \centerline{\includegraphics{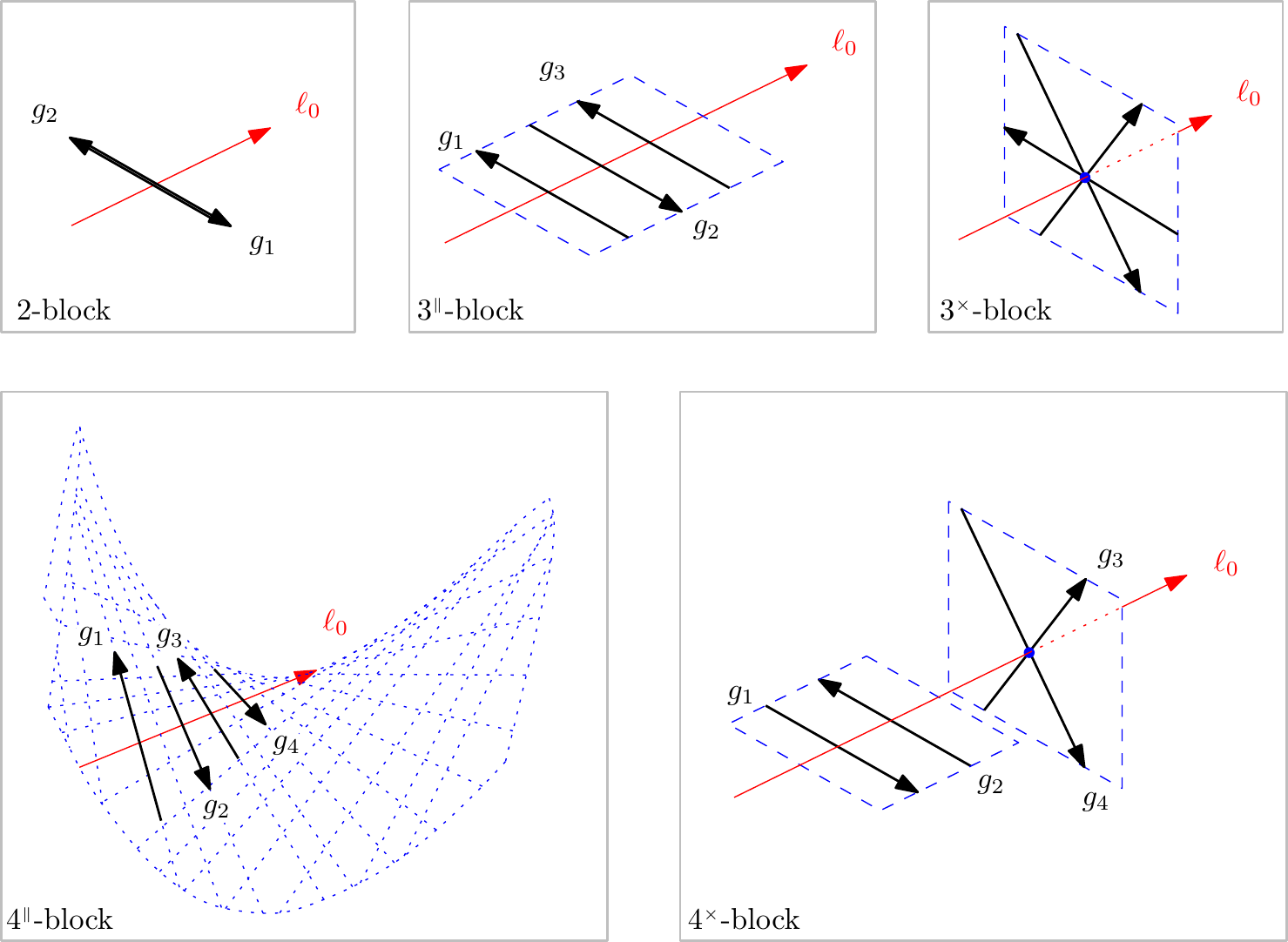}}
  \caption{All types of blocks of size at most four.}
  \label{fig:all-configurations}
\end{figure}
We call a family of constraints a \emph{block} if their normals
form a critical simplex. (We will use blocks to build pinning
configurations.)  The following lemma characterizes blocks
geometrically and introduces names for the various types: A $k$-block
consists of $k$~constraints. \iiix-blocks and \ivx-blocks contain
lines that intersect, while the lines in \iiip-blocks and \ivp-blocks
do not intersect.  Figure~\ref{fig:all-configurations} shows all
blocks of size at most four.
\begin{lemma}
  \label{lem:simplices-of-constraints}
  A family of orthogonal constraints forms a block if and only if it
  contains no proper dependent subfamily and the constraints form one
  of the following configurations:
  \begin{densedescription}
  \item[2-block] the two orientations of the same line;
  \item[\iiip-block] three coplanar constraints with alternating
    orientations (so the lines are $g(\lambda_1, \alpha, 0)$,
    $g(\lambda_2, \alpha+\pi, 0)$ and $g(\lambda_3, \alpha, 0)$ with
    $\lambda_1 < \lambda_2 < \lambda_3$);
  \item[\iiix-block] three constraints concurrent with $\ell_0$ whose
    direction vectors positively span~$\ell_0^{\bot}$;
  \item[\ivp-block] four constraints in hyperboloidal position oriented
    such that only lines lying in the quadric satisfy all four
    constraints;
  \item[\ivx-block] four constraints $\{g_1, \ldots, g_4\}$, where $g_1$
    and $g_2$ lie in a common plane $\Pi$ containing $\ell_0$, $g_3$
    and~$g_4$ meet in a point~$p \in \ell_0$, and $g_1, \ldots, g_4$
    are oriented such that the lines satisfying them are exactly the
    lines in~$\Pi$ passing through~$p$;
  \item[5-block] five constraints oriented such that only~$\ell_{0}$
    satisfies them all.
  \end{densedescription}
\end{lemma}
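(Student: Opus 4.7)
The approach is to combine Lemma~\ref{lem:ortho-dependent}, which classifies dependent families of orthogonal constraints, with the definition of a block: a family whose normals minimally surround the origin in their linear hull, i.e.\ are minimally linearly dependent with positive coefficients. Since a block has no proper dependent subfamily, the geometric configuration must be one of the ``new'' cases in Lemma~\ref{lem:ortho-dependent} at the relevant size. The case analysis on $k = |\FAM| \in \{2,3,4,5\}$ then reduces to determining which orientations turn a mere linear dependence into a critical simplex.

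Throughout, I would use the explicit formula $\NORM_{g(\lambda,\alpha,0)} = ((1-\lambda)v,\, \lambda v)$ with $v = (\sin\alpha,-\cos\alpha)$, noting that flipping orientation sends $\alpha \mapsto \alpha+\pi$ and hence $v \mapsto -v$, which negates the normal. For $k=2$, Lemma~\ref{lem:ortho-dependent} forces the two constraints to be the same unoriented line; two collinear normals span a $1$-space and surround the origin there iff they are antipodal, giving the 2-block. For $k=3$, Lemma~\ref{lem:ortho-dependent} leaves two cases. In the coplanar case (fixed unoriented direction, varying $\lambda_i$), the three normals project to the $2$-plane spanned by $(v,0)$ and $(0,v)$ as the points $\epsilon_i(1-\lambda_i,\lambda_i)$ with $\epsilon_i \in \{\pm 1\}$; elementary planar geometry (three points on two parallel lines containing the origin strictly between them) shows these surround the origin iff, after ordering $\lambda_1<\lambda_2<\lambda_3$, exactly the middle sign is flipped, yielding the \iiip-block. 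In the concurrent case (common $\lambda$, varying~$\alpha$), the normals factor through $v_i$, so they surround the origin in their $2$-plane iff the $v_i$ do so in $\R^2$; since $v_i$ is the $-\pi/2$ rotation of the direction vector of the $i$-th constraint, this is equivalent to the direction vectors positively spanning $\ell_0^{\perp}$, yielding the \iiix-block.

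For $k=4$ I would handle the two subcases of Lemma~\ref{lem:ortho-dependent} separately. In the hyperboloidal case, the common transversals of any three of the four lines form one ruling of the nondegenerate quadric, and the fourth line lies in the opposite ruling; working out the (essentially unique, up to global sign) positive linear dependence among the four normals shows that the corresponding orientation pattern is exactly the one for which the satisfied set coincides with the lines in the quadric, giving the \ivp-block. In the ``two coplanar with $\ell_0$ plus two concurrent on $\ell_0$'' case, the two pairs span complementary $2$-dimensional slices of the normal space, and combining the $k=3$ analyses from the previous paragraph gives the orientation condition that the common solutions are precisely the lines of the coplanar plane $\Pi$ passing through the concurrence point $p$, yielding the \ivx-block. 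For $k=5$, the five normals form a critical $4$-simplex in $\R^4$; by the proposition preceding Lemma~\ref{lem:ortholine-eight}, the intersection of the five halfspaces $\VOL_{g_i}$ is exactly the origin, so only $\ell_0$ satisfies all five constraints.

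The main obstacle will be the orientation analysis for the two 4-blocks, particularly \ivp: one must verify that among the sign patterns compatible with the (unique up to scale) linear dependence among the four normals, exactly one (up to a global flip) yields strictly positive coefficients, and that the resulting satisfied set is geometrically the hyperboloid ruling rather than some larger or smaller locus. The remaining cases reduce to a small planar or linear computation in the appropriate low-dimensional slice of the normal space.
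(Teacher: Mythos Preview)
Your overall strategy---use Lemma~\ref{lem:ortho-dependent} to pin down the unoriented configuration, then determine which orientations make the normals a critical simplex---is the same as the paper's, and your treatment of the cases $k=2,3,5$ is correct. The paper, however, organizes the orientation analysis around a single observation you do not make: if $N$ is the set of normals and $E\subset\R^4$ is the set of lines satisfying all constraints, then $N$ is a critical $k$-simplex if and only if $E$ is a \emph{linear subspace} of dimension $4-k$. With this in hand, each case is handled geometrically: one exhibits the ``obvious'' $(4-k)$-dimensional family of lines that must satisfy the constraints regardless of orientation (lines in the plane, lines through the point, the other ruling of the quadric, lines in~$\Pi$ through~$p$), observes that this family is already a linear subspace of the right dimension, and concludes that $E$ equals it---which is precisely the orientation condition in the statement. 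This sidesteps entirely the sign-pattern computation you identify as the main obstacle for the \ivp-block.

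One technical slip: in the \ivx\ case you assert that the two pairs of normals span \emph{complementary} $2$-spaces. They do not. The coplanar pair spans $\{(av,bv):a,b\in\R\}$ and the concurrent pair spans $\{((1-\lambda)w,\lambda w):w\in\R^2\}$; these intersect in the line spanned by $\NORM(\lambda,\alpha)$, and together they span only the $3$-space $\ah{N}$, not $\R^4$. So you cannot simply ``combine the $k=3$ analyses''; you would instead need to argue that a positive combination of $\eta_1,\eta_2$ and a positive combination of $\eta_3,\eta_4$ land on opposite rays of this common line, and then translate that into the stated orientation condition. This is doable but is exactly the kind of computation the paper's $E$-criterion avoids.
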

\begin{proof}
  Let $\FAM$ be a set of $k+1$ constraints, let $N=\{\,\eta_g \mid g \in
  \FAM\,\}$ be the set of their normals, and let $E \subset \R^{4}$ be
  the set of lines satisfying the constraints in~$\FAM$.  Then $N$
  forms a critical $k$-simplex if and only if $E$ is a linear subspace
  of dimension~$4 - k$.

  By construction, this is true for the configurations of constraints
  described in the lemma.  It follows that these configurations are
  indeed blocks.

  It remains to show that if~$\FAM$ is a block, then it falls into one
  of the six cases.  Since every subset of $N$ must be linearly
  independent, we must have $k \leq 4$.  Since $N$ must be linearly
  dependent, we obtain a first necessary condition from
  Lemma~\ref{lem:ortho-dependent}.

  If $k = 1$, then we are in the first case of
  Lemma~\ref{lem:ortho-dependent}: $\FAM$ consists of two
  constraints $\{g_1,g_2\}$ that are either equal or the two
  orientations of the same line $g$. Two equal points cannot form a
  critical segment, so $\FAM$ is a 2-block.

  If $k = 2$, we are in the second case of
  Lemma~\ref{lem:ortho-dependent}: $\FAM$ consists of three
  constraints $\{g_1,g_2,g_3\}$ that are either coplanar or concurrent
  with~$\ell_0$.  If the $g_i$ are coplanar, then $E$ contains the
  lines in the plane $\Pi$ spanned by the~$g_i$.  This set is already
  a two-dimensional linear subspace, and so $E$ must be equal to this
  set.  If two constraints met consecutively by $\ell_0$ have the same
  orientation then one of them is redundant, and $E$ contains other
  lines; and so the orientations must alternate and $\FAM$ is a
  \iiip-block.

  If the $g_i$ are concurrent in some point $p$ on~$\ell_0$, then $E$
  contains all the lines through~$p$.  Again, this set is a
  two-dimensional linear subspace, and therefore equal to~$E$.  Let
  $\Pi$ denote the plane containing the three constraints. A line
  $\ell$ satisfies a constraint $g_i$ if and only if $\ell \cap \Pi$
  lies in the closed halfplane of~$\Pi$ to the right of~$g_i$. The set
  of points of $\Pi$ to the right of all three $g_i$ is reduced to
  $\{p\}$ if and only if the direction vectors of the $g_i$ positively
  span $\ell_0^\bot$.  This is a \iiix-block.

  If $k=3$, we are in the third case of
  Lemma~\ref{lem:ortho-dependent}: $\FAM$ consists either of four
  constraints in hyperboloidal position, or two lines
  concurrent with~$\ell_{0}$ and two other lines coplanar
  with~$\ell_{0}$.

  If the $g_i$ are in hyperboloidal position, then any line in the
  other family of rulings of the quadric containing the~$g_i$
  satisfies all constraints.  This set is a line in~$\R^4$, and so
  must be identical to~$E$.  It follows that $\FAM$ is a \ivp-block.
  Otherwise, $\FAM$ consists of two lines $g_1$ and~$g_2$ coplanar
  with $\ell_0$,
  and two lines $g_3$ and~$g_4$ meeting in a point~$p \in \ell_0$. The
  set $E$ already contains all lines lying in the plane spanned
  by~$g_1 \cup \{p\}$ and passing through~$p$. This set is a line
  in~$\R^{4}$, and therefore identical to~$E$. And so $\FAM$ is a
  \ivx-block.

  Finally, if $k = 4$, then $N$ is a $4$-simplex surrounding the
  origin.  The five constraints $\FAM$ are satisfied only
  by~$\ell_{0}$, and so $\FAM$ is a 5-block.
\end{proof}

\paragraph*{Remark.} Given five orthogonal constraints such that no
four are dependent, we can always orient them (that is, reverse some
of them) so that they pin~$\ell_{0}$, obtaining a 5-block. 

%%%%%%%%%%%%%%%%%%%%%%%%%%%%%%%%%%%%%%%%%%%%%%%%%%%%%%%%%%%%%%%%%%%%%%%%%%%%%

\subsection{Characterization of minimal pinning configurations}
\label{sec:classification-orth}

 Combining the descriptions of Theorem~\ref{thm:surrounding} with the
 characterization of Lemma~\ref{lem:simplices-of-constraints}, we
 obtain the following characterization of minimal pinnings of a line by
 orthogonal constraints.  The numbering of cases corresponds to the
 cases in Figure~\ref{fig:surrounding}.
\begin{theorem}
  \label{thm:char-ortho}
  A family of orthogonal constraints minimally pins $\ell_0$ if and
  only if it forms one of the following configurations:
  \begin{denseitems}
  \item[(1)] A single 5-block;
  \item[(2a)] Two disjoint \iiip-blocks defining distinct planes;
  \item[(2b)] Two disjoint \iiix-blocks meeting $\ell_0$ in distinct
    points;
  \item[(3a)] Two \ivp-blocks sharing two constraints and defining
    distinct quadrics;
  \item[(3b)] Two \ivx-blocks sharing two constraints, such that their
    coplanar pairs define distinct planes or their concurrent pairs
    define distinct points;
  \item[(3c)] A \ivp-block and a \ivx-block sharing two constraints;
  \item[(4a)] A \ivp-block and a \iiip-block sharing one constraint;
  \item[(4b)] A \ivp-block and a \iiix-block sharing one constraint;
  \item[(4c)] A \ivx-block and a \iiip-block sharing one constraint
    such that they define distinct planes;
  \item[(4d)] A \ivx-block and a \iiix-block sharing one constraint
    such that their concurrent pairs meet $\ell_0$ in distinct points;
  \item[(5a)] A \ivp-block and a disjoint 2-block, where the
    2-block constraints are not contained in the quadric defined by
    the \ivp-block;
  \item[(5b)] A \ivx-block and a disjoint 2-block, where the 2-block
    constraints are neither coplanar with the coplanar pair nor
    concurrent with the concurrent pair of the \ivx-block;
  \item[(6a)] A \iiip-block and two 2-blocks, where the
    four 2-block constraints do not all meet, and where no 2-block
    constraint is contained in the plane defined by the \iiip-block;
  \item[(6b)] A \iiix-block and two 2-blocks, where the
    four 2-block constraints do not all meet, and where no 2-block
    constraint goes through the common point of the
    \iiix-block;
  \item[(7)] A \iiip-block and a \iiix-block sharing one constraint,
    and a disjoint 2-block  that does not lie in the plane of the
    \iiip-block and does not go through the common point of the
    \iiix-point;
  \item[(8)] Four disjoint 2-blocks whose supporting lines are not in
    hyperboloidal position (that is, all orientations of four lines
    with finitely many common transversals).
  \end{denseitems}
\end{theorem}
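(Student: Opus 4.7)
The plan is to derive Theorem~\ref{thm:char-ortho} directly by combining three earlier ingredients. By the Proposition at the beginning of Section~\ref{sec:ortho-minpin}, a family of orthogonal constraints pins $\ell_0$ if and only if the origin lies in the interior of the convex hull of the normals $\NORM_g$; by Lemma~\ref{lem:ortho-dependent}, the map $g\mapsto\NORM_g$ is injective on orthogonal constraints, so $\FAM$ is a \emph{minimal} pinning exactly when the set $N=\{\,\NORM_g : g\in\FAM\,\}$ minimally surrounds the origin in $\R^4$. Theorem~\ref{thm:surrounding} then reduces the problem to enumerating, for each of the eight diagrams of Figure~\ref{fig:surrounding}, the ways of realizing every critical simplex in the diagram as a block of constraints via Lemma~\ref{lem:simplices-of-constraints}, together with the non-degeneracy conditions that prevent proper subfamilies from themselves surrounding the origin.

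The routine part is a case-by-case translation. Going down the diagrams of Figure~\ref{fig:surrounding}, replace each critical $k$-simplex by one of the $(k+1)$-block types listed in Lemma~\ref{lem:simplices-of-constraints}; this immediately yields the structural backbones of cases (1)--(8). The geometric side conditions in the statement (\emph{distinct planes}, \emph{distinct points}, \emph{not in hyperboloidal position}, etc.) are then read off uniformly: for each candidate configuration, ask when a proper subfamily would still span $\R^{4}$ and surround the origin, and exclude exactly those degenerations. For example, two disjoint \iiip-blocks that shared their plane would place all six normals in a single $2$-space of $\R^{4}$ (since the formula for $\NORM_g$ specialized to a fixed $\alpha$ is a linear combination of only two fixed vectors), so $\ah{N}\neq\R^{4}$ and Theorem~\ref{thm:surrounding} would be violated; this forces the ``distinct planes'' condition in case~(2a), and the remaining conditions in (3a)--(8) are obtained by entirely analogous checks. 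The converse direction is then automatic: each configuration in the list obviously realizes the corresponding diagram with $\ah{N}=\R^{4}$, so Theorem~\ref{thm:surrounding} guarantees that it minimally pins $\ell_0$.

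The main obstacle is to rule out case~(*) of Figure~\ref{fig:surrounding}, a union of three critical triangles sharing a single common vertex, which otherwise would appear as a ninth case in the theorem. By Lemma~\ref{lem:simplices-of-constraints} every such triangle is either a \iiip-block or a \iiix-block, and must contain the shared constraint $g$; accordingly, a \iiip-block in the union must have its plane equal to the plane through $g$ and $\ell_0$, and a \iiix-block in the union must have its apex at $g\cap\ell_0$. From the explicit formula for $\NORM_g$ one verifies that the normals of any \iiip-block with that fixed plane lie in a $2$-space $W\subset\R^{4}$, and the normals of any \iiix-block with that fixed apex lie in a $2$-space $V\subset\R^{4}$, with $W\cap V=\R\cdot\NORM_g$. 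Consequently, no matter how the three triangle-blocks are distributed between the two types, the seven resulting normals lie in the $3$-space $W+V$ (or in one of $W$, $V$), contradicting the condition $\ah{N}=\R^{4}$ required by Theorem~\ref{thm:surrounding}. This excludes case~(*) and completes the enumeration.
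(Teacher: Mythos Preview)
Your proposal is correct and follows essentially the same route as the paper: translate minimal pinning to the normal set $N$ minimally surrounding the origin, invoke Theorem~\ref{thm:surrounding} to enumerate the diagrams, realize each critical simplex as a block via Lemma~\ref{lem:simplices-of-constraints}, and impose the side conditions that force $\ah{N}=\R^4$. The one substantive difference is the exclusion of case~(*): the paper observes (via pigeonhole) that two of the three $3$-blocks sharing~$g$ have the same type and hence span the same $2$-space, then appeals to Lemma~\ref{lem:overlap}(i); you instead compute directly that all seven normals lie in the $3$-space $W+V$, which is a cleaner and more self-contained argument and incidentally also handles the exclusion, in diagram~(7), of two $3$-blocks of the same type.

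One minor imprecision worth fixing: you describe the side conditions as excluding configurations in which ``a proper subfamily would still span~$\R^{4}$ and surround the origin,'' but in every case the degeneration actually makes the \emph{full} family fail to span~$\R^4$ (so it does not pin at all), exactly as in your worked example for~(2a) and as the paper phrases it (``otherwise $\FAM$ would not be pinning''). The mechanism you apply is right; only the framing sentence is off.
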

\begin{proof}
  Let $\FAM$ be a family of orthogonal constraints and let $N=\{\,\eta_g
  \mid g \in \FAM\,\}$ be the corresponding family of normals. Recall
  that $\FAM$ minimally pins $\ell_0$ if and only if $N$ minimally
  surrounds the origin in $\R^4$.

  We assume first that $\FAM$ is of one of the 16~types described in
  the theorem.  For each of the types, we can argue that the line
  $\ell_0$ is pinned, since the sets of lines satisfying each of the
  blocks have only $\ell_{0}$ as a common element.  Since $\ell_0$ is
  pinned, $N$ spans $\R^4$.  For all the 16~types, the critical
  simplices of $N$ corresponding to the blocks of~$\FAM$ are of the
  form described in Theorem~\ref{thm:surrounding}, so $N$ minimally
  surrounds the origin, and $\FAM$ minimally pins~$\ell_{0}$.

  It remains to argue the reverse.  We assume that $\FAM$
  minimally pins~$\ell_0$, that is $N$ minimally surrounds the
  origin. Then $N$ is of one of the types described in
  Theorem~\ref{thm:surrounding} and shown in
  Figure~\ref{fig:surrounding}.

  If $|N| = 5$, then $N$ is a critical 4-simplex, and so $\FAM$ is a
  5-block---this is case~(1).  

  If $|N| = 8$, then $N$ is the disjoint union of four critical
  segments, and so $\FAM$ is the disjoint union of four
  2-blocks. Their supporting lines cannot be in hyperboloidal position
  (as then $\FAM$ would not be pinning at all), and so we have
  case~(8).  

  We now consider $|N| = 6$.  If $N$ is the disjoint union of two
  critical triangles, then we must be in case~(2a) or~(2b), as the
  union of a \iiip-block and a \iiix-block cannot pin.  If $N$
  consists of a two critical tetrahedra sharing two constraints, we
  are in cases~(3a)--(3c). If $N$ consists of a critical tetrahedron
  and a critical triangle sharing one constraint, we are in
  cases~(4a)--(4d).  If $N$ consists of critical tetrahedron and a
  disjoint critical segment, we have cases~(5a)--(5b).  The additional
  conditions in cases~(2a)--(3b) and~(4c)--(4d) hold as otherwise
  $\FAM$ would not be pinning at all.

  We finally turn to the case $|N| = 7$. Here we need an additional
  observation: If two \iiip-blocks share a constraint, then they both
  define the same plane.  That implies that the critical triangles
  formed by the normals span the same 2-space, and so these triangles
  cannot appear together in a point set minimally surrounding the
  origin by Lemma~\ref{lem:overlap}.  It follows that $\FAM$ cannot
  contain two \iiip-blocks sharing a constraint, and the same
  reasoning applies to two \iiix-blocks.  Therefore $\FAM$ cannot
  contain three 3-blocks sharing a common constraint, and if it
  contains two 3-blocks sharing a constraint, then these must be a
  \iiip-block and a \iiix-block. This implies that we must be in
  cases~(6a),~(6b) or~(7).  The additional conditions again hold as
  otherwise~$\FAM$ would not be pinning.
\end{proof}

%%%%%%%%%%%%%%%%%%%%%%%%%%%%%%%%%%%%%%%%%%%%%%%%%%%%%%%%%%%%%%%%%%%%%%%%%%%%%

\section{Minimal pinning configurations by general constraints}
\label{sec:minpin}

We now return to constraints that are not necessarily orthogonal. As
we saw before, the boundary $\FU_{g}(u) = 0$ of the set $\VOL_{g}$ is
a quadric through the origin, with outward normal $\NORM_{g}$~in the
origin.  Interestingly, this normal $\NORM_{g}$ only depends on
$\lambda$ and $\alpha$, but not on the slope~$\delta$.  If we consider a
constraint $g(\lambda, \alpha, \delta)$ for varying $\delta$, the
shape of $\partial \VOL_{g}$ changes, but its outward
normal in the origin remains the same.  In particular, when $\delta$
reaches zero, the quadratic term in $\FU_{g}$ vanishes, and we have
$\FU_{g}(u) = \NORM_{g} \cdot u$. In other words, linearizing the
volume $\VOL_{g}$ corresponds to replacing~$g$ by its projection
$\gORTH$ on the plane perpendicular to $\ell_0$ in $\ell_0 \cap g$;
$\VOL_{\gORTH}$ is the halfspace with outer normal $\NORM_{g}$ through
the origin. We call $\gORTH$ the \emph{orthogonalized constraint} of
$g$, and denote by $\FAMO$ the family of orthogonalized constraints of
$\FAM$. (Note that $\FAMO$ can have smaller cardinality than~$\FAM$.)

\paragraph{First order pinning.}
As in the previous section, we call a family of constraints~$\FAM$
\emph{dependent} if their normals are linearly dependent.  Since the
observation above shows that the normal to a constraint~$g$ is
identical to the normal of~$\gORTH$, $F$ is dependent if and only if
$\FAMO$ is dependent.  For a characterization of dependent families,
we can simply refer to Lemma~\ref{lem:ortho-dependent}, but that
leaves a question: What does it mean geometrically for four
constraints if their orthogonalized constraints are in hyperboloidal
position? The following lemma clarifies this. Given three pairwise
skew lines $g_{1}, g_{2}, g_{3}$, we let $\BOLOID(g_1, g_2, g_3)$ be
the quadric formed by the transversals to the three lines.
\begin{lemma}
  \label{lem:degeneracies}
  Two constraints are dependent if and only if they are at the same
  time coplanar and concurrent with~$\ell_{0}$.
  
  Three constraints are dependent if and only if (i) two of them are,
  or if (ii) the constraints are coplanar with~$\ell_{0}$, or if (iii)
  the constraints are concurrent with~$\ell_{0}$.

  Four constraints $g_{1}, g_2, g_3, g_4$ are linearly dependent if
  and only if (i) three of them are, or if (ii) $g_4$ is tangent to
  the quadric $\BOLOID(g_1, g_2, g_3)$ at~$\ell_{0} \cap g_{4}$, or if
  (iii) two of the lines are concurrent with~$\ell_{0}$ and the other
  two are coplanar with~$\ell_{0}$.
\end{lemma}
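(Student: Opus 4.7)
The crucial input, already flagged in the paragraph preceding the lemma, is that $\NORM_g$ depends only on $\lambda_g$ and $\alpha_g$, so $\NORM_g = \NORM_{\gORTH}$, and hence $\FAM$ is dependent if and only if $\FAMO$ is dependent. The plan is to apply Lemma~\ref{lem:ortho-dependent} to $\FAMO$ and translate each of the resulting conditions back into geometric conditions on $\FAM$.

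For the two- and three-constraint cases the translation is essentially tautological: the parameters $\lambda_g$ and $\alpha_g$ record the point at which $g$ meets $\ell_0$ and the $xy$-projected direction of $g$, so the conditions ``coplanar with $\ell_0$'' (equivalently, common $\alpha$ mod $\pi$) and ``concurrent with $\ell_0$'' (equivalently, common $\lambda$) are preserved under orthogonalization. In particular, two orthogonalized constraints coinciding up to orientation is the same as the originals being simultaneously coplanar and concurrent with $\ell_0$, giving the two-constraint case; the three-constraint case likewise reduces directly to Lemma~\ref{lem:ortho-dependent}.

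For four constraints, cases (i) and (iii) translate immediately for the same reason, and the only genuinely new content is case (ii): one must show that $g_1^{\perp},\ldots,g_4^{\perp}$ are in hyperboloidal position if and only if $g_4$ is tangent to $\BOLOID(g_1,g_2,g_3)$ at $p := \ell_0 \cap g_4$. I would handle this in two steps. First, observe that the tangent plane $T_p$ to $\BOLOID(g_1,g_2,g_3)$ at $p$ is spanned by $\ell_0$ (which lies on the quadric as a common transversal to $g_1,g_2,g_3$) and by the unique line $h(p)$ of the opposite ruling passing through $p$; in particular $T_p$ is vertical, so the condition $g_4 \subset T_p$ depends only on the $xy$-direction of $g_4$, i.e.\ only on $\alpha_{g_4}$. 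Hence tangency of $g_4$ at $p$ is really a condition on $\gORTH_4$ alone, and amounts to requiring that $\gORTH_4$ and $h(p)$ have the same $xy$-direction. Second, identify the ruling of $\BOLOID(g_1^{\perp},g_2^{\perp},g_3^{\perp})$ opposite to $\ell_0$ with the orthogonalization of the $g_i$-ruling of $\BOLOID(g_1,g_2,g_3)$: both can be parametrized by the base point $q \in \ell_0$, and at every such $q$ the two ruling lines have the same $xy$-direction. Under this identification, $g_4^{\perp}$ lies in the $\gORTH_i$-ruling of $\BOLOID(g_1^{\perp},g_2^{\perp},g_3^{\perp})$ iff $\gORTH_4$ has the $xy$-direction of $h(p)$, iff $g_4 \subset T_p$.

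The main obstacle is justifying the ruling identification in the second step. I expect to derive it from the explicit formula for $\NORM_g$: a line through $q \in \ell_0$ that is transversal to $g_1^{\perp},g_2^{\perp},g_3^{\perp}$ is determined by its $xy$-direction, which is pinned down by the three linear equations coming from the $\NORM_{g_i}$'s; because $\NORM_{g_i}=\NORM_{\gORTH_i}$, the same three equations determine the $xy$-direction of $h(q)$ as the line in $\BOLOID(g_1,g_2,g_3)$ through $q$ in the $g_i$-ruling. Once this identification is in hand, Lemma~\ref{lem:ortho-dependent} applied to $\FAMO$ yields the three cases stated in the lemma for $\FAM$.
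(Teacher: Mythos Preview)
Your overall strategy matches the paper's proof: reduce to Lemma~\ref{lem:ortho-dependent} via $\NORM_g = \NORM_{\gORTH}$, with only the hyperboloidal case for four constraints requiring real work. Your first step is correct and is the same observation the paper exploits at the end (the tangent plane at $p$ is spanned by $\ell_0$ and the ruling line through $p$, hence is vertical, so tangency of $g_4$ depends only on~$\alpha_{g_4}$).

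The gap is in your justification of the ruling identification. You write that ``a line through $q \in \ell_0$ that is transversal to $g_1^{\perp},g_2^{\perp},g_3^{\perp}$ is determined by its $xy$-direction, which is pinned down by the three linear equations coming from the~$\NORM_{g_i}$'s.'' But a line through $q\in\ell_0$ transversal to the $g_i^\perp$ lies in the ruling \emph{containing}~$\ell_0$, not in the $g_i^\perp$-ruling; the only such transversal through~$q$ is $\ell_0$ itself. The three equations $\NORM_{g_i}\cdot u = 0$ cut out the one-parameter family of common transversals (the $\ell_0$-ruling) in line space, not the $g_i$-ruling, so they do not directly hand you the $xy$-direction of either $h(q)$ or $h^\perp(q)$. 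Your claim that $h(p)$ and $h^\perp(p)$ share the same~$\alpha$ is true, but the argument as written confuses the two rulings and does not establish it.

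The paper avoids the ruling identification altogether. It sets $g^\ast = h(p)$ and shows directly that $\NORM_{g^\ast}\in E=\langle\NORM_{g_1},\NORM_{g_2},\NORM_{g_3}\rangle$: since the curve $\bigcap_{i=1}^{3}\partial\VOL_{g_i}$ of common transversals lies in $\partial\VOL_{g^\ast}$, its tangent direction at the origin (namely $E^\perp$) must lie in the tangent hyperplane $\{\NORM_{g^\ast}\cdot u=0\}$, forcing $\NORM_{g^\ast}\in E$. A short explicit computation with the formula for $\NORM(\lambda_4,\alpha)$ then shows that, among constraints through~$p$, membership of the normal in~$E$ determines $\alpha$ uniquely mod~$\pi$. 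This simultaneously proves your ruling identification (since $(h(p))^\perp$ is the orthogonal constraint through $p$ with normal in $E$, hence equals $h^\perp(p)$ up to orientation) and the tangency characterization, without ever comparing the two quadrics.
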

\begin{proof}
  The normals of constraints are linearly dependent if and only if the
  normals of their orthogonalized constraints are.  This immediately
  implies the statements for two and three constraints, and nearly
  implies the statement for four constraints.  It remains to show that
  $\gORTH_1, \dots, \gORTH_4$ are in hyperboloidal position if and
  only if $g_{4}$ is tangent to the quadric $\BOLOID(g_1, g_2, g_3)$
  at~$\ell_{0} \cap g_{4}$.
  
  We can assume that the lines $g_{1}, g_{2}, g_{3}$ are pairwise
  skew, so the quadric $\BOLOID = \BOLOID(g_{1}, g_{2}, g_{3})$ is
  defined.  Since $\ell_0$ is contained in $\BOLOID$, for any point $p
  \in \ell_0$ there is a unique constraint contained in~$\BOLOID$ and
  passing through~$p$. In particular, let $g^{*}=g(\lambda_4,
  \alpha^{*}, \delta^{*})$ denote the constraint contained in
  $\BOLOID$ through $p = \ell_0 \cap g_4 = (0,0,\lambda_4)$. Since the
  curve $\bigcap_{i=1}^{3} \partial \VOL_{g_i}$ is contained in
  $\partial \VOL_{g^{*}}$, its tangent in the origin, which is
  orthogonal to the normals $\NORM_{g_1}, \ldots, \NORM_{g_3}$, is
  contained in the tangent plane to $\VOL_{g^{*}}$ at the origin.
  This implies that $\NORM_{g^{*}}$ lies in the linear subspace~$E$
  spanned by~$\NORM_{g_1}, \ldots, \NORM_{g_3}$.  Let ${w} \in \R^{4}$
  be such that $E = \{\,{w} \cdot {u} = 0 \mid {u} \in \R^4\,\}$.
  Consider another constraint $g = g(\lambda_{4}, \alpha, \delta)$
  through~$p$. The normal $\NORM_{g}$ lies in~$E$ if and only if
  \[
  0 = w \cdot \NORM_{g} = w \cdot \NORM(\lambda_{4}, \alpha) = 
  \left(\begin{array}{c}
    w_{1} (1-\lambda_{4}) + w_{3}\lambda_{4}\\
    -w_{2} (1-\lambda_{4}) - w_{4}\lambda_{4}
  \end{array}\right) 
  \cdot
  \left(\begin{array}{c}
    \sin \alpha \\
    \cos \alpha
  \end{array}\right),
  \]
  that is when $(\sin \alpha, \cos \alpha)$ is orthogonal to a fixed
  vector.  Modulo~$\pi$, that is, up to reversal of the line, there is
  only one solution for~$\alpha$, namely when $\alpha = \alpha^{*}$.

  It follows that the normals of $g_{1}, g_{2}, g_{3}$ and $g$ are
  linearly dependent if and only if $g$ is coplanar and concurrent
  with~$g^{*}$ and~$\ell_0$.  This is true if and only if $g$ is tangent
  to~$\BOLOID$ in~$p$.
\end{proof}

By approximating the volumes $\VOL_g$ to first order, we get:
\begin{lemma}
  \label{lem:generic-linearization}
  Let $\FAM$ be a family of constraints. If $\FAMO$ pins~$\ell_0$ then
  $\FAM$ pins~$\ell_0$.  If $\FAM$ pins $\ell_0$ and no four
  constraints in $\FAM$ are dependent, then $\FAMO$ pins~$\ell_0$.
\end{lemma}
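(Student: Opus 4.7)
The plan is to compare $\FU_g(u)=\delta_g\phi(u)+\NORM_g\cdot u$ with its first-order approximation $\NORM_g\cdot u$ near the origin, where $\phi(u)=u_2u_3-u_1u_4$. Pinning by $\FAMO$ is equivalent to the linear system $\bigcap_g\{\NORM_g\cdot u\le 0\}$ reducing to $\{0\}$ in $\R^4$; I plan to show that this linear certificate is always sufficient, and also necessary under the no-four-dependent hypothesis.

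\textbf{First direction.}
Assume $\FAMO$ pins $\ell_0$. By compactness of the unit sphere in $\R^4$, there is $c>0$ such that every unit vector $u$ satisfies $\NORM_g\cdot u\ge c$ for some $g$. Hence for any $u\ne 0$ close to the origin, choosing such a $g$ gives $\FU_g(u)\ge c\|u\|-M\|u\|^2>0$ with $M=\max_g|\delta_g|$, so $u\notin\VOL_g$. The origin is therefore isolated in $\bigcap_g\VOL_g$ and $\FAM$ pins $\ell_0$.

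\textbf{Second direction.}
Assume $\FAM$ pins and no four constraints in $\FAM$ are dependent. Families of size $|\FAM|\le 3$ never pin $\ell_0$ (a direct inspection of the Isolation Lemma), so I may assume $|\FAM|\ge 4$; the hypothesis then also excludes any three dependent normals, because adjoining any fourth to such a triple would yield a dependent 4-tuple. I argue by contrapositive: if $\FAMO$ does not pin, I produce a nonzero $u_\tau\to 0$ with $\FU_g(u_\tau)\le 0$ for every $g$, contradicting pinning. Pick $a\ne 0$ with $\NORM_g\cdot a\le 0$ for all $g$ and let $\FAM_0=\{g:\NORM_g\cdot a=0\}$. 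If $\phi(a)=0$, then $\FU_g(\tau a)=\tau\NORM_g\cdot a\le 0$ for every $g$ and every $\tau>0$, so $u_\tau=\tau a$ works. If $\phi(a)\ne 0$, the active normals lie in the three-dimensional subspace $a^\perp$, so the no-four-dependent hypothesis forces $|\FAM_0|\le 3$ and the ``no three dependent'' consequence makes the active normals linearly independent in $\R^4$. I may therefore choose $b$ with $\NORM_g\cdot b=-\delta_g\phi(a)-1$ for each $g\in\FAM_0$. Setting $u_\tau=\tau a+\tau^2 b$ and Taylor-expanding yields
\[
\FU_g(u_\tau)=\tau\,\NORM_g\cdot a+\tau^2\bigl[\delta_g\phi(a)+\NORM_g\cdot b\bigr]+O(\tau^3),
\]
which is strictly negative for every $g$ and all sufficiently small $\tau>0$: for $g\notin\FAM_0$ the linear term dominates, and for $g\in\FAM_0$ the quadratic coefficient equals $-1$ by construction.

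\textbf{Main obstacle.}
The delicate step is the second direction: one must find a single second-order perturbation $b$ that simultaneously makes the quadratic coefficient negative for every active constraint. The no-four-dependent hypothesis is precisely what bounds $|\FAM_0|\le 3$ and (through its ``no three dependent'' consequence) forces the active normals to be linearly independent, which together guarantee that the defining linear system for $b$ is solvable.
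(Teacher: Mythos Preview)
Your proof is correct, and takes a genuinely different route from the paper's.

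\medskip

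\textbf{First direction.} The paper invokes a semialgebraic black box: the origin is isolated in an intersection of sets bounded by algebraic surfaces if and only if no smooth path leaves the origin inside that intersection; then any such path has a nonzero tangent $v$, and since the normals surround the origin some $\NORM_g\cdot v>0$, so the path exits $\VOL_g$. Your compactness argument is more elementary and fully explicit: you extract a uniform $c>0$ with $\max_g \NORM_g\cdot u\ge c$ on the unit sphere and beat the quadratic term by $c\|u\|-M\|u\|^2>0$. This avoids the smooth-path characterization entirely.

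\medskip

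\textbf{Second direction.} The paper argues: if $\FAM$ pins, then the linear cone $\bigcap_g\{\NORM_g\cdot u\le 0\}$ has empty interior (same smooth-path reasoning), and if it is not $\{0\}$ then $0$ lies on the boundary of $\conv\{\NORM_g\}$, which forces four normals to be linearly dependent. Your argument is instead a direct construction: from a witness $a\ne 0$ that $\FAMO$ does not pin, you build an explicit curve $u_\tau=\tau a+\tau^2 b$ inside $\bigcap_g\VOL_g$. The hypothesis is used exactly where it is needed---to cap $|\FAM_0|\le 3$ and to make the active normals independent so that the linear system for $b$ is solvable. This is more constructive and makes the role of the ``no four dependent'' hypothesis transparent.

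\medskip

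\textbf{One point to tighten.} Your assertion that families with $|\FAM|\le 3$ never pin is correct but not quite a ``direct inspection''; it deserves one sentence. For instance: with at most three halfspaces $\VOLV_g$ in $\R^5$, either the $5$-dimensional normals are independent and $C$ has nonempty interior (hence meets both $T^>$ and $T^<$), or the lineality space of $C$ has dimension at least~$3$ and hence cannot lie in $(T\cap\MANI^>)\cup\{0\}$ since the form $u_2u_3-u_1u_4$ has signature $(2,2)$ on $T$. Either way none of the cases of the Isolation Lemma applies.
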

\begin{proof}
  Since the sets $\VOL_{g}$ are bounded by algebraic surfaces of
  constant degree, the origin $0$ is isolated in the intersection of
  such volumes if and only if there exists no smooth path moving away
  from~$0$ inside that intersection. If the tangent vector at~$0$ to a
  smooth path~$\gamma$ makes a positive dot product with $\NORM_g$,
  then $\gamma$ locally exits $\VOL_g$. If $\FAMO$ pins $\ell_0$, then
  the normals to $\FAMO$ surround the origin and any vector must make
  a positive dot product with the normal to at least one of the
  constraints in~$\FAMO$. Since these are also the normals of the
  constraints in~$\FAM$, no smooth path can move away from $0$ inside
  $\bigcap_{g \in \FAM}\VOL_g$, and $\FAM$ also pins~$\ell_0$. The
  same argument shows that if $\FAM$ pins~$\ell_0$, then
  $\bigcap_{\gORTH \in \FAMO}\VOL_{\gORTH}$ must have empty
  interior. In that case, if it is not a single point then four of the
  normals to the constraints in~$\FAMO$ are linearly dependent, and
  the statement follows.
\end{proof}
Lemma~\ref{lem:generic-linearization} implies that the minimal pinning
examples we gave in the previous section are surprisingly robust: We
can start with a family of orthogonal constraints pinning~$\ell_{0}$,
rotate each constraint by changing its $\delta$-parameter arbitrarily,
and the resulting family will still pin~$\ell_{0}$. However, the lemma
does not exclude the possibility that there are pinnings that are not
robust in this sense, and indeed this is the case. We saw in the
previous section that minimal pinnings by orthogonal constraints
consist of between five and eight lines.  Surprisingly, it is possible
to pin using only \emph{four} non-orthogonal constraints, as we will
see below.

Our Isolation Lemma analyzes the intersection of the cone
$C=\bigcap_{g\in \FAM}\VOLV_g$ with $\MANI$ near $0$ in terms of the
trace of $C$ on the hyperplane $T$. Perhaps surprisingly, this trace
has a simple geometric interpretation. Consider the map $\phi\colon \R^{4}
\rightarrow \R^{5}$ with $\phi(u_{1},u_2,u_3,u_4) =
(u_1,u_2,u_3,u_4,0)$. Clearly, $\phi$ defines a bijection
between~$\LINES$ and the hyperplane~$T = \{u_5 = 0\}$. Moreover, given
a constraint $g$, a line $\ell(u)$ satisfies $\gORTH$ if and only if
$\phi(u) \in \VOLV_g$. Thus, $\phi^{-1}(C \cap T)$ represents, in our
$\R^4$ parameterization of $\LINES$, those lines that satisfy $\FAMO$.
In particular, $\FAMO$ pins~$\ell_{0}$ if and only if $C \cap T =
\{0\}$.  In the proof of Lemma~\ref{lem:general-eight}, we observed
that if $\FAM$ minimally pins $\ell_0$ and $C \cap T \neq \{0\}$ then
$\FAM$ has size at most six. This implies:
\begin{lemma}
  \label{lem:general-six}
  If $\FAM$ is a minimal set of constraints pinning~$\ell_{0}$ such
  that $\FAMO$ does not pin~$\ell_0$ then $\FAM$ has size at most six.
\qed
\end{lemma}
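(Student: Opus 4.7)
The plan is to reduce the statement to the observation already extracted inside the proof of Lemma~\ref{lem:general-eight}: whenever $\FAM$ is a minimal pinning and $C \cap T \neq \{0\}$, where $C = \bigcap_{g\in\FAM}\VOLV_g$, the size of $\FAM$ is at most six. All that remains is to express the hypothesis ``$\FAMO$ does not pin $\ell_0$'' in this form.

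First I would verify the geometric identification of $C \cap T$ given just before the statement. The map $\phi\colon(u_1,u_2,u_3,u_4)\mapsto(u_1,u_2,u_3,u_4,0)$ is a linear bijection between $\R^4\cong\LINES$ and the tangent hyperplane $T$, and for every constraint $g = g(\lambda,\alpha,\delta)$ we have $\phi(u)\in \VOLV_g$ if and only if $\NORM_g\cdot u\leq 0$, since the $\delta u_5$ term in the defining linear form of $\VOLV_g$ vanishes on $T$. But $\NORM_g\cdot u\leq 0$ is precisely the defining inequality $\FU_{\gORTH}(u)\leq 0$ of $\VOL_{\gORTH}$, so $\phi^{-1}(C\cap T)$ is exactly the set of lines of $\LINES$ satisfying $\FAMO$.

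Second, $C\cap T$ is a cone through the origin, hence so is its linear preimage $\phi^{-1}(C\cap T)$; being pinned is therefore equivalent to this cone being reduced to $\{0\}$. Summing up, $\FAMO$ pins $\ell_0$ if and only if $C\cap T = \{0\}$. The lemma's hypothesis thus reads $C\cap T\neq\{0\}$, so the dimension $k = \dim\ah{C\cap T}$ satisfies $1\leq k\leq 3$, placing us in one of the cases in which the proof of Lemma~\ref{lem:general-eight} produces a pinning subfamily $\G\subseteq \FAM$ of size at most six. By minimality of $\FAM$, that subfamily equals $\FAM$ itself, giving $|\FAM|\leq 6$. There is no genuine obstacle here; the content lies entirely in the correspondence between $\FAMO$ and the trace $C\cap T$, which is why the statement can be punctuated with \qed directly.
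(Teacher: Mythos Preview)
Your proposal is correct and follows exactly the route the paper intends: the paragraph preceding the lemma establishes that $\phi^{-1}(C\cap T)$ is the set of lines satisfying $\FAMO$, hence $\FAMO$ pins $\ell_0$ if and only if $C\cap T=\{0\}$, and the remark after Lemma~\ref{lem:general-eight} records that in the cases $k=1,2,3$ of that proof a subfamily of size at most six already pins. Your write-up simply makes these two observations explicit and combines them, which is precisely why the paper dispenses with a formal proof and closes the statement with \qed.
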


\paragraph{Examples of higher-order pinnings.}
In the proof of Lemma~\ref{lem:general-eight}, we had considered two
cases, namely where $C \cap T = \{0\}$ (this includes the case where
$C$ is a line intersecting~$\MANI$ transversely), and where $E = \ah{C
  \cap T}$ is a $k$-space with $1 \leq k \leq 3$.  In the former case
only~$\ell_{0}$ satisfies~$\FAMO$, so $\FAMO$ pins~$\ell_{0}$.  In the
latter case, however, all lines in $\phi^{-1}(C\cap T)$
satisfy~$\FAMO$, and so~$\FAMO$ does not pin.  We give a few such
examples, also showing that dimensions~1, 2,~and~3 are all possible
for~$E = \ah{C\cap T}$.

First, consider the four constraints 
\begin{align*}
  g_0 &= \{\,(-t, 0, 0) \mid t \in \R\,\},\\
  g_1 & = \{\,(t, t, 1)\mid t \in\R\,\},\\
  g_2 &= \{\,(-t, -2t, 2)\mid t \in \R\,\}, \hbox{ and }\\
  g_3 &= \{\,(t, 3t, 3)\mid t \in \R\,\},
\end{align*}
oriented in the direction of increasing~$t$.  The four constraints lie
in the quadric~$\BOLOID$ defined by the equation~$y = xz$ and have
alternate orientations, so they form a \ivp-block.  A line $\ell$
satisfies the four constraints if and only if it lies in the other
family of rulings of~$\BOLOID$.  These lines are of the
form~$\{\,(a,at,t)\mid t \in \R\,\}$, for $a \in \R$.  We now replace
$g_0$ by $g'_0$, by rotating the constraint slightly around the origin
inside the plane $y = 0$ (the tangent plane to~$\BOLOID$ in the
origin).  For concreteness, we choose $g'_{0}$ to be the line~$\{(-t,
0, -t/100)\}$.  This means that for points on $g'_0$, we have $y - xz
= 0 - t^2/100 < 0$ for $t \neq 0$. This implies that $g'_{0}$
touches~$\BOLOID$ in the origin, and otherwise lies entirely in the
volume $y < xz$ bounded by~$\BOLOID$. In order to satisfy the four
constraints $g'_{0}, g_1, g_2, g_{3}$, a line near $\ell_{0}$ would
have to intersect~$\BOLOID$ at least three times (points of tangency
counted twice), and thus lies in~$\BOLOID$ as $\BOLOID$ is a
quadric. A line close to~$\ell_0$, but distinct from it, that is
contained in $\BOLOID$ must violate~$g'_0$. It follows that the family
$\FAM=\{g'_{0}, g_{1}, g_{2}, g_3\}$ pins~$\ell_{0}$. Since $\FAMO =
\{g_{0}, g_{1}, g_{2}, g_{3}\}$ does not pin~$\ell_{0}$, this example
already shows that the condition of independence in the second
statement of Lemma~\ref{lem:generic-linearization} is necessary. Note
that in this example the space $E = \ah{C \cap T}$ is one-dimensional
(it is the set of transversals to~$\FAMO$).

Second, consider the family~$\FAM$ consisting of the six lines $g_1 =
g(0, -\pi/2, -1)$, $g_2 = g(0, \pi/2, -1)$, $g_3 = g(1, 0, 1)$, $g_4 =
g(1, \pi, 1)$, $g_5 = g(2, 3\pi/4, 0)$, and $g_6 = g(2, -\pi/4, 0)$.
The family~$\FAMO$ consists of three 2-blocks, that is all
orientations of three lines.  The set of lines satisfying~$\FAMO$ is
the set of transversals to these three lines, again a one-dimensional
family.  We can verify that $\FAM$ pins~$\ell_{0}$ using the Isolation
Lemma.  The halfspaces corresponding to the lines are:
\begin{align*}
  \VOLV_{g_1}\colon & -u_1 - u_5 \leq 0,\\
  \VOLV_{g_2}\colon & \hphantom{{}-{}}u_1 - u_5 \leq 0,\\
  \VOLV_{g_3}\colon & -u_4 + u_5 \leq 0,\\
  \VOLV_{g_4}\colon & \hphantom{{}-{}}u_4 + u_5 \leq 0,\\
  \VOLV_{g_5}\colon & -u_1 - u_2 + 2u_3 + 2u_4 \leq 0, \text{ and}\\
  \VOLV_{g_6}\colon & \hphantom{{}-{}}u_1 + u_2 - 2u_3 - 2u_4 \leq 0.
\end{align*}
The constraints $g_1$ and $g_2$ imply $u_5 \geq 0$, and
% that $u_5 = 0$ implies $u_1 = 0$.  
constraints $g_3$ and $g_4$ imply $u_5
\leq 0$. %, and that $u_5 = 0$ implies $u_4 = 0$.
 Together they enforce $u_5=0$ and then
$u_1 = u_4 = 0$.  Plugging this into the last two constraints we
obtain~$u_2 = 2 u_3$.  Since the 1-space $\{\,(0, 2t, t, 0, 0)\mid t \in
\R\,\}$ intersects~$\MANI$ in the origin only, $\FAM$ pins~$\ell_{0}$.
None of the constraints is redundant, as can be checked by showing
that for each $g_i$ there is a point in $T \cap \MANI$ satisfying all
but this constraint.

Third, replace the lines $g_5$ and~$g_6$ in the previous family by
$g'_5 = g(2, -3\pi/4, 0)$ and $g'_6 = g(3, \pi/4, 0)$. This produces
two different halfspaces
\begin{align*}
  \VOLV_{g'_5}\colon & \hphantom{{}-{}} u_1 - u_2 - 2u_3 + 2u_4 \leq
  0, \text{ and}\\
  \VOLV_{g'_6}\colon & -2u_1 + 2u_2 + 3u_3 - 3u_4 \leq 0.
\end{align*}
Plugging $u_1 = u_4 = u_5 = 0$ into these constraints we obtain
$-u_2/2 \leq u_3 \leq -2u_2/3$.  This is a two-dimensional wedge in the
$(u_2, u_3)$-plane.  It follows that $E = \ah{C \cap T}$ is the
2-space $u_1 = u_4 = u_5 = 0$. The intersection $E\cap \MANI$ consists
of the two 1-spaces $u_2 = 0$ and $u_3 = 0$, which intersect the wedge
$-u_2/2 \leq u_3 \leq -2u_2/3$ in the origin only.  It follows that
$\FAM = \{g_1, g_2, g_3, g_4, g'_5, g'_6\}$ pins~$\ell_{0}$.  Again,
we can check minimality by verifying that no constraint is redundant.

Finally, we consider the family $\FAM =\{g_1, g_2, g''_3, g''_4,
g''_5, g''_6\}$, where $g''_3 = g(-1, \pi - \tau_3, 0)$, $g''_4 =
g(-1/2, -\tau_2, 0)$, $g''_5 = g(1/4, \tau_2, 0)$, and $g''_6 = g(1/3,
\pi + \tau_3, 0)$, with $\tau_2 = \arctan 2 \approx 63.4^{\circ}$ and
$\tau_3 = \arctan 3 \approx 71.6^{\circ}$.  The corresponding
halfspaces are
\begin{align*}
  \VOLV_{g_1}\colon & -u_1 - u_5 \leq 0,\\
  \VOLV_{g_2}\colon & \hphantom{{}-{}} u_1 - u_5 \leq 0,\\
  \VOLV_{g''_3}\colon & \hphantom{{}-{}} 6 u_1 + 2 u_2 - 3 u_3 - u_4 \leq 0,\\ 
  \VOLV_{g''_4}\colon & -6 u_1 - 3 u_2 + 2 u_3 + u_4 \leq 0,\\
  \VOLV_{g''_5}\colon & \hphantom{{}-{}}  6 u_1 - 3 u_2 + 2 u_3 - u_4
  \leq 0, \text{ and}\\
  \VOLV_{g''_6}\colon & -6 u_1 + 2 u_2 - 3 u_3 + u_4 \leq 0
\end{align*}
The first two constraints again ensure $u_5 \geq 0$.
To construct $C\cap T$, we note that
$u_5 = 0$ implies $u_1 = 0$.  Plugging $u_1 = u_5 = 0$ into the
remaining four constraints, we obtain a three-dimensional cone with apex
at the origin that lies in the octant $u_2, u_3, u_4 > 0$.  It follows
that $E = \ah{C \cap T}$ is the 3-space $u_1 = u_5 = 0$.  The
intersection $E \cap \MANI$ consists of the two 2-spaces $u_2 = 0$ and
$u_3 = 0$, both of which intersect~$C$ only in the origin.  It follows
that $\FAM$ pins~$\ell_{0}$, and minimality is verified by checking
that no constraint is redundant.

\paragraph{The unique nondegenerate minimal pinning of higher order.}
Call a pair of constraints~$\{g_1, g_2\}$ \emph{degenerate} if they
are at the same time coplanar and concurrent with~$\ell_{0}$.  This is
equivalent to $\gORTH_1 = \gORTH_2$ or $\{\gORTH_1, \gORTH_2\}$
forming a 2-block. We do not attempt to give a full characterization
of all possible pinning configurations~$\FAM$ where $\FAMO$ does not
pin.  However, we observe that all but one of the examples given above
contained degenerate pairs of constraints. We can show that the
example we gave is in fact unique in this respect.  

\begin{theorem}
  \label{thm:4-pinning}
  Let $\FAM$ be a minimal set of constraints pinning~$\ell_{0}$ not
  containing degenerate pairs and such that $\FAMO$ does not
  pin~$\ell_{0}$.  Then $\FAM$ consists of exactly four constraints
  and~$\FAMO$ is a \ivp-block.
\end{theorem}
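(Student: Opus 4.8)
The plan is to combine the local analysis behind Lemma~\ref{lem:general-eight} with the classification of dependent families (Lemma~\ref{lem:degeneracies}), using the absence of degenerate pairs to discard every configuration except the \ivp-one. Write $C=\bigcap_{g\in\FAM}\VOLV_g$. Since $\FAMO$ does not pin, the set $\phi^{-1}(C\cap T)$ of lines satisfying $\FAMO$ is larger than $\{\ell_0\}$, so $C\cap T\neq\{0\}$; in particular $C$ is not a line transverse to $T$, so the Isolation Lemma puts us in its case~(ii) or~(iii), say~(ii), whence $C\subseteq T^{>}\cup(T\cap\MANI^{>})\cup\{0\}$ and, by Lemma~\ref{lem:homogeneous}(i), $C\cap T\cap\MANI=\{0\}$. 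Put $E=\ah{C\cap T}$; as in the proof of Lemma~\ref{lem:general-eight}, $k:=\dim E$ satisfies $1\le k\le 3$, and by Lemma~\ref{lem:cone-decomposition} (inside $T\cong\R^4$) $E=\bigcap_{g\in H'}(\VOLV_g\cap T)$, where $H'=\{g\in\FAM:\NORM_g\perp E\}$; consequently $\{\NORM_g:g\in H'\}$ surrounds the origin of the $(4-k)$-space $E^{\perp}$.

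The first task is to show $k=1$. If $k=3$, then $E$ is a hyperplane of $T$, each $\VOLV_g\cap T$ with $g\in H'$ is one of the two halfspaces of $T$ bounded by $E$, and two of them must be the opposite ones; their normals are then antiparallel, hence linearly dependent, so the corresponding pair of constraints is degenerate --- a contradiction. If $k=2$, then $|H'|\ge 3$ and any three constraints of $H'$ have normals in the $2$-space $E^{\perp}$, hence are dependent; by Lemma~\ref{lem:degeneracies}, together with the absence of degenerate pairs (and a short connectivity argument on triples), all constraints of $H'$ are simultaneously coplanar with $\ell_0$, in a plane $\Pi$, or simultaneously concurrent with $\ell_0$, at a point $p$. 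A direct computation from the formula for $\NORM_g$ shows that the $2$-space $L_\Pi$ of lines lying in $\Pi$ (respectively the $2$-space $M_p$ of lines through $p$) is annihilated by every $\NORM_g$, $g\in H'$, so it is contained in $E$ and, by equality of dimensions, equals $E$. But every line in $L_\Pi$ or $M_p$ meets $\ell_0$, so $E\subseteq\MANI\cap T$, and then $\{0\}\neq C\cap T\subseteq E\subseteq\MANI$ contradicts $C\cap T\cap\MANI=\{0\}$. Hence $k=1$.

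With $k=1$, $E^{\perp}$ is a $3$-space, so $|H'|\ge 4$ and, by Steinitz's theorem (Theorem~\ref{thm:steinitz}), four constraints $g_1,\dots,g_4\in H'$ have normals forming a critical $3$-simplex. Applying Lemma~\ref{lem:simplices-of-constraints} to the orthogonalized constraints, whose normals are exactly those of $g_1,\dots,g_4$, the family $\{\gORTH_1,\dots,\gORTH_4\}$ is a \ivp-block or a \ivx-block. The \ivx case is impossible: a \ivx-block is satisfied precisely by a $1$-space of lines through a common point of $\ell_0$, a $1$-space contained in $\MANI$, and it contains $C\cap T\neq\{0\}$, again contradicting $C\cap T\cap\MANI=\{0\}$. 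Hence $\{\gORTH_1,\dots,\gORTH_4\}$ is a \ivp-block, and $E$ --- the $1$-space of lines satisfying it --- consists, apart from $\ell_0$, of lines in the ruling of the associated quadric complementary to $\ell_0$'s, hence skew to $\ell_0$; so $E\setminus\{0\}$ is disjoint from $\MANI$ and, by Lemma~\ref{lem:homogeneous}, lies on one side of it, say $E\setminus\{0\}\subseteq\MANI^{>}$.

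It remains to prove $\FAM=\{g_1,\dots,g_4\}$; then $\FAMO=\{\gORTH_1,\dots,\gORTH_4\}$ is the asserted \ivp-block. Let $\sum c_i\NORM_{g_i}=0$ with all $c_i>0$ be the (essentially unique) dependence of the critical simplex. Because every $\FU_{g_i}$ has the same nonlinear part up to the scalar $\delta_i$, the five-dimensional normals satisfy $\sum c_i(\NORM_{g_i},\delta_i)=(0,0,0,0,\sigma)$ with $\sigma=\sum c_i\delta_i$. If $\sigma<0$, then $\{u_5\ge 0\}$ is a valid inequality for $C_0:=\bigcap_{i=1}^{4}\VOLV_{g_i}$, so $C_0\subseteq T^{\ge}$; since $C_0\cap T=E$ and $E\setminus\{0\}\subseteq\MANI^{>}$, the Isolation Lemma shows that $\{g_1,\dots,g_4\}$ already pins $\ell_0$, and minimality forces $\FAM=\{g_1,\dots,g_4\}$. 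The remaining possibility $\sigma\ge 0$ must be excluded: there $\{g_1,\dots,g_4\}$ does not pin (when $\sigma=0$ the $g_i$ lie on a common quadric through $\ell_0$, its complementary ruling giving a one-parameter family of satisfying lines; when $\sigma>0$ one is in the situation symmetric to the above, with $T^{<}$ in place of $T^{>}$), and one argues --- by tracking through Steinitz's theorem which constraints are needed to push $C$ off $T$ and off $\MANI$ inside $E$ --- that any further constraint of $\FAM$ is either redundant or creates a degenerate pair, so that a minimal pinning without degenerate pairs cannot be of this shape. I expect this last step, equivalently the statement that the only nondegenerate way to lift a \ivp-block of orthogonalized constraints to a minimal pinning is by tilting a single constraint out of the quadric while keeping it tangent, to be the main obstacle; the elimination of $k=2,3$ and of the \ivx case is comparatively routine, since each forbidden configuration produces a positive-dimensional family of satisfying lines all meeting $\ell_0$, hence lying in $\MANI$.
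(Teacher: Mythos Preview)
Your elimination of $k=2$ and $k=3$, and of the \ivx-block when $k=1$, is essentially the paper's argument (phrased slightly differently but correctly): each forbidden shape forces $E$ to consist of lines meeting $\ell_0$, hence $E\subseteq\MANI$, contradicting $C\cap T\cap\MANI=\{0\}$. There is however a small omission at $k=1$: Steinitz's theorem does not hand you a critical $3$-simplex. A minimal surrounding set of the origin in a $3$-space without critical segments is either a critical tetrahedron \emph{or} two critical triangles sharing a vertex (Lemma~\ref{lem:surrounding-3d}). In the latter case the orthogonalized constraints form a \iiip-block and a \iiix-block sharing one line, and $E$ is the $1$-space of lines lying in a fixed plane through a fixed point of $\ell_0$; these again all meet $\ell_0$, and you dismiss this case exactly as you did the \ivx-block. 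So this is easily repaired.

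The real gap is the endgame. You fix four constraints $g_1,\dots,g_4$ with normals a critical tetrahedron, compute $\sigma=\sum c_i\delta_i$, and are fine when $\sigma<0$. But when $\sigma\ge 0$ you have no argument, and your proposed route (``any further constraint is redundant or creates a degenerate pair'') is not substantiated and is not obviously true: nothing prevents $\FAM$ from containing, say, five constraints whose orthogonalized versions all lie on the same hyperbolic paraboloid, with the chosen four having $\sigma>0$ while some other four have the right sign. The issue is that you committed to a particular quadruple too early.

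The paper avoids this detour entirely. Instead of singling out four constraints, it first shows that \emph{all} of $H'=\G$ already pins. The key tool is Lemma~\ref{lem:cone-decomposition2}: since the full cone $C$ lies in $T^{\ge}$ (we are in case~(ii) of the Isolation Lemma), the subcone $\bigcap_{g\in\G}\VOLV_g$ also lies in $T^{\ge}$; combined with $\bigcap_{g\in\G}\VOLV_g\cap T=E\subset\MANI^{>}\cup\{0\}$, the Isolation Lemma gives that $\G$ pins, whence $\FAM=\G$ by minimality. Only then does one reduce to four constraints, by a Helly argument in the $4$-space orthogonal to $E$: intersect with $\{u_5\le-1\}$ and extract four halfspaces whose intersection stays in $T^{\ge}$. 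If the resulting four still have a one-dimensional trace on $T$, they pin and minimality finishes; the alternative (trace of dimension $\ge 2$) would require normals in hyperboloidal position to surround the origin in a $1$- or $2$-space without a critical segment, which is impossible. This two-step ``first all of $\G$, then Helly down to four'' is exactly what replaces your $\sigma$-analysis, and it works uniformly without any sign dichotomy.
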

\begin{proof}
  Let $C = \bigcap_{g \in \FAM}\VOLV_g$. Since $\FAMO$ does not pin,
  we have $C \cap T \neq \{0\}$.  Let $E = \ah{C\cap T}$, and define
  $\G = \{\,g \in \FAM \mid E \subset \VOLV_g\,\}$.  By
  Lemma~\ref{lem:cone-decomposition} applied to the cone~$C \cap T$,
  we have $\bigcap_{g \in \G} \VOLV_g \cap T = E$. Since
  $\phi^{-1}(\VOLV_g \cap T) = \VOL_{\gORTH}$, the set $\pE =
  \phi^{-1}(E)$ represents, in our parameterization of $\LINES$ by
  $\R^4$, the set of lines satisfying~$\GO$. In other words, $\GO$ is
  a family of orthogonal constraints such that the set~$\pE$ of lines
  satisfying~$\GO$ is a $k$-dimensional subspace, where $k \in
  \{1,2,3\}$.

  We now argue that~$k=1$ and that~$\pE$ is the set of lines
  satisfying a \ivp-block. Let $N$ denote the set of normals to the
  constraints in~$\GO$, and observe that $N$ surrounds the origin in
  the $(4-k)$-dimensional subspace orthogonal to $\pE$. Since $\FAM$
  contains no degenerate pair, $\GO$ contains no 2-block, and so $N$
  contains no critical segment.  This immediately implies~$k \neq
  3$. If $k = 2$, then $N$ surrounds the origin in a 2-space.  Since
  $N$ contains no critical segment, it must contain a critical
  triangle, and by Lemma~\ref{lem:simplices-of-constraints} $\pE$ is
  the set of lines satisfying a 3-block. $\pE$~is thus either the set
  of lines lying in fixed plane~$\Pi\supset \ell_{0}$, or the set of
  lines through a fixed point~$p \in \ell_{0}$.  In both cases, all
  such lines meet~$\ell_{0}$. However, if $\ell(u)$ meets $\ell_0$
  then $\psi(u) \in T$ and $\psi(u) = \phi(u)$. It follows that $C
  \cap T \subset E \subset \MANI$, and $\FAM$ cannot pin~$\ell_{0}$, a
  contradiction. It follows that $k = 1$, so $\pE$ is one-dimensional
  and $N$ surrounds the origin in the 3-space orthogonal to~$\pE$.
  Since $N$ contains no critical segment, there are two cases by
  Lemma~\ref{lem:surrounding-3d}: First, $\pE$ could be the set of
  lines satisfying two 3-blocks sharing one constraint, which is the
  set of lines lying in a fixed plane and going through a fixed point.
  All such lines meet~$\ell_{0}$, a contradiction.  Second, $\pE$
  could be the set of lines satisfying a 4-block.  If this is a
  \ivx-block, then again all lines in~$\pE$ meet~$\ell_{0}$, a
  contradiction.  We have thus established that $\pE$ is the set of
  lines satisfying a \ivp-block, that is one family of rulings of a
  quadric~$\BOLOID$, while $\GO$ is a subset of the other family of
  rulings of~$\BOLOID$.

  To conclude, we will now show that $\FAM$ has size four. Since
  $\FAM$ pins~$\ell_0$ and $C \cap T \neq \{0\}$, we are in case~(ii)
  or~(iii) of the Isolation Lemma.  Without loss of generality, we
  assume case~(ii), and have $C \subset T^{>}\cup \{ \MANI^{>} \cap
  T\} \cup \{0\}$.  This implies $C \cap T \subset \MANI^{>} \cup
  \{0\}$, and by Lemma~\ref{lem:homogeneous}~(ii) we have $E \subset
  \MANI^{>} \cup \{0\}$. By Lemma~\ref{lem:cone-decomposition2} we
  have $\bigcap_{g \in \G} \VOLV_g \subset T^{\geq}$, and since
  $\bigcap_{g \in \G} \VOLV_g \cap T = E \subset \MANI^{>} \cup
  \{0\}$, this implies $\bigcap_{g \in \G} \VOLV_{g} \subset
  T^{>} \cup \{\MANI^{>} \cap T\} \cup \{0\}$.  By
  Lemma~\ref{lem:isolation}, the family $\G$ pins~$\ell_{0}$, and by
  minimality of $\FAM$ we have $\FAM = \G$, and so the constraints
  in~$\FAMO$ are in hyperboloidal position.

  Let $Y = \{u_{5} \leq -1\}$.  Since $\bigcap_{g \in \FAM} \VOLV_{g}
  \subset T^{\geq}$, we have $\big(\bigcap_{g \in \FAM} \VOLV_{g}\big)
  \cap Y = \emptyset$.  Since $E \subset \partial \VOLV_g$ for~$g \in
  \FAM$ and $E$ is parallel to~$\partial Y$, we can project all
  halfspaces on the 4-space orthogonal to~$E$ and apply Helly's
  theorem there to obtain a five-element subset of $\{\,\VOLV_g\mid g
  \in \FAM\,\} \cup \{Y\}$ with empty intersection. Since $Y$ must be
  one of the five elements, there is a four-element subset $\FAM_1
  \subset \FAM$ with $\bigcap_{g \in \FAM_1} \VOLV_{g} \subset
  T^{\geq}$.  Consider the cone $C_1 = \bigcap_{g \in \FAM_1} \VOLV_g$
  and let $E_1 = \ah{C_1 \cap T}$.  If $E_1$ is one-dimensional, then $E
  = E_1$, and $\FAM_1$ pins~$\ell_{0}$, implying that $\FAM = \FAM_1$.
  Since $\FAMO$ is in hyperboloidal position, it is a \ivp-block. It
  remains to consider the case that $E_1$ is at least two-dimensional.
  Let $\FAM_2 = \{\, g\in \FAM_1 \mid E_1 \subset \VOLV_g\,\}$.  By
  Lemma~\ref{lem:cone-decomposition}, we have $\bigcap_{g\in
    \FAM_2}\VOLV_g \cap T = E_1$.  The normals of the constraints in
  $\FAMO_2$ would have to surround the origin in a 1-space or 2-space,
  but this is impossible for constraints in hyperboloidal position
  without a 2-block.
\end{proof}

\section{Proofs of Theorems~\ref{thm:finite},~\ref{thm:finite-lines},
  and~\ref{thm:infinite}} 
\label{sec:wrapup}

We can now finish the proof of Theorem~\ref{thm:finite-lines},
which states that any minimal pinning of a line by constraints has
size at most eight. The bound reduces to six if no two constraints are
simultaneously concurrent and coplanar with $\ell_0$, that is form a
degenerate pair.

\begin{proof}[Proof of Theorem~\ref{thm:finite-lines}]
 The first statement was proven in Lemma~\ref{lem:general-eight}. For
 the second statement, consider be a minimal pinning $\FAM$ of
 $\ell_0$ by constraints, no two forming a degenerate pair. If $\FAMO$
 does not pin $\ell_0$ then by Lemma~\ref{lem:general-eight} we have
 that $\FAM$ has size at most six. If $\FAMO$ pins~$\ell_{0}$ but is
 not minimal, then some subfamily $\G \subsetneq \FAM$ is such that
 $\GO$ pins~$\ell_{0}$; Lemma~\ref{lem:generic-linearization} yields
 that $\G$ pins~$\ell_{0}$, a contradiction. If $\FAMO$ is a minimal
 pinning of $\ell_0$, since it cannot contain a $2$-block (otherwise
 $\FAM$ would contain a degenerate pair) it must have size at most six
 by Theorem~\ref{thm:char-ortho}.
\end{proof}

We now return to families of convex polytopes that pin a line, and
prove Theorem~\ref{thm:finite}, which asserts that any minimal pinning
of a line by polytopes in $\R^3$ has size at most eight if no facet of
a polytope is coplanar with the line.  The number reduces to six if,
in addition, the polytopes are pairwise disjoint.

\begin{proof}[Proof of Theorem~\ref{thm:finite}]
  Consider a family~$\FAM$ of convex polytopes pinning the
  line~$\ell_{0}$, such that no facet of a polytope is coplanar
  with~$\ell_{0}$.  Then, for each polytope $F \in \FAM$, $\ell_{0}$
  intersects $F$ either in the interior of an edge~$e^F$ or in a
  vertex~$v$.  In the former case, a line $\ell$ in a neighborhood
  of~$\ell_{0}$ intersects~$F$ if and only if it satisfies a
  constraint supporting~$e^{F}$. In the latter case, there are exactly
  two silhouette edges $e^{F}_{1}$ and~$e^{F}_{2}$ incident to~$v$ in
  the direction of~$\ell_{0}$, and a line $\ell$ in a neighborhood of
  $\ell_{0}$ intersects~$F$ if and only if it satisfies the two
  constraints supporting $e^{F}_{1}$ and~$e^{F}_{2}$.  It follows that
  the family of these constraints pins~$\ell_{0}$, and so
  Theorem~\ref{thm:finite-lines} implies that $\ell_{0}$ is already
  pinned by eight of the constraints.  The corresponding at most eight
  polytopes pin~$\ell_{0}$.  If we now make the additional assumption
  that the polytopes are pairwise disjoint, then no two constraints
  can be coplanar and concurrent with $\ell_0$. It then follows from
  the second statement in Theorem~\ref{thm:finite-lines} that six
  constraints suffice to pin the line, and the statement for pairwise
  disjoint polytopes follows.
\end{proof}

Last, we give a construction of arbitrarily large minimal pinnings of
a line by polytopes in $\R^3$, proving Theorem~\ref{thm:infinite}.

\begin{proof}[Proof of Theorem~\ref{thm:infinite}]
  \begin{figure}
    \centerline{\includegraphics{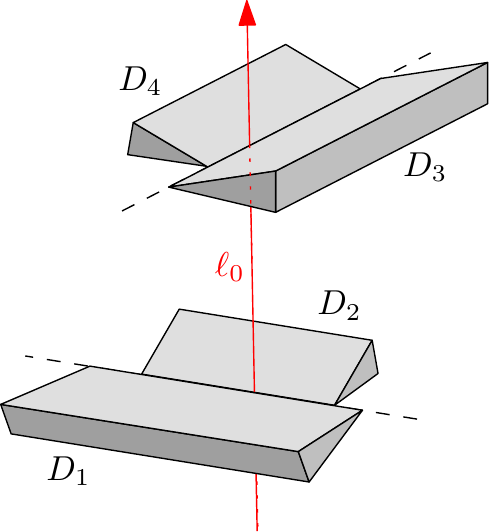}}
    \caption{Four polytopes that restrict transversals to a
      two-dimensional set.}
    \label{fig:lower-bound}
  \end{figure}
  We again identify $\ell_0$ with the $z$-axis. We first pick two
  polytopes $D_1$ and $D_2$ such that their common transversals in the
  vicinity of $\ell_0$ are precisely the lines intersecting the
  $y$-axis. Similarly, we pick two polytopes $D_3$ and $D_4$ that
  restrict the transversals to pass through the line $\{\,(t, 0, 1)\mid
  t \in \R\,\}$, as in Figure~\ref{fig:lower-bound}.  A line $\ell(u)$
  meets all four polytopes if and only if $u_1 = u_4 = 0$.  We can
  therefore analyze the situation in the $u_2u_3$-plane.  We add two
  other polytopes $D_5$ and $D_6$ (not pictured) to enforce $2 u_2
  \geq u_3$ and $2 u_3 \geq u_2$; these polytopes are bounded by the
  oriented lines $\{\,(t,-t,-1)\mid t \in \R\,\}$ and~$\{\,(t,-t,2)\mid t
  \in \R\,\}$.  
  In the $u_2u_3$-plane, the
  set of lines meeting $D_1,\dots,D_6$ is the closed wedge $W =
  \{\,(u_2, u_3) \mid u_2/2 \leq u_3 \leq 2u_2\,\}$.

  Consider two angles, $\beta$ and~$\theta$, with $0 <
  \beta < \theta < \pi/2$.  Let $v=(v_x,v_y)=(\cos \beta, \sin
  \beta)$, $w=(w_x,w_y) = (\cos \theta, \sin \theta)$ be two unit
  vectors, and define the unbounded polyhedral wedge $F(v,w) =
  \{\,(x,y,z)\mid v_x x + v_y y \leq 0 \text{ and } w_x x + w_y y \leq
  0\,\}$.  The left-hand side of Figure~\ref{fig:counter} shows a
  projection along~$\ell_{0}$.  A line $\ell(u)$ with $u_2, u_3 > 0$
  and $u_1 = u_4 = 0$ misses $F(v,w)$ if and only if the vector $(u_2,
  u_3)\in\R^2$ falls in the (closed, counterclockwise) acute angular interval
  $\xi_{v,w}=[\beta,\theta]$ between $v$ and~$w$.  In other words, the
  set of lines intersecting~$F(v,w)$ looks like the gray shape in the
  $u_2u_3$-plane depicted on the right hand side of
  Figure~\ref{fig:counter}, namely the plane with the closed wedge
  corresponding to~$\xi_{v,w}$ removed.

   We pick $n$~pairs of vectors $(v^1,w^1)$, $(v^2,w^2)$, \dots,
   $(v^n,w^n)$, with the property that together the wedges cover~$W$,
   and such that the middle vector $v^i + w^i$ of $\xi_{v^{i}w^{i}}$
   lies in~$W$ but does not lie in $\xi_{v^{j}w^{j}}$ for~$j \neq i$.
   The family $\{F_1, F_2, \dots, F_n\}$ of shapes $F_i=F(v^i,w^i)$
   has the property that $\ell_0$ is the only line in~$W$ intersecting
   the entire family, but for any $1 \leq i \leq n$ there is an entire
   sector of such lines that intersect all~$F_{j}$ with~$j \neq i$.
   It follows that the family $\FAM = \{D_1, \dots, D_6, F_1, \dots,
   F_n\}$ pins $\ell_{0}$, but has no pinning subfamily of size
   smaller than~$n$ (some $D_i$ could be redundant, but none of the
   $F_i$ is).

   In the final step, we crop the polyhedral wedges to create a family
   of bounded polytopes with the same property.  Since pinning is
   determined by lines in a neighborhood of~$\ell_{0}$ only, we can
   clearly make $D_1, \dots, D_6$ bounded.  For each $F_{i}$, select
   the following ``linear'' path $\gamma_{i}(t)$ in $\LINES$ starting
   at the origin: $\gamma_{i}(t)= (u_1(t), u_2(t),u_3(t),u_4(t)) = (0,
   v_y^i+w_y^i,v_x^i+w_x^i, 0)\cdot t$, for $0\leq t\leq 1$. In the
   projection on the $xy$-plane, the line $\gamma_{i}(t)$ is, for any
   $t > 0$, perpendicular to the vector~$v^i+w^i$.  Therefore
   $\gamma_{i}(t)$ does not intersect~$F_i$, but by construction it
   intersects each $F_{j}$ with $j\neq i$. Let $P_{ij}(t)$ be the
   point where $\gamma_{i}(t)$ enters or exits~$F_{j}$. A
   straightforward calculation shows that $P_{ij}(t)$ moves linearly
   away from $\ell_0$ along a line perpendicular to $\ell_0$.

   \begin{figure}[tb]
     \centerline{\includegraphics{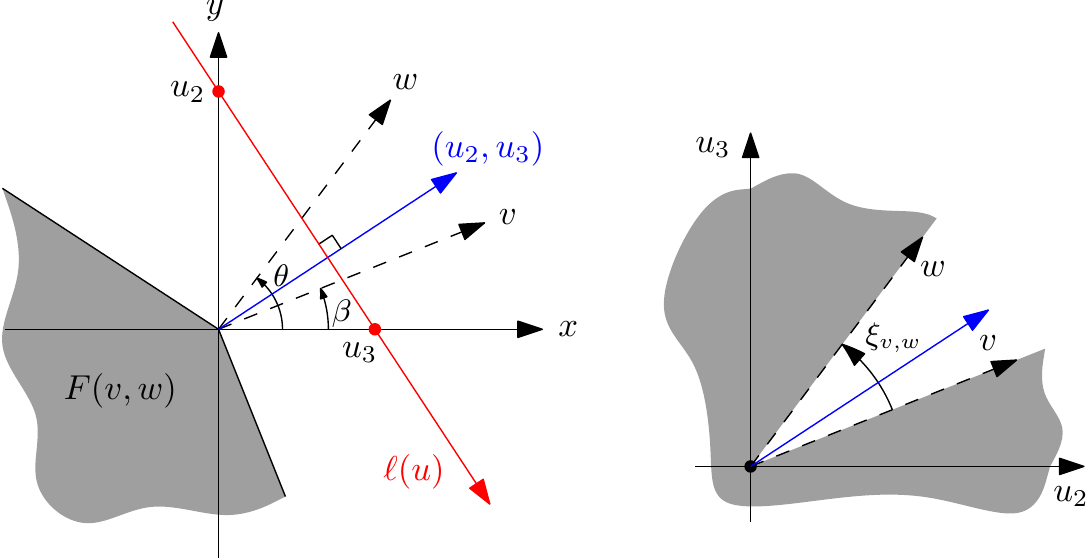}}
     \caption{The polyhedron~$F(v,w)$ and the set of lines
       intersecting it.}
     \label{fig:counter}
   \end{figure}

   We can therefore crop each $F_{j}$ to a bounded polytope, ensuring
   that it still contains all points $P_{ij}(t)$ for $i\neq j$ and
   $0< t\leq 1$, and hence intersects all lines on the paths
   $\gamma_{i}(t)$ for~$i \neq j$.  As a result, the family $\FAM$
   still pins~$\ell_{0}$, but for any $F_{i}$ the family $\FAM
   \setminus\{F_i\}$ is not a pinning as witnessed by the
   path~$\gamma_{i}$.
 \end{proof}

\section{Concluding remarks}

We have shown that minimal pinnings by convex polytopes have bounded
size if the line~$\ell_{0}$ is not coplanar with a facet of a
polytope.  It seems that this condition can be slightly relaxed: if
$\ell_{0}$ intersects a polytope in a vertex~$v$ and lies in the plane
of a facet incident to~$v$, then one can argue separately
in~$T^{\geq}$ and~$T^{\leq}$.  However, we do not know if the result
generalizes any further.

If $\ell_0$ meets the relative interiors of two different edges of a
polytope facet (and is thus coplanar with this facet), then a line
$\ell$ near $\ell_{0}$ intersects the polytope if and only if it
satisfies one of the two line constraints that support the front edge
and the back edge of the facet.  It follows that the set of lines
intersecting the polytope is the union of two ``halfspaces.''  Since
this is not a convex shape, our techniques do not seem to apply.

We have shown that intersecting convex polytopes can have minimal
pinnings of arbitrary size.  We conjecture that \emph{pairwise
disjoint} convex polytopes have bounded pinning number.  In fact, as
mentioned in the introduction, we are not aware of any construction of
a minimal pinning by
\emph{arbitrary} pairwise disjoint compact convex objects of size
larger than six.

Lemma~\ref{lem:general-eight} is a key lemma.  The most difficult case is
when $E$ is three-dimensional, and in fact the proof only works because
$E\cap \MANI$ must consist of two intersecting 2-spaces.  The proof
would not go through if $E\cap\MANI$ were allowed to be, for example,
a circular cone.  Perhaps this is an indication that 
%the lemma does not hold
there might be no corresponding result
 in higher dimensions.

We saw in the introduction that a family of lines pinning a line can
be considered as a grasp of that line.  In grasping, one often
considers \emph{form closure}, which means that the object is
immobilized even with respect to infinitesimally small movements.  For
instance, an equilateral triangle with a point finger at the midpoint
of every edge is immobilized, as it cannot be moved in any way, but it
is not in form closure because an infinitesimal rotation around its
center is possible.  It is easy to see that all grasps listed in
Theorem~\ref{thm:char-ortho} are form closure grasps in this sense.
The grasp caused by a 4-pinning, however, is not a form-closure grasp,
as $\ell_{0}$ can be moved infinitesimally in the quadric defined by
three of the lines.

\section*{Acknowledgments}

Part of this work was done during the \emph{Kyoto RIMS Workshop on
  Computational Geometry and Discrete Mathematics 2008}, the
\emph{Oberwolfach Seminar on Discrete Geometry 2008} and the
\emph{BIRS Workshop on Transversal and Helly-type Theorems in
  Geometry, Combinatorics and Topology 2009}. We thank the Research
Institute for Mathematical Sciences of Kyoto University, the
Mathematisches Forschunginstitut Oberwolfach and the Banff
International Research Station for their hospitality and support. We
also thank Andreas Holmsen and Sang Won Bae for helpful discussions.

\bibliographystyle{plain}
\bibliography{polypin}

\begin{thebibliography}{10}

\bibitem{a-httgl-94}
N.~Amenta.
\newblock Helly-type theorems and generalized linear programming.
\newblock {\em Discrete \& Computational Geometry}, 12:241--261, 1994.

\bibitem{bgp-ltdb-08}
C.~Borcea, X.~Goaoc, and S.~Petitjean.
\newblock Line transversals to disjoint balls.
\newblock {\em Discrete \& Computational Geometry}, 39:158--173, 2008.

\bibitem{cghp-hhtdus-08}
O.~Cheong, X.~Goaoc, A.~Holmsen, and S.~Petitjean.
\newblock Hadwiger and {H}elly-type theorems for disjoint unit spheres.
\newblock {\em Discrete \& Computational Geometry}, 39:194--212, 2008.

\bibitem{dgk-htr-63}
L.~Danzer, B.~Gr{\"u}nbaum, and V.~Klee.
\newblock Helly's theorem and its relatives.
\newblock In V.~Klee, editor, {\em Convexity}, Proc. of Symposia in Pure Math.,
  pages 101--180. Amer. Math. Soc., 1963.

\bibitem{e-hrctt-93}
J.~Eckhoff.
\newblock Helly, {Radon} and {Carath\'eodory} type theorems.
\newblock In P.~M. Gruber and J.~M. Wills, editors, {\em Handbook of Convex
  Geometry}, pages 389--448. North-Holland, 1993.

\bibitem{gpw-gtt-93}
J.~E. Goodman, R.~Pollack, and R.~Wenger.
\newblock Geometric transversal theory.
\newblock In J.~Pach, editor, {\em New Trends in Discrete and Computational
  Geometry}, volume~10 of {\em Algorithms and Combinatorics}, pages 163--198.
  Springer-Verlag, Heidelberg, Germany, 1993.

\bibitem{h-uegt-57}
H.~Hadwiger.
\newblock {\"U}ber {Eibereiche} mit gemeinsamer {Treffgeraden}.
\newblock {\em Purtugaliae mathematica}, 6:23--29, 1957.

\bibitem{h-rpltfto-08}
A.~Holmsen.
\newblock Recent progress on line transversals to families of translated ovals.
\newblock In J.~E. Goodman, J.~Pach, and R.~Pollack, editors, {\em Surveys on
  Discrete and Computational Geometry: Twenty Years Later}, volume 453 of {\em
  Contemporary Mathematics}, pages 283--298. American Mathematical Society,
  2008.

\bibitem{hm-nhtst-04}
A.~Holmsen and J.~Matou{\v s}ek.
\newblock No {Helly} theorem for stabbing translates by lines in~{$\R^d$}.
\newblock {\em Discrete \& Computational Geometry}, 31:405--410, 2004.

\bibitem{mnp-gg-90}
X.~Markenscoff, L.~Ni, and C.~H. Papadimitriou.
\newblock The geometry of grasping.
\newblock {\em Internat. J. Robot. Res.}, 9:61--74, 1990.

\bibitem{m-mrm-01}
M.~T. Mason.
\newblock {\em Mechanics of Robotic Manipulation}.
\newblock MIT Press, Cambridge, USA, 2001.

\bibitem{MishraSS87}
B.~Mishra, J.~T. Schwartz, and M.~Sharir.
\newblock On the existence and synthesis of multifinger positive grips.
\newblock {\em Algorithmica}, 2:541--558, 1987.

\bibitem{pottmann-wallner}
H.~Pottmann and J.~Wallner.
\newblock {\em Computational Line Geometry}.
\newblock Springer-Verlag, 2001.

\bibitem{robinson-1942}
C.~V. Robinson.
\newblock Spherical theorems of {Helly} type and congruence indices of
  spherical caps.
\newblock {\em American Journal of Mathematics}, 64:260--272, 1942.

\bibitem{w-httgt-04}
R.~Wenger.
\newblock Helly-type theorems and geometric transversals.
\newblock In Jacob~E. Goodman and Joseph O'Rourke, editors, {\em Handbook of
  Discrete \& Computational Geometry}, chapter~4, pages 73--96. CRC Press LLC,
  Boca Raton, FL, 2nd edition, 2004.

\end{thebibliography}

\end{document}

% LocalWords:  KAIST INRIA Otfried Cheong Daejeon Goaoc transversals polytope
% LocalWords:  Hadwiger Danzer Helly Tverberg Eckhoff Wenger Holmsen Borcea
% LocalWords:  Tverberg's coplanarity Mishra Minkowski halfspace Steinitz's
% LocalWords:  halfspaces Steinitz quadrics halfplane simplices affinely BIRS
% LocalWords:  nondegenerate tetrahedra Helly's orthant polyhedra Oberwolfach
% LocalWords:  Banff